\newcommand{\leqnomode}{\tagsleft@true}
\newcommand{\reqnomode}{\tagsleft@false}
\newcommand{\R}{\mathbb{R}}
\newcommand{\defin}{\mathrel{\mathop :}=}
\theoremstyle{plain}
\newtheorem{theorem}{Theorem}[section]
\newtheorem{lemma}[theorem]{Lemma}
\newtheorem{corollary}[theorem]{Corollary}
\newtheorem{proposition}[theorem]{Proposition}
\newtheorem{defn}[theorem]{Definition}
\theoremstyle{remark}
\newtheorem{remark}{Remark}
\providecommand{\customgenericname}{}
\newcommand{\newcustomtheorem}[2]{%
  \newenvironment{#1}[1]
  {%
   \renewcommand\customgenericname{#2}%
   \renewcommand\theinnercustomgeneric{##1}%
   \innercustomgeneric
  }
  {\endinnercustomgeneric}
}
\theoremstyle{plain}
\title{Generalized Ellipsoids}
\author{Amir Ali Ahmadi
\thanks{Princeton University, Operations Research and Financial Engineering. AAA was partially supported by the MURI award of the AFOSR, the Sloan Fellowship, the Princeton AI Lab Seed Grant, and the Princeton SEAS Innovation Grant. Email: {\tt\small aaa@princeton.edu}}
\ \ \ \ \ \ \ \ \
Abraar Chaudhry \thanks{Georgia Institute of Technology, H. Milton Stewart School of Industrial and Systems Engineering. AC was partially supported by the MURI award of the AFOSR. Email: {\tt\small achaudhry61@gatech.edu}}
\ \ \ \ \ \ \ \ \
Cemil Dibek\thanks{Ko\c{c} University, Turkey, Department of Industrial Engineering. Email: {\tt \small cdibek@ku.edu.tr}}
}
\date{}
\begin{document}
\maketitle

\begin{abstract}
We introduce a family of symmetric convex bodies called \emph{generalized ellipsoids of degree $d$} (GE\nobreakdash-$d$s), with ellipsoids corresponding to the case of $d=0$. Generalized ellipsoids (GEs) retain many geometric, algebraic, and algorithmic properties of ellipsoids. We show that the conditions that the parameters of a GE must satisfy can be checked in strongly polynomial time, and that one can search for GEs of a given degree by solving a semidefinite program whose size grows only linearly with dimension. We give an example of a GE which does not have a second-order cone representation, but show that every GE has a semidefinite representation whose size depends linearly on both its dimension and degree. In terms of expressiveness, we prove that for any integer $m\geq 2$, every symmetric full-dimensional polytope with $2m$ facets and every intersection of $m$ co-centered ellipsoids can be represented exactly as a GE-$d$ with $d \leq 2m-3$. Using this result, we show that every symmetric convex body can be approximated arbitrarily well by a GE-$d$ and we quantify the quality of the approximation as a function of the degree $d$. Finally, we present applications of GEs to several areas, such as time-varying portfolio optimization, stability analysis of switched linear systems, robust-to-dynamics optimization, and robust polynomial regression.
\\

\noindent \textit{\textbf{Keywords:} ellipsoids, convex bodies, conic optimization, 
semidefinite representations, polynomial matrices.
}
\end{abstract}

\reqnomode
\section{Introduction} \label{sec:intro}

An \emph{ellipsoid} in Euclidean space $\R^n$ is a set of the type
\begin{equation}\label{eq:ellipsoid_defn}
\hspace{4.3cm}
\mathcal{E} = \{x \in \R^n \: | \:  (x-x_0)^TP(x-x_0) \leq 1\},
\end{equation}
where $P$ is a (symmetric) positive definite matrix and $x_0\in\mathbb{R}^n$ is a given vector. Ellipsoids are among the most prominent examples of convex sets in applied and computational mathematics. In optimization, they represent sublevel sets of objective functions of convex quadratic programs, feature in the description of celebrated algorithms such as the ellipsoid method and Dikin's method, and serve as primary examples of uncertainty sets in robust optimization. In control and robotics, they appear as sublevel sets of quadratic Lyapunov functions or in the description of the manipulability set of a robotic system. In convex geometry, they are used to approximate convex bodies with an approximation guarantee established by John's ellipsoid theorem. In probability and statistics, they appear as confidence regions for Gaussian or more generally elliptical distributions. 
These examples are a small sample among many. We refer the reader to \cite[Chap. 1]{Todd} for some reasons why ellipsoids are so ubiquitous in many areas.



Since ellipsoids are sublevel sets of strictly convex quadratic polynomials, a very natural generalization of ellipsoids would be to consider sublevel sets of strictly convex polynomials of degree higher than two. However, unless P=NP, the set of convex (or strictly convex) polynomials of degree at least four does not admit a tractable description~\cite{AOPT}. This implies that algorithms based on this approach would in general not scale well with increasing dimension.





In this work, we propose a different generalization of ellipsoids to sets that we call \emph{generalized ellipsoids of degree $d$} (GE-$d$s), with ellipsoids corresponding to the case of $d=0$.
Generalized ellipsoids (GEs) retain some key geometric and algebraic properties of ellipsoids, such as the properties of being convex and semialgebraic. Importantly, they are also algorithmically tractable to search for and optimize over. The reason for this tractability stems from the fact that our generalization (see Definition~\ref{def:GE_d_defn}) keeps the defining inequalities of a GE \emph{quadratic} in $x\in\mathbb{R}^n$, while adding a \emph{single} new variable on which these inequalities depend polynomially. Despite the univariate nature of this dependence, we show that GEs can approximate any $n$-dimensional symmetric convex body to arbitrary accuracy.

\subsection{Organization and main contributions}

The remainder of this paper is organized as follows. In Section~\ref{sec:GE_definition}, we give the definition of generalized ellipsoids, justify our definition, and provide some examples.

In Section~\ref{sec:recog_opt}, we focus on recognition of GEs and search for GEs. In Section~\ref{subsec:efficient_recog}, we show that the conditions that the parameters of a GE must satisfy can be checked in strongly polynomial time. In particular, we show that one can check if a univariate polynomial matrix is positive semidefinite over an interval (or the real line) in strongly polynomial time.
This result may be of independent interest.
In Section~\ref{subsec:efficient_search}, using the fact that certain low-degree sum of squares tests for positive semidefiniteness of polynomial matrices are exact, we show that one can search for GEs of a given degree by solving a single semidefinite program. In fact, the size of this semidefinite program grows only \emph{linearly} with dimension.



In Section~\ref{sec:conic_rep}, we investigate whether GEs can be represented as the feasible set of three increasingly expressive families of tractable conic programs. This is relevant to applications involving optimization over a GE. In Section~\ref{subsec:intersection}, we show that when $d \geq 2$, GE-$d$s cannot always be described by finitely many convex quadratic constraints. In Section~\ref{subsec:socp_rep}, we show that when $d \geq 16$, GE-$d$s do not always have a second-order cone representation. In Section~\ref{subsec:sdp_representable}, we show that every GE has a semidefinite representation whose size depends linearly on both its dimension and degree.

In Section~\ref{sec:rep_power}, we focus on the expressive power of GEs. We show that for any integer $m\geq 2$, every compact intersection of $m$ ``semiellipsoids'', and in particular every symmetric full-dimensional polytope with $2m$ facets and every intersection of $m$ co-centered ellipsoids, can be represented exactly as a GE-$d$ with $d \leq 2m-3$. 
A technical lemma that goes into this proof shows that for any dimension $m \geq 2$, there is a polynomial curve of degree $2m-3$ that lies within the unit simplex in $\R^m$ and visits every one of its corners.
We believe this statement and our game-theoretic proof of it may be of independent interest.
We then show that every symmetric convex body can be approximated arbitrarily well by a GE-$d$ and we quantify the quality of the approximation as a function of the degree~$d$.
In Section~\ref{subsec:GEPloarity}, we discuss how GEs can behave under polar duality.

In Section~\ref{sec:applications}, we present four applications involving GEs and provide some numerical examples. In Section~\ref{subsec:markowitz}, we consider a time-varying extension of the minimum-variance portfolio optimization problem in finance. 
In Section~\ref{subsec:jsr}, we consider an application in dynamical systems and show that asymptotically stable switched linear systems always admit a GE as an invariant set. 
In Section~\ref{subsec:RDO}, we show how GEs can provide inner approximations to feasible sets of robust-to-dynamics optimization problems when the dynamical system is uncertain.
In Section~\ref{subsec:shift regression}, we present an application of GEs to the problem of polynomial regression in statistics when there is uncertainty in the measurements.

Finally, in Section~\ref{sec:future_directions}, we list a few questions for future research.


\section{Definition of GEs} \label{sec:GE_definition}

We begin by establishing some notation and terminology. We denote the set of real symmetric $n \times n$ matrices by $S^n$. We write $M \succeq 0$ if $M \in S^n$ is positive semidefinite (psd) and $M \succ 0$ if $M$ is positive definite (pd). We denote the set of $n \times n$ psd (resp. pd) matrices by $S_+^n$ (resp. $S_{++}^n$). The kernel of $M$ is denoted by $\text{Ker}(M)$. We refer to a matrix with polynomial entries as a \emph{polynomial matrix}. 
We can now give the definition of our generalization of ellipsoids.

\begin{defn}\label{def:GE_d_defn}
A set $\mathcal{E}_d \subset \R^n$ is a \emph{generalized ellipsoid of degree $d$} (GE-$d$) if it can be written as 
\begin{equation}\label{eq:GE_d_defn}
\hspace{3cm}
\mathcal{E}_d = \{x \in \R^n \: | \:  (x-x_0)^TP(t)(x-x_0) \leq 1 \:\: \forall t \in [-1,1]\}
\end{equation}
for some vector $x_0 \in \R^n$ and some univariate polynomial matrix $P(t)$ of degree (at most) $d$ that satisfies
\begin{itemize}
\item $P(t) \succeq 0 \quad \forall t \in [-1,1]$, and 
\item $\bigcap\limits_{t \in [-1,1]} \text{Ker}(P(t)) = \{0\}$.
\end{itemize}
We refer to these two conditions as the ``psd condition'' and the ``kernel condition'', respectively. We say that a set is a \emph{generalized ellipsoid} (GE) if it is a GE-$d$ for some nonnegative integer $d$.
\end{defn}
Observe that ellipsoids correspond precisely to GE-$0$s. Indeed, when $d=0$, $P(t)$ is a constant matrix, say $P(t) = P$, and we have $P \succ 0$ if and only if $P \succeq 0$ and $\text{Ker}(P) = \{0\}$. It is straightforward to check that GEs are symmetric convex bodies (i.e., compact convex sets with non-empty interior that are symmetric around their center). Throughout this paper, without loss of generality, we assume that the center $x_0$ of our GEs is at the origin. Figure~\ref{fig:GE_examples} demonstrates a few examples of GEs, together with the corresponding polynomial matrices $P(t)$.


\begin{figure}[ht]
\centering
\begin{subfigure}{.34\textwidth}
  \centering
  \includegraphics[width=.6\linewidth]{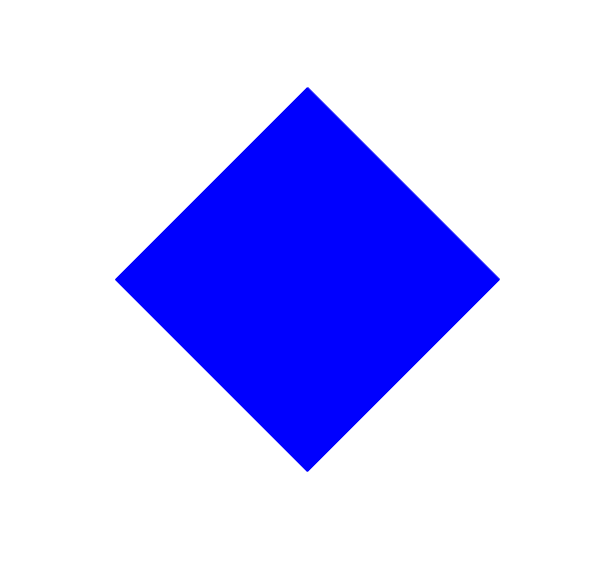}
  \vspace{-0.4cm}
  \caption{$P(t) = \begin{bmatrix} 1 & t \\ t & 1 \end{bmatrix}$}
  \label{fig:sub1}
\end{subfigure}%
\begin{subfigure}{.34\textwidth}
  \centering
  \includegraphics[width=.6\linewidth]{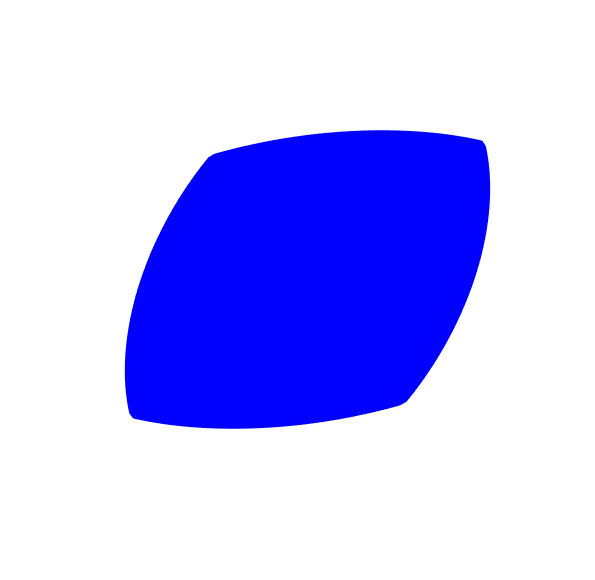}
  \vspace{-0.4cm}
  \caption{$P(t) = \begin{bmatrix} 4t+8 & -3t-1 \\ -3t-1 & -4t+12 \end{bmatrix}$}
  \label{fig:sub2}
\end{subfigure}%
\begin{subfigure}{.34\textwidth}
  \centering
  \includegraphics[width=.6\linewidth]{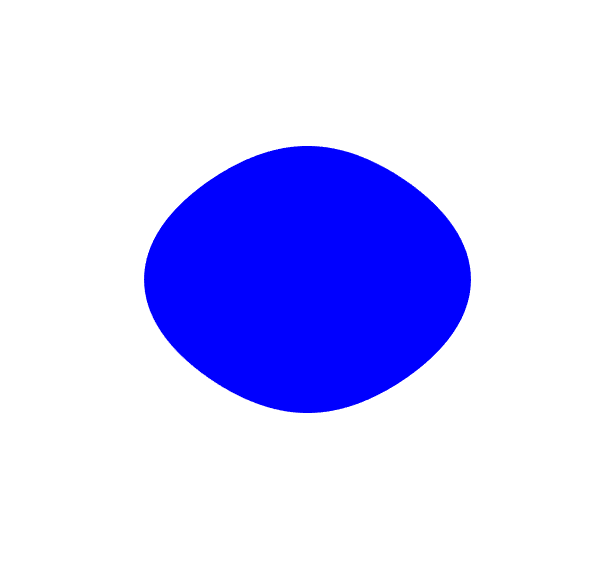}
  \vspace{-0.4cm}
  \caption{$P(t) = \begin{bmatrix} 2-t^2 & t \\ t & 3-t^2 \end{bmatrix}$}
  \label{fig:sub3}
\end{subfigure}
\begin{subfigure}{.45\textwidth}
  \centering
  \vspace{0.8cm}
  \includegraphics[width=.55\linewidth]{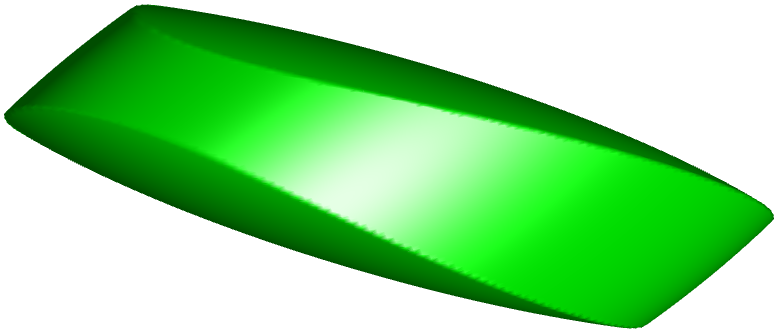}
  \vspace{0.4cm}
  \hspace{-0.8cm}
  \caption{$P(t) = \begin{bmatrix} 6-4t & 2-3t & 1-t \\ 2-3t & 3-t & -t \\ 1-t & -t & 2 \end{bmatrix}$}
  \label{fig:sub4}
\end{subfigure}
\begin{subfigure}{.45\textwidth}
  \centering
\vspace{0.8cm}
  \includegraphics[width=.5\linewidth]{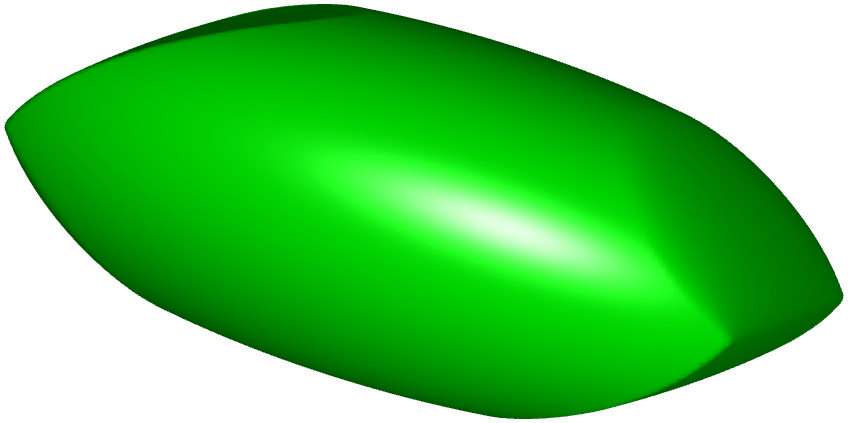}
  \vspace{0.4cm}
  \hspace{-1.5cm}
  \caption{$P(t) = 
  \begin{bmatrix} 
  4-t^2 & t+1 & 2-t^2 \\ 
   t+1 & 2t^2 + t + 2 & t^2-t+1 \\ 
  2-t^2 & t^2-t+1 & 2t^2+2t+4 
  \end{bmatrix}$}
  \label{fig:sub5}
\end{subfigure}
\caption{Some examples of GEs with the corresponding $P(t)$ matrices.}
\label{fig:GE_examples}
\end{figure}

Note that a GE-$d$ has a compact representation in terms of the coefficients of the polynomial matrix $P(t)$. The reason that the matrix $P(t)$ in the definition of GEs depends on a single variable $t$, and that this dependence is polynomial, is justified by algorithmic tractability purposes (see Section~\ref{sec:recog_opt} and Section~\ref{sec:conic_rep}). Yet, this family of sets is quite expressive (see Section~\ref{sec:rep_power}).
The domain for the variable $t$ in Definition~\ref{def:GE_d_defn} is chosen to be bounded since over an infinite interval, any non-constant polynomial matrix would go to infinity in norm, which would preclude the set $\mathcal{E}_d$ from having full dimension.
Among bounded intervals, the choice of $[-1,1]$ is without loss of generality since any other bounded interval can be transformed into $[-1,1]$ via shifting and scaling.

Now, let us just justify the kernel condition. When generalizing ellipsoids, one may find it more natural to consider positive definiteness of $P(t)$ for all $t \in [-1,1]$, or for some $t \in [-1,1]$. Suppose $P(t) \succeq 0 \:\: \forall t \in [-1,1]$ and consider the following three candidate conditions:

\vspace{0.2cm}
\noindent (i) $P(t) \succ 0 \:\: \forall t \in [-1,1]$,
\hspace{1cm}
(ii) $P(t) \succ 0$ for some $t \in [-1,1]$,
\hspace{1cm}
(iii) $\bigcap\limits_{t \in [-1,1]} \textup{Ker}(P(t)) = \{0\}$.

\noindent Clearly,  (i) $\Rightarrow$ (ii) $\Rightarrow$ (iii). However, (ii) $\not \Rightarrow$ (i), as seen, e.g., by
$P(t) =
\begin{bmatrix}
     1-t & 0 \\
     0 & 1+t
\end{bmatrix}$. Similarly, (iii)~$\not \Rightarrow$~(ii); as seen, e.g., by 
$$P(t) =
\begin{bmatrix}
     (1+t)^2 & (1-t)(1+t) \\
     (1-t)(1+t) & (1-t)^2
\end{bmatrix}.$$
Indeed, we have $\textup{Ker}(P(1)) \cap \textup{Ker}(P(-1)) = \{0\}$, but $P(t)$ is not pd for any $t \in [-1,1]$ as $\det(P(t))$ is identically zero. Hence, our choice of the kernel condition leads to a more inclusive definition.

We also note that a set of the type~\eqref{eq:GE_d_defn} with $P(t)\succeq 0$ $\forall t\in [-1,1]$ is a convex body if and only if the kernel condition is satisfied. We restate this claim in a lemma below in terms of norms that convex bodies define. Recall that any ellipsoid $\mathcal{E}$ as in~\eqref{eq:ellipsoid_defn} defines an ellipsoid (quadratic) norm by
$$||x||_{\mathcal{E}} = \sqrt{x^TPx}.$$
Similarly, for any generalized ellipsoid $\mathcal{E}_d$ as in~\eqref{eq:GE_d_defn}, we can define a \emph{generalized ellipsoid norm of degree $d$} (GE-$d$-norm) by
\begin{equation}\label{eq:GE_d_norm}
\hspace{5.4cm}
||x||_{\mathcal{E}_d} = \max\limits_{t \in [-1,1]} \sqrt{x^TP(t)x}.
\end{equation}
The proof of the following lemma is straightforward and hence omitted.

\begin{lemma}
\label{lem:norm_iff}
A function $f: \R^n \rightarrow \R$ of the type $f(x) = \max\limits_{t \in [-1,1]} \sqrt{x^TP(t)x}$ with $P(t) \succeq 0$ $\forall t \in [-1,1]$ is a norm if and only if $\bigcap_{t \in [-1,1]} \textup{Ker}(P(t)) = \{0\}$.
\end{lemma}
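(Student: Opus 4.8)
The plan is to show that $f$ is always a seminorm---independently of the kernel condition---and that the kernel condition is precisely what promotes this seminorm to a genuine norm, i.e.\ what forces $f$ to separate points. So the proof splits into two essentially independent observations, which are then combined.

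First I would verify that $f$ is well defined and is a seminorm. Since $[-1,1]$ is compact and $t \mapsto x^TP(t)x$ is a polynomial (hence continuous) that is nonnegative by the psd condition, the maximum in the definition of $f$ is attained and $f(x) \ge 0$ for every $x$. Absolute homogeneity is immediate: $f(\lambda x) = \max_{t \in [-1,1]} |\lambda|\sqrt{x^TP(t)x} = |\lambda|\,f(x)$. For subadditivity, note that for each fixed $t \in [-1,1]$ the map $x \mapsto \sqrt{x^TP(t)x}$ is a seminorm because $P(t) \succeq 0$; hence $f(x+y) = \max_t \sqrt{(x+y)^TP(t)(x+y)} \le \max_t\bigl(\sqrt{x^TP(t)x} + \sqrt{y^TP(t)y}\bigr) \le \max_t \sqrt{x^TP(t)x} + \max_s \sqrt{y^TP(s)y} = f(x) + f(y)$. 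None of this uses the kernel condition.

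Next I would identify the zero set of $f$. Because each term $x^TP(t)x$ is nonnegative, $f(x) = 0$ if and only if $x^TP(t)x = 0$ for all $t \in [-1,1]$. Factoring $P(t) = L(t)L(t)^T$ (possible since $P(t)\succeq 0$), we get $x^TP(t)x = \|L(t)^Tx\|^2$, so $x^TP(t)x = 0$ is equivalent to $L(t)^Tx = 0$, which is equivalent to $P(t)x = L(t)L(t)^Tx = 0$, i.e.\ $x \in \textup{Ker}(P(t))$. Therefore $f(x) = 0$ if and only if $x \in \bigcap_{t \in [-1,1]} \textup{Ker}(P(t))$.

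Combining the two steps: a seminorm is a norm exactly when it separates points, i.e.\ when its zero set is $\{0\}$, which by the previous paragraph happens if and only if $\bigcap_{t \in [-1,1]} \textup{Ker}(P(t)) = \{0\}$; since $0$ always lies in every kernel, this is exactly the kernel condition. I do not anticipate any real obstacle; the only point that requires a moment's care is the implication $x^TP(t)x = 0 \Rightarrow P(t)x = 0$, which is where positive semidefiniteness of $P(t)$ (and not merely nonnegativity of the quadratic form on a subset of directions) genuinely enters.
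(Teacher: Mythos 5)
Your proof is correct, and since the paper explicitly omits the proof of this lemma as ``straightforward,'' your argument is precisely the intended one: $f$ is always a seminorm, and its zero set is $\bigcap_{t\in[-1,1]}\textup{Ker}(P(t))$ via the standard fact that $x^TP(t)x=0$ forces $P(t)x=0$ when $P(t)\succeq 0$. No gaps.
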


\section{Recognition of GEs and Search for GEs} \label{sec:recog_opt}

\subsection{Efficient recognition of GEs}\label{subsec:efficient_recog}

In this section, we show that GEs can be recognized in strongly polynomial time\footnote{We recall that a \emph{strongly polynomial time algorithm} is an algorithm such that (i) it consists of the elementary arithmetic operations:
addition, subtraction, comparison, multiplication, and division, (ii) the number of elementary operations depends polynomially on the dimension of the input to the algorithm, and (iii) the encoding length of the numbers occurring during the algorithm is bounded by a polynomial function of the encoding length of the input. We also recall that when we speak of polynomial time or strongly polynomial time algorithms, we are working in the Turing model of computation where the input to the problem consists of rational numbers and hence has finite encoding length. Here, the encoding length could for example be taken as the number of bits required in a binary representation of the input. See \cite{Grotschel1988} for more details. In our case, the input to the problem is the rational coefficients of the entries of $P(t)$ and the dimension of the input is the total number of coefficients which is $\left(\substack{n+1 \\ 2}\right) (d+1)$.}; i.e, given the coefficients of the entries of a univariate polynomial matrix $P(t)$, we can check both the kernel condition and the psd condition of Definition~\ref{def:GE_d_defn} in strongly polynomial time. In particular, the running time of the recognition procedure is polynomial in both the dimension $n$ and the degree $d$ of the GE-$d$, and its number of arithmetic operations does not depend on the encoding length of the coefficients of $P(t)$.

\begin{lemma}
\label{lem:kernel_cond}
Given a univariate polynomial matrix $P(t)$, one can check in strongly polynomial time whether $\bigcap_{t \in [-1,1]} \textup{Ker}(P(t)) = \{0\}$.
\end{lemma}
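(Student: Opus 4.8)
The key observation is that $\bigcap_{t \in [-1,1]} \text{Ker}(P(t)) = \{0\}$ is equivalent to saying that no nonzero vector $v$ satisfies $P(t)v = 0$ for all $t \in [-1,1]$. Since each entry of $P(t)v$ is a polynomial in $t$, a vector $v$ lies in the common kernel over $[-1,1]$ (which is an infinite set) if and only if each such polynomial vanishes identically, i.e., all of its coefficients vanish. Writing $P(t) = \sum_{k=0}^d P_k t^k$ with $P_k \in S^n$, the condition $P(t)v = 0$ for all $t \in [-1,1]$ is therefore equivalent to $P_k v = 0$ for every $k \in \{0,\dots,d\}$. Hence $\bigcap_{t \in [-1,1]} \text{Ker}(P(t)) = \bigcap_{k=0}^d \text{Ker}(P_k) = \text{Ker}\big(\sum_{k=0}^d P_k^2\big)$, and the kernel condition holds if and only if the constant psd matrix $M \defin \sum_{k=0}^d P_k^2$ is positive definite (equivalently, nonsingular).

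So the plan is: (1) form the coefficient matrices $P_0,\dots,P_d$ (already given as input); (2) form $M = \sum_{k=0}^d P_k^2$, which costs $O(d)$ matrix multiplications and additions of $n \times n$ matrices, i.e., $O(dn^3)$ arithmetic operations; (3) decide whether $M$ is nonsingular. The last step can be done in strongly polynomial time using Gaussian elimination — indeed, it is classical (see, e.g., \cite{Grotschel1988}) that one can compute the rank or determinant of a rational matrix, or test nonsingularity, using a number of arithmetic operations polynomial in $n$ and with all intermediate numbers of encoding length polynomial in the input encoding length (the Edmonds bound on the bit-sizes of the entries arising in Gaussian elimination). Since steps (1)–(2) are plainly strongly polynomial and the input size is $\binom{n+1}{2}(d+1)$, the overall procedure runs in strongly polynomial time.

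The only subtlety — and the one point that needs care rather than being routine — is verifying the equivalence in the first paragraph, namely that taking the intersection over the continuum $[-1,1]$ collapses to the finite intersection $\bigcap_k \text{Ker}(P_k)$; this uses only that a univariate polynomial vanishing on an interval (indeed on any infinite set) is the zero polynomial. The reduction to a determinant/rank computation then makes the strong-polynomiality immediate from standard linear algebra, so I do not anticipate a genuine obstacle here; the content of the lemma is really this clean algebraic reformulation.
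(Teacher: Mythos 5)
Your proposal is correct and takes essentially the same route as the paper: both reduce the kernel condition to the common kernel of the coefficient matrices $P_0,\dots,P_d$ (using that a univariate polynomial vanishing on $[-1,1]$ must be identically zero) and then invoke Edmonds' strongly polynomial Gaussian elimination. The only immaterial difference is in the final linear-algebra step --- the paper checks whether the stacked $n(d+1)\times n$ matrix with blocks $P_0,\dots,P_d$ has rank $n$, whereas you test nonsingularity of $\sum_k P_k^2$; both are valid and strongly polynomial.
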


\begin{proof}
The kernel condition is equivalent to checking whether there is a vector $x \neq 0$ such that $P(t)x = 0$ $\forall t \in [-1,1]$. Let $P(t) = P_0 + P_1 t + P_2 t^2 + \dots + P_d t^d$ where $P_i \in S^n$ for $i=0,1,\dots,d$.  Since a univariate polynomial vanishes on $[-1,1]$ if and only if all of its coefficients are zero, the kernel condition is also equivalent to the existence of a vector $x \neq 0$ such that $P_0x = 0$, $P_1x = 0$, $\dots$, $P_dx = 0$. These equations can be collectively expressed as a single linear system $Ax = 0$, where
$$
A = \begin{bmatrix}
P_0 \\
P_1 \\
\vdots \\
P_d
\end{bmatrix} \in \mathbb{R}^{n(d+1) \times n}
$$
is the vertical concatenation of the matrices $P_0,P_1,\dots,P_d$. To check existence of a nonzero solution to the system $Ax=0$, we can equivalently check whether the rank of $A$ is less than $n$, which can be done in strongly polynomial time, e.g., by Edmonds' implementation of Gaussian elimination (see, e.g., \cite[Corollary~1.4.9.b]{Grotschel1988}).
\end{proof}

Next, we show that checking the psd condition in Definition~\ref{def:GE_d_defn} can be done in strongly polynomial time. The following theorem, and the lemma it relies on, may be of independent interest.

\begin{theorem}
\label{thm:psd strongly poly}
Given a univariate polynomial matrix $P(t)$, one can check in strongly polynomial time whether $P(t) \succeq 0$ $\forall t \in [-1,1]$.
\end{theorem}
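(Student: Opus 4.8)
The plan is to reduce the matrix question to finitely many \emph{scalar} polynomial‐nonnegativity checks on $[-1,1]$, and then to dispatch each scalar check by a strongly polynomial subroutine. The starting point is the elementary fact that a matrix $M\in S^n$ is psd if and only if every coefficient of the polynomial $\det(\lambda I+M)=\lambda^n+c_{n-1}(M)\lambda^{n-1}+\cdots+c_0(M)$ is nonnegative: the roots of $\det(\lambda I+M)$ are exactly $-\lambda_1(M),\dots,-\lambda_n(M)$, so if all $c_k(M)\ge 0$ then $\det(\mu I+M)\ge\mu^n>0$ for every $\mu>0$, whence $M$ has no negative eigenvalue; the converse is immediate since $c_{n-k}(M)$ is the sum of the $k\times k$ principal minors of $M$, a nonnegative quantity when $M\succeq 0$. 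Applying this pointwise, $P(t)\succeq 0$ for all $t\in[-1,1]$ if and only if each of the $n$ univariate polynomials $c_k(t)\defin c_k(P(t))$, $k=0,\dots,n-1$, is nonnegative on $[-1,1]$. Each $c_k(t)$ has degree at most $nd$ and can be extracted from the coefficients of $P(t)$ in strongly polynomial time, e.g.\ by running the Faddeev--LeVerrier recursion (or any fraction‐free determinant expansion) over the ring $\R[t]$, which uses only $\mathrm{poly}(n,d)$ arithmetic operations and produces rationals of polynomially bounded encoding length. The same reduction applies verbatim with $[-1,1]$ replaced by the real line.

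It then remains to prove the lemma the theorem relies on: given a univariate polynomial $p(t)$, one can decide in strongly polynomial time whether $p(t)\ge 0$ for all $t\in[-1,1]$ (resp. all $t\in\R$). The plan here is to observe that $p\ge 0$ on $[-1,1]$ fails precisely when $p(-1)<0$, or $p(1)<0$, or $p$ has a root of odd multiplicity in the open interval $(-1,1)$; the number of such roots equals the number of sign changes of $p$ across $(-1,1)$, which can be read off from a Sturm‐type remainder sequence of $p$ together with the squarefree part $p/\gcd(p,p')$. Computing these objects via the textbook Euclidean algorithm on polynomials causes exponential coefficient growth, so the crucial point is to use the \emph{subresultant} polynomial remainder sequence (equivalently, Edmonds‐style fraction‐free elimination, in the spirit of \cite{Grotschel1988}), whose entries are subdeterminants of the Sylvester matrix of $p$ and $p'$ and hence have polynomially bounded bit‐length, and which is computable with $\mathrm{poly}(\deg p)$ arithmetic operations. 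Evaluating the resulting sequence at the rational endpoints $\pm1$ and counting sign variations is likewise strongly polynomial, and feeding each $c_k(t)$ from the first step into this subroutine completes the proof.

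The conceptual content of the argument is the short passage from $P(t)\succeq 0$ to nonnegativity of the $n$ characteristic‐polynomial coefficients $c_k(t)$; the degrees blow up only by a factor of $n$, so that step is harmless. The real obstacle is making the scalar subroutine genuinely \emph{strongly} polynomial rather than merely polynomial in the bit model: one must certify that the encoding length of every intermediate rational arising in the gcd, squarefree‐decomposition and sign‐variation computations stays polynomially bounded, which is exactly what the subresultant machinery (as opposed to naive polynomial division) buys us, and one must phrase the characterization of ``nonnegative on a closed interval'' so as to correctly account for roots of even multiplicity and for the behaviour at the endpoints. Getting these complexity bookkeeping details right is the delicate part; the matrix‐to‐scalar reduction itself is routine once the characteristic‐polynomial criterion is in hand.
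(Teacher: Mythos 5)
Your matrix-to-scalar reduction is sound, and in fact slightly leaner than the paper's: the paper uses the characteristic polynomials of \emph{all} leading principal submatrices together with Sylvester's criterion, whereas you correctly observe that the full polynomial $\det(\lambda I + M)$ alone certifies positive semidefiniteness of a symmetric $M$. The genuine gap is in your scalar subroutine. The claimed characterization --- ``$p\ge 0$ on $[-1,1]$ fails precisely when $p(-1)<0$, or $p(1)<0$, or $p$ has a root of odd multiplicity in $(-1,1)$'' --- is false. Take $p(t)=t^2-1$: then $p(-1)=p(1)=0$, and $p$ has no root at all in the open interval $(-1,1)$, so your test accepts it, yet $p(0)=-1$. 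The same failure occurs for $p(t)=-(1-t^2)^2$, which is exactly the shape of factor that arises throughout this paper. The point you are missing is that the absence of odd-multiplicity roots in $(-1,1)$ only guarantees that $p$ has \emph{constant weak sign} on the interval; it does not tell you which sign, and the endpoint values cannot decide this when they vanish. The fix is easy (additionally verify that $p$ is positive at some interior point at which it does not vanish, or equivalently evaluate $p$ at $\deg p+1$ interior points and check that the nonzero values are positive), but as written the step fails. A secondary, smaller issue: a Sturm sequence of the squarefree part counts \emph{all} distinct real roots in an interval, not the odd-multiplicity ones, so to count sign changes you would need a squarefree decomposition $p=\prod_j s_j^j$ and a root count of $\prod_{j\ \mathrm{odd}} s_j$; this is doable with subresultants but is not what you wrote.

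For comparison, the paper avoids the interval-endpoint delicacy entirely: it first transports the problem from $[-1,1]$ to all of $\R$ at the matrix level via $Q(t)=(t^2+1)^d P\bigl(\tfrac{t^2-1}{t^2+1}\bigr)$ (using closedness of the psd cone to recover the endpoint $t=1$), reduces to scalar nonnegativity on $\R$ through coefficients of $\det(sI+P(t)_k)$ computed by evaluation--interpolation and Edmonds-style determinants, and then invokes the strongly polynomial nonnegativity test of Raghavendra--Ryder--Srivastava as a black box. If you repair your interval test as indicated (or apply the same M\"obius substitution to each scalar $c_k(t)$ and then test nonnegativity on $\R$), your argument goes through; your complexity bookkeeping via subresultants for the gcd and sign-variation computations is the right instinct and matches the spirit of the Edmonds machinery the paper relies on.
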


To prove this theorem, we first establish the following lemma which generalizes a result in~\cite{real_stability_testing}.

\begin{lemma}\label{lem:psd strongly poly}
Given a univariate polynomial matrix $P(t)$, one can check in strongly polynomial time whether $P(t)\succeq 0$ $\forall t \in \R$.
\end{lemma}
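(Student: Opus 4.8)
The plan is to reduce the problem of checking $P(t)\succeq 0$ for all $t\in\R$ to a question about a single univariate scalar polynomial, and then to appeal to a strongly polynomial algorithm for nonnegativity of univariate polynomials over the real line. First I would recall (this is the result of \cite{real_stability_testing} being generalized) that one can decide whether a univariate \emph{scalar} polynomial $p(t)$ satisfies $p(t)\geq 0$ for all $t\in\R$ in strongly polynomial time --- for instance via a Sturm sequence / subresultant computation, which uses only the arithmetic operations and whose intermediate coefficients stay polynomially bounded in the input size. So the whole task is to manufacture, from $P(t)$, a scalar polynomial whose global nonnegativity is equivalent to $P(t)\succeq 0$ on $\R$, using only strongly polynomial operations.

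The natural candidate is a suitable coefficient of the characteristic polynomial, but one must be careful because $\det P(t)$ can be nonnegative even when $P(t)$ has two negative eigenvalues. The clean approach is to use the following classical fact: a symmetric matrix $M$ is psd if and only if all coefficients of its characteristic polynomial $\det(\lambda I - M) = \sum_{k=0}^n (-1)^{n-k} c_k \lambda^k$ are nonnegative, i.e. $c_k \geq 0$ for all $k$ (equivalently, all principal minors of all sizes are nonnegative, and the $c_k$ are the sums of the size-$(n-k)$ principal minors up to sign). Applying this with $M = P(t)$, each $c_k$ becomes a univariate polynomial $c_k(t)$ in $t$ whose coefficients are obtained from those of $P(t)$ by a bounded number of additions and multiplications (expanding the determinant symbolically in $t$; the number of arithmetic operations depends only on $n$ and $d$, and the coefficient bit-lengths grow polynomially). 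Then $P(t)\succeq 0$ for all $t\in\R$ if and only if $c_k(t)\geq 0$ for all $t\in\R$ and all $k=0,\dots,n$, which is $n+1$ instances of the scalar global-nonnegativity test, hence strongly polynomial overall.

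The step I expect to be the main obstacle --- or at least the step requiring the most care --- is arguing that the symbolic computation of the coefficients $c_k(t)$ from the coefficients of $P(t)$ is genuinely strongly polynomial, i.e. that it can be arranged so that both the \emph{number} of arithmetic operations and the \emph{bit-length} of all intermediate quantities remain polynomially bounded in $\binom{n+1}{2}(d+1)$. Naive cofactor expansion of the determinant is exponential in $n$, so one should instead use a polynomial-time determinant algorithm that avoids division (e.g.\ the Berkowitz or Samuelson--Berkowitz algorithm, or Bareiss-style fraction-free elimination), carried out over the polynomial ring $\mathbb{Q}[t]$ truncated at degree $nd$; this keeps the operation count polynomial in $n$ and $d$ and the coefficients polynomially bounded. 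Alternatively, and perhaps more transparently, one can evaluate $\det(\lambda I - P(t))$ at sufficiently many integer sample points $(\lambda, t)$ and interpolate, since we know the total degree in advance --- interpolation nodes and the resulting linear algebra are all strongly polynomial. I would present the Berkowitz-type argument as the primary route and remark that the scalar nonnegativity test is exactly the $1\times 1$ case of the very theorem being proved (so in the final write-up Theorem~\ref{thm:psd strongly poly} follows from Lemma~\ref{lem:psd strongly poly} by restricting attention from $t\in\R$ to $t\in[-1,1]$, e.g.\ via the substitution $t = \tfrac{1+s^2}{\,\cdot\,}$ or a Goursat-type transformation mapping $[-1,1]$ onto $\R$, which I would handle when proving the theorem rather than the lemma).
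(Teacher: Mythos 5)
Your proposal is correct and follows essentially the same route as the paper: both reduce positive semidefiniteness of $P(t)$ over $\R$ to global nonnegativity of the coefficients of a characteristic-type polynomial $\det(sI+P(t))$ viewed as univariate polynomials in $t$, compute those coefficient polynomials in strongly polynomial time (the paper via evaluation--interpolation of determinants, you via a Berkowitz-type division-free determinant or interpolation), and then invoke the scalar nonnegativity test of \cite{real_stability_testing}. The only real difference is cosmetic: the paper applies the coefficient criterion to every leading principal submatrix $P(t)_k$ and closes the converse with Sylvester's criterion, whereas you apply it only to the full matrix and argue directly through the realness of the eigenvalues, which is a slight economy ($n+1$ scalar tests instead of $O(n^2)$).
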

\begin{proof}
Define the univariate polynomial
$$p_{t}(s) = \det (P(t) + sI),$$
where $I$ is the $n \times n$ identity matrix.
For $i=0,\dots,n$, let $c_{i}(t)$ denote the coefficient of $s^i$ in $p_{t}(s)$.
We first claim that for any $t \in \R$, $P(t) \succeq 0$ if and only if $c_{i}(t) \geq 0$ for $i=0,\dots,n$.

Fix $t \in \R$ and suppose first that $P(t)\succeq 0$.
For $j=1,\dots,n$, let $\lambda_j$ denote the (nonnegative) eigenvalues of $P(t)$.
Then we can write
$$
p_{t}(s) = \prod_{j=1}^n (\lambda_j + s).
$$
It is clear from this representation that all coefficients of $p_{t}(s)$ are nonnegative.

To see the converse, fix $t \in \R$ again and suppose $c_{i}(t) \geq 0$ for $i=0,\dots,n$.
For $j=1,\dots,n$, let $z_j$ denote the roots of $p_{t}(s)$.
Observe that the eigenvalues of $P(t)$ are precisely $-z_j$ for $j=1,\dots,n$.
Given that all coefficients of $p_{t}(s)$ are nonnegative, this polynomial cannot have any positive roots.
Thus $P(t)$ cannot have any nonnegative eigenvalues and thus, $P(t) \succeq 0$.
This completes the proof of the claim.

It remains to show that one can test nonnegativity of each function $c_{i}(t)$ in strongly polynomial time.
For the remainder of the proof, fix $i$ to be an integer between $0$ and $n$.
Observe that $c_{i}(t)$ is a polynomial in $t$ of degree $d(n-i)$.
To obtain the coefficients of $c_{i}(t)$, it suffices to evaluate this polynomial at $d(n-i) + 1$ distinct points and then solve a nonsingular linear system for the coefficients of the interpolating polynomial (recall one can solve a nonsingular linear system in strongly polynomial time; see, e.g., \cite[Corollary~1.4.9.a]{Grotschel1988}).
To evaluate $c_{i}(t)$ at a particular $t$, we need access to the coefficient of $s^i$ in $p_{t}(s)$.
To compute these coefficients, it similarly suffices to evaluate the polynomial $p_{t}(s)$ at $n+1$ distinct points and then solve an associated linear system.
Recall that the determinant of a constant matrix can be computed in strongly polynomial time (see, e.g., \cite[Corollary~1.4.9.c]{Grotschel1988}); hence, each evaluation of $p_{t}(s)$ can be done in strongly polynomial time via the determinantal definition of $p_{t}(s)$. Hence, overall, we can compute the coefficients of $c_{i}(t)$ by computing $(d(n-i) + 1)(n+1)$ determinants and solving $d(n-i) + 2$ linear systems.
With the coefficients of $c_{i}(t)$ at hand, one can check the nonnegativity of each $c_{i}(t)$ in strongly polynomial time via the method of \cite[Section~5]{real_stability_testing}.

\end{proof}

\begin{proof}[Proof of Theorem~\ref{thm:psd strongly poly}]
We claim that a polynomial matrix $P(t)$ of degree $d$ is psd for every $t \in [-1,1]$ if and only if the polynomial matrix $Q(t) = (t^2+1)^d P \left( \frac{t^2-1}{t^2+1} \right)$ is psd for every $t \in \R$.


Suppose first that  $P(t)\succeq 0$ $\forall t \in [-1, 1]$. For every $t \in \R$, we have $\frac{t^2-1}{t^2+1} \in [-1,1)$, and thus, $P \left( \frac{t^2-1}{t^2+1} \right) \succeq 0$.
Since $(t^2+1)^d$ is nonnegative, it follows that $Q(t) \succeq 0$ $\forall t \in \R$.

To see the converse, suppose for the sake of contradiction that $P(t) \not \succeq 0$ for some $t \in [-1,1]$. By the closedness of the psd cone, there is some $\hat{t} \in [-1,1)$ such that $P(\hat{t}) \not \succeq 0$.
Consider $\bar{t} = \sqrt{\frac{-1-\hat{t}}{-1+\hat{t}}}$.
Observe $Q(\bar{t}) = (\frac{-1-\hat{t}}{-1+\hat{t}} + 1)^d P(\hat{t})$.
Since $\hat{t} \in [-1,1)$, we have $\frac{-1-\hat{t}}{-1+\hat{t}} + 1 > 0$.
Therefore, $Q(\bar{t}) \not \succeq 0$, a contradiction.

Thus, to check that $P(t)$ is psd for every $t \in [-1,1]$, we can check if $Q(t)$ is psd for every $t \in \R$.
It is straightforward to see that the coefficients of $Q(t)$ can be derived from those of $P(t)$ in strongly polynomial time.
Hence, the result follows from Lemma~\ref{lem:psd strongly poly}.
\end{proof}

\subsection{Efficient search for GEs}\label{subsec:efficient_search}


In this section, we observe that the set of $n\times n$ polynomial matrices $P(t)$ of degree $d$ that satisfy $P(t) \succeq 0$ $\forall t \in [-1,1]$ has a semidefinite representation of size linear in $d$ (resp. in $n$) for fixed $n$ (resp. fixed $d$). As semidefinite programs (SDPs) can be solved in polynomial time to arbitrary accuracy~\cite{BoydSurvey}, this observation leads to efficient algorithms for applications where one needs to \emph{search} for GEs (see, e.g., Section~\ref{subsec:markowitz} and Section~\ref{subsec:RDO}). It can also reformulate the problem of checking if a given polynomial matrix $P(t)$ satisfies $P(t) \succeq 0$ $\forall t \in [-1,1]$ (i.e., the recognition question of the previous subsection) as a semidefinite programming feasibility problem. However, it is currently unknown whether semidefinite programming feasibility problems can be solved in polynomial time (let alone strongly polynomial time), and this is the justification for our alternative algorithm in the proof of Theorem~\ref{thm:psd strongly poly}.

The semidefinite programming formulation that we present here arises from a connection to sum of squares polynomials. 
We recall that a polynomial $p: \R^n \rightarrow \R$ is \emph{nonnegative} if $p(x) \geq 0$ for all $x \in \R^n$ and a \emph{sum of squares} (sos) if there exist polynomials $q_1(x),\ldots,q_m(x)$ such that $p(x) = \sum_{i=1}^m q_i^2(x)$. A polynomial matrix $Y: \R^{n} \rightarrow S^{k}$ is said to be an \emph{sos-matrix} if $Y(x) = A(x)^TA(x)$ for some (not necessarily square) polynomial matrix $A(x)$. It is a classical fact in algebra that a univariate polynomial matrix $Y(t)$ is positive semidefinite for all $t\in \mathbb{R}$ if and only if it is an sos-matrix; see, e.g.,~\cite{Yakubovich,Youla} for some early proofs, \cite[Theorem~7.1]{ChoiLamRez80} for a short constructive proof that specifies the inner dimension in the factorization, and \cite{aylward2007explicit} for more context, a decomposition algorithm, and connections to other areas.
We note that this result no longer holds if one considers a matrix of more than a single variable (see, e.g., \cite{choi}).

For our purposes, we need a statement that applies to univariate polynomial matrices that are positive semidefinite over an interval. This variant appears for example in \cite[Theorem~2.5]{DetStud} and \cite[Theorem 6.11]{papp2013semidefinite}.





\begin{theorem}
\label{thm:univarite_decomp}
Let $P(t)$ be a symmetric univariate polynomial matrix of degree $d$. If $d$ is odd, then $P(t) \succeq 0$ $\forall t \in [-1,1]$ if and only if there exist sos-matrices $X_1(t)$ and $X_2(t)$ of degree $d-1$ such that
$$
P(t) = (t+1)X_1(t) + (1-t) X_2(t) \quad \forall t \in \R.
$$
Similarly, if $d$ is even, then $P(t) \succeq 0$ $\forall t \in [-1,1]$ if and only if there exist sos-matrices $X_1(t)$ and $X_2(t)$ of degree $d$ and $d-2$, respectively, such that
$$
P(t) = X_1(t) + (1-t^2) X_2(t) \quad \forall t \in \R.
$$
\end{theorem}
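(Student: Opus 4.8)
The plan is to settle the ``if'' direction by inspection and to reduce the ``only if'' direction to the known factorization of a univariate polynomial matrix that is psd on all of $\R$ (recalled just above the theorem) by a rational change of variables. For the ``if'' direction: an sos-matrix $X(t)=A(t)^{T}A(t)$ satisfies $X(t)\succeq 0$ for every $t\in\R$, and on $[-1,1]$ the scalar weights $t+1$ and $1-t$ (odd case) and $1-t^{2}$ (even case) are nonnegative, so $P(t)$ is there a nonnegative combination of psd matrices and hence psd. The degree conditions are immediate: $\deg\bigl((t\pm1)X_i\bigr)\le(d-1)+1=d$ and $\deg\bigl((1-t^{2})X_2\bigr)\le(d-2)+2=d$.

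For the ``only if'' direction I would reuse the substitution from the proof of \Cref{thm:psd strongly poly}. Put $\phi(s):=\frac{s^{2}-1}{s^{2}+1}$, so $\phi(\R)=[-1,1)$, and set $Q(s):=(s^{2}+1)^{d}P(\phi(s))$. Then $Q$ is a polynomial matrix of degree at most $2d$, is even in $s$, and satisfies $Q(s)\succeq 0$ for all $s\in\R$ because $P\succeq 0$ on $[-1,1]$ and $(s^{2}+1)^{d}>0$. By the classical result with the explicit inner-dimension bound of \cite[Theorem~7.1]{ChoiLamRez80}, write $Q(s)=\sum_{k}v_k(s)v_k(s)^{T}$ for finitely many polynomial vectors $v_k$ of degree at most $d$. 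Split each $v_k$ into even and odd parts, $v_k(s)=w_k(s^{2})+s\,u_k(s^{2})$; since $Q(s)=Q(-s)$, averaging the factorizations of $Q(s)$ and $Q(-s)$ gives
$$Q(s)=\sum_{k}\Bigl(w_k(s^{2})w_k(s^{2})^{T}+s^{2}\,u_k(s^{2})u_k(s^{2})^{T}\Bigr),$$
where $\deg w_k\le\lfloor d/2\rfloor$, and $\deg u_k\le\lfloor d/2\rfloor$ in the odd case while $\deg u_k\le\lfloor d/2\rfloor-1$ in the even case.

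Finally I would undo the substitution. Writing $r=s^{2}$ and $r=\frac{1+t}{1-t}$ (so $\phi(s)=t$, $(r+1)^{d}=2^{d}/(1-t)^{d}$, and $r(1-t)=1+t$), the displayed identity becomes $\frac{2^{d}}{(1-t)^{d}}P(t)=\sum_k\bigl(w_k(r)w_k(r)^{T}+r\,u_k(r)u_k(r)^{T}\bigr)$ for all $t\in[-1,1)$. Multiplying through by $(1-t)^{d}$, a short computation shows that $\tilde{w}_k(t):=(1-t)^{\lfloor d/2\rfloor}w_k\!\bigl(\tfrac{1+t}{1-t}\bigr)$ and the analogous $\tilde{u}_k(t)$ (with exponent $\lfloor d/2\rfloor$ in the odd case and $\lfloor d/2\rfloor-1$ in the even case) are genuine polynomial vectors of the right degrees, and that the identity rearranges into
$$2^{d}P(t)=(1-t)\sum_k\tilde{w}_k(t)\tilde{w}_k(t)^{T}+(1+t)\sum_k\tilde{u}_k(t)\tilde{u}_k(t)^{T}$$
in the odd case and into
$$2^{d}P(t)=\sum_k\tilde{w}_k(t)\tilde{w}_k(t)^{T}+(1-t^{2})\sum_k\tilde{u}_k(t)\tilde{u}_k(t)^{T}$$
in the even case. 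These are polynomial identities valid on the infinite set $[-1,1)$, hence everywhere; absorbing the constant $2^{-d}$ into the sos-matrices $\sum_k\tilde{u}_k\tilde{u}_k^{T}$ and $\sum_k\tilde{w}_k\tilde{w}_k^{T}$ produces exactly the claimed decompositions, and the degree counts of $\tilde{w}_k,\tilde{u}_k$ give the bounds $d-1$ (odd case) and $d$, $d-2$ (even case).

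The only genuine difficulty here is bookkeeping, not ideas. Because $\phi$ is two-to-one one cannot simply ``solve for $t$'' inside $v_k(s)$, which is exactly why the even/odd splitting is needed; and since the degrees of $w_k$ and $u_k$ differ by one between the two parity cases, one must track them with care to land on the precise bounds in the statement. A useful sanity check to include is that $d-1$ is even when $d$ is odd, and $d$ and $d-2$ are even when $d$ is even, consistent with sos-matrices necessarily having even degree.
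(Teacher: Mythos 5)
Your proof is correct. Note that the paper does not actually prove \Cref{thm:univarite_decomp}; it cites \cite[Theorem~2.5]{DetStud} and \cite[Theorem~6.11]{papp2013semidefinite} and only recalls the real-line factorization it rests on. Your argument — push $[-1,1]$ to $\R$ via $t=\frac{s^2-1}{s^2+1}$, clear denominators with $(s^2+1)^d$, factor the resulting globally psd matrix via \cite[Theorem~7.1]{ChoiLamRez80}, symmetrize in $s$ to split into even and odd parts, and pull back with $r=s^2=\frac{1+t}{1-t}$ — is essentially the route the paper itself gestures at in its later remark about converting a decomposition of $Q(t)\mathrel{\mathop:}=(t^2+1)^dP(\frac{t^2-1}{t^2+1})$ into one for $P(t)$ ``by following the proof of \cite[Theorem~6.11]{papp2013semidefinite}.'' The bookkeeping checks out: the evenness of $Q$ justifies the averaging step, the degree counts $\deg w_k\le\lfloor d/2\rfloor$ and $\deg u_k\le\lfloor d/2\rfloor$ (odd $d$) or $\lfloor d/2\rfloor-1$ (even $d$) are right, the powers of $(1-t)$ distribute so that $\tilde w_k,\tilde u_k$ are genuine polynomials of the stated degrees, and the identity extends from $[-1,1)$ to all of $\R$ by the usual polynomial-identity argument. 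The one bound you attribute to \cite[Theorem~7.1]{ChoiLamRez80} that is worth making explicit is $\deg v_k\le d$; this in fact follows from any factorization, since the diagonal entries of $Q$ give $\sum_k (v_k)_i^2=Q_{ii}$ and hence bound each $\deg (v_k)_i$ by $\frac{1}{2}\deg Q\le d$.
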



As we describe next, Theorem~\ref{thm:univarite_decomp} leads to a semidefinite representation of polynomial matrices that are psd on $[-1,1]$. This is essentially based on the following facts: (i) A polynomial matrix $Y: \R^{n} \rightarrow S^{k}$ is an sos-matrix if and only if the (scalar-valued) polynomial $y^T Y(x) y$ in the variables $(x_1,\dots,x_n, y_1, \ldots, y_k)$ is a sum of squares; (ii) A polynomial $p: \R^n \rightarrow \R$ of degree $2d$ is a sum of squares if and only if there exists a psd matrix $Q$ such that $p(x)=v(x)^TQv(x)$, where $v(x)$ is the vector of all monomials of degree up to $d$ (see, e.g., \cite{ChoiLamRez, ParriloThesis}).



\begin{proposition}
\label{prop:sdp_transformation}
Let $P(t)$ be a symmetric univariate $n \times n$ polynomial matrix of degree $d$. For a positive integer $d'$, let 
$$v_{d'}(t,y) \defin \left (y_1,\dots,y_n, y_1t, \dots, y_nt, \dots, y_1t^{d'}, \dots, y_nt^{d'} \right)^T$$ be the vector of all monomials of the form $y_{\ell}t^k$ for $\ell=1,\dots,n$ and $k=0,\dots,{d'}$. 

If $d$ is odd, then $P(t) \succeq 0$ $\forall t \in [-1,1]$ if and only if there exist positive semidefinite matrices $Q_1, Q_2$ of size $(\frac{d+1}{2})n \times (\frac{d+1}{2})n$ that satisfy the following equations ($\forall t \in \R$ and $\forall y \in \R^n$):
\begin{itemize}
\itemsep0em
\item $P(t) = (t+1)X_1(t) + (1-t) X_2(t)$ ,
\item $y^T X_i(t) y = v_{\frac{d-1}{2}}(t,y)^T Q_i v_{\frac{d-1}{2}}(t,y)$ for $i=1,2$.
\end{itemize}
Similarly, if $d$ is even, then $P(t) \succeq 0$ $\forall t \in [-1,1]$ if and only if there exist positive semidefinite matrices $Q_1, Q_2$ of size $(\frac{d}{2}+1)n \times (\frac{d}{2}+1)n$ and $(\frac{d}{2})n \times (\frac{d}{2})n$, respectively, that satisfy the following equations ($\forall t \in \R$ and $\forall y \in \R^n$):
\begin{itemize}
\itemsep0em
\item $P(t) = X_1(t) + (1-t^2) X_2(t)$,
\item $y^T X_1(t) y = v_{\frac{d}{2}}(t,y)^T Q_1 v_{\frac{d}{2}}(t,y)$,
\item $y^T X_2(t) y = v_{\frac{d}{2}-1}(t,y)^T Q_2 v_{\frac{d}{2}-1}(t,y)$.
\end{itemize}
\end{proposition}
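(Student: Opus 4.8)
The plan is to derive Proposition~\ref{prop:sdp_transformation} by simply gluing together Theorem~\ref{thm:univarite_decomp} with the two ``standard facts'' (i) and (ii) recalled just before the proposition; almost everything is bookkeeping, and the only place that needs genuine care is pinning down the \emph{exact} size of the Gram matrices $Q_i$.

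First I would invoke Theorem~\ref{thm:univarite_decomp} verbatim. For $d$ odd it says $P(t)\succeq 0$ on $[-1,1]$ iff there exist sos-matrices $X_1(t),X_2(t)$ of degree $d-1$ with $P(t)=(t+1)X_1(t)+(1-t)X_2(t)$; for $d$ even, iff there exist sos-matrices $X_1(t)$ of degree $d$ and $X_2(t)$ of degree $d-2$ with $P(t)=X_1(t)+(1-t^2)X_2(t)$ (the degree arithmetic $1+(d-1)=d$ and $2+(d-2)=d$ checks out in the two cases). So it suffices to prove the following local claim: for any integer $d'\geq 0$, an $n\times n$ symmetric univariate polynomial matrix $X(t)$ of degree $2d'$ is an sos-matrix \emph{if and only if} $y^TX(t)y=v_{d'}(t,y)^TQ\,v_{d'}(t,y)$ for some psd matrix $Q$ of size $n(d'+1)$. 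Granting this, I would substitute it into the two decompositions above with $d'=\tfrac{d-1}{2}$ (odd case), giving Gram matrices of size $n\bigl(\tfrac{d-1}{2}+1\bigr)=n\bigl(\tfrac{d+1}{2}\bigr)$, and with $d'=\tfrac d2$ and $d'=\tfrac d2-1$ (even case), giving sizes $n\bigl(\tfrac d2+1\bigr)$ and $n\bigl(\tfrac d2\bigr)$. Note here we use that all three relevant degrees $d-1$ (for $d$ odd), $d$, $d-2$ (for $d$ even) are even, so $d'\defin \deg(X)/2$ is an integer and $v_{d'}$ is the right vector.

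For the local claim: by fact~(i), $X(t)$ is an sos-matrix iff the scalar polynomial $g(t,y)\defin y^TX(t)y$ in the $n+1$ variables $(t,y_1,\dots,y_n)$ is a sum of squares, say $g=\sum_k q_k^2$. The key observation is that $g$ is bihomogeneous: it is homogeneous of degree $2$ in $y$ and has degree $\leq 2d'$ in $t$. Since squares cannot cancel leading terms, each $q_k$ must have degree $\leq d'$ in $t$; and comparing $y$-homogeneous parts forces each $q_k$ to be homogeneous of degree exactly $1$ in $y$ (the $y$-degree-$0$ part of $\sum_k q_k^2$ is $\sum_k q_{k,0}^2=0$, so $q_{k,0}=0$, and the top $y$-degree of $g$ being $2$ bounds the $y$-degree of $q_k$ by $1$). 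Hence each $q_k$ lies in the span of the monomials $\{y_\ell t^i:1\leq\ell\leq n,\ 0\leq i\leq d'\}$, which are precisely the entries of $v_{d'}(t,y)$; writing $q_k=c_k^Tv_{d'}(t,y)$ and $Q\defin\sum_k c_kc_k^T\succeq 0$ yields $g(t,y)=v_{d'}(t,y)^TQ\,v_{d'}(t,y)$ with $Q$ of size $n(d'+1)$. Conversely, such a representation with $Q\succeq 0$ factors $Q=\sum_k c_kc_k^T$ and exhibits $g$ as a sum of squares, so $X(t)$ is an sos-matrix by fact~(i) again. This is fact~(ii) specialized to the bihomogeneous polynomial $g$: the Newton-polytope reasoning behind fact~(ii) lets us shrink the generic monomial vector of degree $d'+1$ in $(t,y)$ down to $v_{d'}(t,y)$.

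The main (and essentially only) obstacle is this monomial-reduction step: one must be careful to justify that in \emph{every} sos decomposition of $g=y^TX(t)y$ the summands use only monomials that are linear in $y$ and of $t$-degree at most $d'$, since this is exactly what makes $Q_i$ have the stated size rather than something larger. Once that is established, plugging the local claim into Theorem~\ref{thm:univarite_decomp} and matching sizes is routine, and the remark at the end of the preceding discussion — that the coefficients of $P(t)$ are affine in the entries of the $X_i(t)$, which are in turn affine in the entries of $Q_i$ — is what turns the equivalences into a genuine semidefinite feasibility reformulation.
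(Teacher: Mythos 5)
Your proposal is correct and follows essentially the same route the paper indicates: combine Theorem~\ref{thm:univarite_decomp} with facts (i) and (ii) to turn the sos-matrix conditions into Gram-matrix conditions. The only content beyond what the paper states explicitly is your bihomogeneity argument justifying that the Gram matrices can be taken over the reduced monomial vector $v_{d'}(t,y)$ (rather than all monomials of degree up to $d'+1$ in $(t,y)$), and that argument is correct and matches the sizes claimed in the proposition.
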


Since two polynomials are equal everywhere if and only if they have the same coefficients, Proposition~\ref{prop:sdp_transformation} reduces the task of checking the psd condition in Definition~\ref{def:GE_d_defn} to solving an SDP; see, e.g., \cite[Proposition 2]{TV-SDP} for a more explicit representation of this SDP. 

\begin{remark}
We make two remarks regarding computational considerations:
\begin{enumerate}
\itemsep0em
\item The sizes of the matrices $Q_1,Q_2$ in~Proposition~\ref{prop:sdp_transformation} grow only linearly with $d$ (resp. with $n$) for fixed $n$ (resp. fixed $d$). This is in contrast to the SDP hierarchies that arise in a search for convex homogeneous polynomials of degree $d$ in $n$ variables whose sublevel sets can also be considered as a natural generalization of ellipsoids. The size of the semidefinite constraint, even in the first level of this SDP hierarchy (see~\cite{PolyNorms}), grows at the rate $n\left( \substack{n +\frac{d}{2} -2\\\frac{d}{2}-1} \right)$.

\item For implementation purposes, many parsers (e.g., YALMIP~\cite{Yalmip} or SumOfSquares.jl~\cite{Julia}) directly accept sum of squares constraints on a polynomial or polynomial matrix with unknown coefficients and do the conversion to an SDP automatically. Therefore, for our purposes, one can use these parsers to directly work with the representation in Theorem~\ref{thm:univarite_decomp}. The resulting SDPs can be readily solved, e.g., by general-purpose interior point methods. While not the focus of this paper, we suspect implementation improvements may be possible by exploiting the univariate nature of the sum of squares constraints, for example using techniques from~\cite{LofPar,Papp,PappYildiz,LYP,HarrisParrilo}. 
\end{enumerate}
\label{rmk:computational}
\end{remark}

Finally, we note that by searching over polynomial matrices of degree $d$ that satisfy the psd condition in Definition~\ref{def:GE_d_defn}, we are searching over the closure of the set of polynomial matrices that satisfy both conditions in that definition. We can always check the kernel condition in Definition~\ref{def:GE_d_defn} via Lemma~\ref{lem:kernel_cond} as a post-processing step. This type of approach is common in applications of semidefinite and sum of squares programming where the optimization set of interest is not closed.


\section{Conic Representation of GEs} \label{sec:conic_rep}
In this section, we study whether GEs admit a representation as the feasible set of different families of tractable conic programs. This is relevant for applications where one needs to optimize over a GE. For the sake of clarity, we stress that in contrast to Section~\ref{subsec:efficient_search}, where the representation was in the space of coefficients of polynomial matrices, the representation questions that we are concerned with in this section are in $x$ space and relate to a fixed GE-$d$ as defined in \eqref{eq:GE_d_defn}.
The results of Section~\ref{subsec:efficient_search} show that one can use semidefinite programming to search for polynomial matrices which define a GE, whereas the results of this section are concerned with the problem of optimizing over the set of vectors contained in a given GE.

It is clear that one cannot always optimize over a given GE using linear programming. Indeed, since ellipsoids in dimension two or higher have an infinite number of extreme points, already a GE-$0$ fails to be polyhedral. In our next three subsections, we ask if increasingly broader classes of convex sets can represent GEs.

\subsection{Can GEs be described by finitely many convex quadratic constraints?}\label{subsec:intersection}


Considering the definition of a GE-$d$ in \eqref{eq:GE_d_defn} and the fact that a GE-$0$ is an ellipsoid, a natural question that comes to mind is whether a GE-$d$ can always be described by finitely many convex quadratic constraints.
The next proposition shows that this is the case only when $d=0$ or $d=1$.


\begin{proposition}
\label{prop:finite_intersec}
The following are equivalent:
\begin{enumerate} [(i)]
\item for every GE-$d$ $\mathcal{E}_d$, there exists a nonnegative integer $m$ and matrices $P_1,\dots P_m \succeq 0$ such that $$ \mathcal{E}_d = \{ x \in \R^n \mid x^T P_i x \leq 1 \quad i=1,\dots,m\};$$
\item $d \in \{0,1\}$.
\end{enumerate}
\end{proposition}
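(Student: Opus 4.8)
The plan is to prove the two non-trivial directions: first, that $d \in \{0,1\}$ implies the intersection-of-convex-quadratics representation, and second (the interesting direction), that for every $d \geq 2$ there is a GE-$d$ that is \emph{not} such an intersection.

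For the forward direction, I would argue as follows. If $d = 0$, then $P(t) = P \succ 0$ is constant and $\mathcal{E}_d = \{x \mid x^T P x \leq 1\}$ is already a single convex quadratic constraint. If $d = 1$, write $P(t) = A + tB$ with $A, B \in S^n$. The constraint $x^T P(t) x \leq 1$ for all $t \in [-1,1]$ is affine in $t$, so by linearity its extreme values over $[-1,1]$ are attained at the endpoints $t = \pm 1$; hence $\mathcal{E}_1 = \{x \mid x^T(A+B)x \leq 1,\ x^T(A-B)x \leq 1\}$. The psd condition gives $A + B \succeq 0$ and $A - B \succeq 0$, so both constraints are convex quadratic. (One should double-check that the kernel condition forces nothing pathological, but since $\bigcap_t \operatorname{Ker}(P(t)) = \operatorname{Ker}(A+B) \cap \operatorname{Ker}(A-B) = \{0\}$, the intersection is a genuine convex body.) Thus (ii) $\Rightarrow$ (i).

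For the reverse direction — the main content — I want to exhibit, for each fixed $d \geq 2$, a single GE-$d$ that is not representable as $\{x \mid x^T P_i x \leq 1,\ i = 1,\dots,m\}$ for any finite family of psd matrices. The key structural fact is that the boundary of any such finite intersection is a \emph{piecewise} quadric: it consists of finitely many pieces, each lying on an algebraic surface of degree $2$, so its Gauss map / its set of boundary normal directions is a finite union of (pieces of) dual quadrics, and in particular the boundary fails to be ``strictly curved in a polynomial-of-higher-degree way'' on any open arc. Concretely, I would pick a GE-$d$ whose supporting/defining polynomial genuinely uses degree $d$ in $t$ — for instance something like the $d=2$ example $P(t) = \begin{bmatrix} 2 - t^2 & t \\ t & 3 - t^2 \end{bmatrix}$ from Figure~\ref{fig:GE_examples}(c) (and an analogous higher-degree family for general $d$) — and show that for a full-dimensional, relatively open portion of the boundary the maximizing $t^\ast(x) = \argmax_{t \in [-1,1]} x^T P(t) x$ lies in the open interval $(-1,1)$ and varies non-constantly with $x$. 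On that portion, by first-order optimality the boundary equation is $x^T P(t^\ast(x)) x = 1$ together with $x^T P'(t^\ast(x)) x = 0$; eliminating $t^\ast$ (a resultant computation) yields a polynomial describing the boundary arc whose degree in $x$ strictly exceeds $2$, and one checks this arc is not contained in any single quadric $\{x^T Q x = 1\}$. Since a finite intersection of convex quadratic constraints has a boundary covered by finitely many degree-$\leq 2$ pieces, and our boundary has an open arc lying on an irreducible higher-degree curve/surface (hence on no quadric, and not coverable by finitely many quadric pieces), no such representation exists.

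The main obstacle I anticipate is making the ``open arc of the boundary lies on a genuinely higher-degree surface'' claim rigorous and dimension-independent: one must verify (a) that $t^\ast(x) \in (-1,1)$ on an open set of $x$ rather than being pinned to an endpoint, (b) that the eliminant is truly of degree $> 2$ and is not, by some coincidence in the chosen family, reducible to a quadric, and (c) that piecewise-quadric boundaries cannot mimic this — i.e., an open subset of a degree-$> 2$ irreducible variety is not contained in a finite union of quadrics. Part (c) is a clean algebraic-geometry fact (an irreducible variety contained in a finite union of varieties lies in one of them, plus a degree comparison), so the real work is in (a)–(b), which I would handle by an explicit choice of $P(t)$ for which the computations are transparent, then remark that perturbations/products give examples in every dimension $n \geq 2$ and every degree $d \geq 2$.
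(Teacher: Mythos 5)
Your forward direction ((ii) $\Rightarrow$ (i)) is exactly the paper's argument: a GE-$0$ is an ellipsoid, and for a GE-$1$ the map $t \mapsto x^TP(t)x$ is affine, so the maximum over $[-1,1]$ is attained at $t=\pm 1$, giving an intersection of two convex quadratic constraints. For the reverse direction you also land on the same counterexample the paper uses, namely $P(t) = \begin{bmatrix} 2-t^2 & t \\ t & 3-t^2\end{bmatrix}$ in $\R^2$ (which, being of degree at most $d$ for every $d\geq 2$, settles all higher degrees at once -- so your closing remark about needing ``an analogous higher-degree family'' is unnecessary). Where you diverge is in how the impossibility is derived. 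You argue on the boundary: an open arc of $\partial\mathcal{E}_2$ lies on an irreducible variety of degree $>2$, while the boundary of a finite intersection of quadrics is covered by finitely many quadrics, and an irreducible higher-degree curve cannot be so covered. The paper instead argues on the defining functions: with $t^*(x) = \frac{x_1x_2}{x_1^2+x_2^2}\in[-1,1]$ one gets $\mathcal{E}_2 = \{h(x)\leq 1\}$ for $h(x) = \frac{x_1^2x_2^2}{x_1^2+x_2^2}+2x_1^2+3x_2^2$; if also $\mathcal{E}_2=\{g(x)\leq 1\}$ with $g(x)=\max_i x^TP_ix$, then $g$ and $h$ are both homogeneous of degree $2$ with the same $1$-sublevel set, hence equal everywhere, and the restriction $h(x_1,1)$ is visibly not piecewise quadratic while $g(x_1,1)$ is.

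The practical difference is that the paper's homogeneity argument completely sidesteps your step (b), which is the genuinely delicate part of your plan. Eliminating $t^*$ from the example gives the quartic $2x_1^4+6x_1^2x_2^2+3x_2^4-x_1^2-x_2^2=0$, and you would have to verify this is irreducible (or at least has no quadratic factor containing the relevant arc); quartics of this even form can factor into two conics, and the substitution $u=x_1^2$, $v=x_2^2$ being irreducible in $(u,v)$ does not by itself rule this out. If irreducibility failed, your degree comparison in (c) would collapse. You can repair this without any algebraic geometry by borrowing the paper's observation: if an open arc of $\{h=1\}$ lay on a single quadric $\{x^TQx=1\}$, then by degree-$2$ homogeneity of both $h$ and $x^TQx$ the rational function $h-x^TQx$ would vanish on an open cone, hence identically, forcing $h$ to be a polynomial -- which it is not. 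I would recommend replacing your resultant/irreducibility step with this homogeneity argument (or with the paper's one-variable restriction), at which point your proof closes; your step (c) and your verification of (a) (indeed $t^*(x)\in[-\tfrac12,\tfrac12]$ for all $x\neq 0$) are fine as stated.
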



\begin{proof}
(ii) $\Rightarrow$ (i):
A GE-$0$ is precisely an ellipsoid so the claim is established with $m=1$.
We show that a GE-1 can always be described by two convex quadratic constraints.
Let $$\mathcal{E}_1 = \{x \in \R^n \: | \:  \max_{t \in [-1,1]} x^TP(t)x \leq 1\}$$ be a GE-1. Since $P(t)$ is a univariate polynomial matrix of degree 1, for a given $x \in \R^n$, $x^TP(t)x$ is an affine function in $t$, and therefore its maximum over $[-1,1]$ is attained at $t =1$ or at $t =-1$. Hence, 
\begin{equation*}
\hspace{3.3cm}
\begin{aligned}
\mathcal{E}_1 &= \{x \in \R^n \: | \:  \max\limits_{t \in \{-1,1\}} x^TP(t)x \leq 1\} \\
&= \{x \in \R^n \: | \: x^TP(1)x \leq 1\} \cap \{x \in \R^n \: | \: x^TP(-1)x \leq 1\}. 
\end{aligned}
\end{equation*}
This establishes the claim with $m=2$.\footnote{In fact, the intersection of any given two co-centered ellipsoids can be represented by a GE-1. This is a special case of Theorem~\ref{thm:semiellipsoid} in Section~\ref{sec:rep_power}.}

(i) $\Rightarrow$ (ii):
We show that for $d \geq 2$, there are GE-$d$s that cannot be described by finitely many convex quadratic constraints. Consider the set given by $\mathcal{E}_2 = \{x \in \R^2 \: | \:  \max_{t \in [-1,1]} x^TP(t)x \leq 1\}$ with 
$$P(t) = \begin{bmatrix}
     2-t^2 & t \\
     t & 3-t^2
\end{bmatrix}.$$
This set is depicted in Figure~\ref{fig:sub3}. It is easy to verify that $P(t) \succ 0 \:\: \forall t \in [-1,1]$, and therefore $\mathcal{E}_2$ is a valid GE-$2$. For any given $x \neq 0$, the function $x^TP(t)x = -(x_1^2+x_2^2) t^2 + 2x_1x_2t + 2x_1^2 + 3x_2^2$ is a concave quadratic polynomial in $t$ and is maximized at $t^* = \frac{x_1x_2}{x_1^2+x_2^2}$. Observe that $t^* \in [-1,1]$. It is then easy to verify that 
$$\mathcal{E}_2 = \left\{x \in \R^2 \: \middle | \: \frac{x_1^2x_2^2}{x_1^2+x_2^2} + 2x_1^2 + 3 x_2^2 \leq 1 \right\}.$$
Let $h(x) \defin \frac{x_1^2x_2^2}{x_1^2+x_2^2} + 2x_1^2 + 3 x_2^2$ and assume for a contradiction that $\mathcal{E}_2$ can be described by finitely many (convex) quadratic constraints, i.e., $\mathcal{E}_2 = \{x \in \R^2 \: | \:  x^T P_i x \leq 1 \quad i=1,\dots,m\}$ for some (psd) matrices $P_1,\dots, P_m$. Let $g(x) \defin \max\limits_{i=1,\dots,m} x^TP_ix$. Since $g(x)$ and $h(x)$ have the same 1-sublevel set and the same degree of homogeneity, we have $g(x) = h(x)$ for every $x \in \R^2$.
It is straightforward to verify that $h(x_1,1)$ is not piecewise-quadratic while $g(x_1,1)$ is.
This is a contradiction.
\end{proof}

\subsection{Are GEs SOCP-representable?} \label{subsec:socp_rep}

Since second-order order cone programs (SOCPs) are more expressive than convex quadratically constrained programs, it is natural to ask if GEs are SOCP-representable sets. For $n \geq 1$, recall the definition of the $n$-dimensional second-order cone:
$$
L_n \defin \left\{x \in \R^n \:\: \middle | \:\: \sqrt{\sum_{i=1}^{n-1} x_i^2} \leq x_n \right\}.
$$
A set $\Omega \subseteq \R^n$ is \emph{SOCP-representable} if for some nonnegative integers $k,m$, a cone $K \subset \R^m$ which is the product of second-order cones, some matrices $A \in \R^{m \times n}$ and $B \in \R^{m \times k}$, and some vector $b \in \R^m$, one can write
$$
\Omega = \{x \in \R^n \: \mid \: \exists u  \in \R^k \text{ s.t. } Ax + Bu + b \in K\}.
$$
See \cite{BTNem} for a reference on properties of SOCP-representable sets.
Since sets defined by convex quadratic constraints are SOCP-representable, we already know from Proposition~\ref{prop:finite_intersec} that GE-0s and GE-1s are SOCP-representable. The next theorem shows that this is not the case for all GE-$d$s.

\begin{theorem}\label{thm: not socp}
There exists a GE-16 in dimension 9 that is not SOCP-representable.
\end{theorem}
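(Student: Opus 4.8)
The plan is to exhibit an explicit GE-16 in $\R^9$ and rule out any SOCP representation by invoking a known obstruction to SOCP-representability. The natural tool here is the result of Scheiderer (and the earlier work on "second-order cone representable" versus "semidefinite representable" sets) showing that SOCP-representable sets are exactly those whose algebraic boundary is cut out, in a precise sense, by polynomials arising from products of quadratics — equivalently, that a convex semialgebraic set is SOCP-representable only if certain derived varieties have specific degree/irreducibility structure. More concretely, I would use the fact that SOCP-representable cones have a "scaled diagonal" or "rational" structure so that a convex body whose support function (or whose defining polynomial's irreducible factors) has degree growing too fast, or whose Zariski closure of the boundary is a hypersurface of too high a degree that does not factor through quadrics, cannot be SOCP-representable. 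The standard reference for this circle of ideas is Scheiderer's "Semidefinite representation of convex sets and convex hulls" and the SOCP-specific obstruction results; I would cite these (the paper already cites \cite{BTNem} for SOCP background, but the obstruction itself would need a sharper reference).

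First I would pick a one-dimensional cross-section or projection strategy: to produce a GE-16 that is not SOCP-representable, it suffices to produce one whose boundary, in some affine slice, is an algebraic curve of degree linked to $16$ whose defining polynomial is irreducible of odd or otherwise "SOCP-forbidden" degree. The GE-$d$ norm is $\|x\|_{\mathcal{E}_d} = \max_{t\in[-1,1]}\sqrt{x^T P(t) x}$; the unit ball's boundary is governed by the envelope of the family of quadrics $x^T P(t) x = 1$, and eliminating $t$ produces a polynomial in $x$ whose degree in $x$ is controlled by $d$. The idea is to choose $P(t)$ — a $9\times 9$ polynomial matrix of degree $16$ — so that this discriminant/envelope polynomial is an irreducible plane curve (after restricting to a generic $2$-plane) of a degree that is provably incompatible with SOCP representability. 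The dimension $9$ and degree $16$ are presumably chosen so that the envelope curve has exactly the degree where the first counterexample arises; I would reverse-engineer $P(t)$ from the desired envelope curve, e.g. by taking a curve known not to be SOCP-representable (such as $\{x_1^{2m} + x_2^{2m}\le 1\}$ type curves, or the convex hull of a high-degree monomial curve) and realizing its polar or a related body as a GE-16 using the representation power results from Section~\ref{sec:rep_power} — in particular Theorem~\ref{thm:semiellipsoid}, which says intersections of $m$ semiellipsoids are GE-$d$s with $d\le 2m-3$, giving $d = 16$ at $m = ?$... actually $2m-3 = 16$ is not integral, so more likely the construction is direct rather than via that theorem, or uses $d \le 2m-3$ with $m = 10$ giving $d \le 17$ and then a degree drop.

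The key steps, in order, would be: (1) state the explicit $P(t)$ and verify it satisfies the psd condition on $[-1,1]$ and the kernel condition, so that the set is a genuine GE-16 (this is a finite, if tedious, check, streamlined by Theorem~\ref{thm:psd strongly poly}/Theorem~\ref{thm:univarite_decomp} applied to the specific matrix); (2) identify the set it defines — compute the envelope of the quadric family, i.e. eliminate $t$, to get an explicit description of $\partial \mathcal{E}_{16}$ as (part of) an algebraic hypersurface, or alternatively identify $\mathcal{E}_{16}$ as a linear image / section of a known body; (3) invoke the algebraic obstruction: show that any SOCP representation of $\mathcal{E}_{16}$ would, by the structure theory of SOCP-representable sets (closedness under linear images, intersections, etc.), force a lower-dimensional section to be SOCP-representable, contradicting a known non-SOCP-representable example of that degree; (4) conclude. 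The main obstacle is step (3): cleanly citing or proving the precise statement that the specific section or projection cannot be SOCP-representable. SOCP-representability is not as cleanly characterized as SDP-representability, so the counterexample has to be engineered to fall under an existing theorem (e.g. Scheiderer-type results on when the cone of nonnegative forms or a given convex hull is SOCP-representable, or the fact that the set must have a "lifted SOCP" description whose slack variety has bounded complexity). A secondary obstacle is making step (2)'s elimination tractable — here the univariate structure of $P(t)$ helps, since eliminating a single parameter $t$ from $x^T P(t) x = 1$ and $x^T P'(t) x = 0$ is a resultant computation of predictable degree in $x$, so the degree bookkeeping (degree $2$ in $x$, degree $16$ in $t$, two equations, resultant of degree $\sim 2\cdot 16$ in the remaining) should pin down why $16$ is the relevant threshold.
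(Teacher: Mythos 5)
Your overall strategy---write down an explicit $P(t)$, verify the psd and kernel conditions, identify the resulting set, and then use closure of SOCP-representability under affine (pre)images to reduce to a known non-representable set---is the right shape, and it matches the paper's proof at that level. You are also right that the construction is direct rather than via Theorem~\ref{thm:semiellipsoid}. But the proposal has a genuine gap exactly where you flag it: step (3). None of the obstructions you propose is actually a theorem that applies. Scheiderer's results concern \emph{semidefinite} representability (and in the plane they are positive results), not SOCP-representability; there is no ``degree of the boundary hypersurface'' obstruction to SOCP-representability, since SOCP-representable sets can have algebraic boundaries of arbitrarily high degree; and your candidate family $\{x_1^{2m}+x_2^{2m}\le 1\}$ is in fact SOCP-representable for every $m$ (rational powers and $p$-norm balls are standard SOCP constructions, see \cite{BTNem}), so it cannot serve as the target of the reduction. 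The envelope/resultant computation in step (2) is also a red herring: the paper never eliminates $t$.

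The actual ingredients are as follows. The correct obstruction is Fawzi's theorem that the cone of nonnegative univariate polynomials of degree at most $4$ is not SOCP-representable (Corollary~1 of \cite{fawzi}); this is the only known-to-the-paper non-SOCP-representable convex semialgebraic set, and the whole construction is reverse-engineered from it. One takes $\phi(t)=(1,t,\dots,t^8)^T$ and $P(t)=\phi(t)\phi(t)^T$, a rank-one degree-$16$ polynomial matrix in dimension $9$ (this explains both numbers: degree $16=2\cdot 8$ and dimension $9=8+1$). The resulting GE is then \emph{identified exactly}, with no elimination, as the unit ball of the sup-norm on $\R_8[t]$, namely $\Omega=\{p\in\R_8[t] : |p(t)|\le 1 \ \forall t\in[-1,1]\}$. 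The reduction to Fawzi's cone is via the affine map $f:\R_4[t]\to\R_8[t]$, $q\mapsto 2(1-t^2)^4\,q\bigl(2t/(1-t^2)\bigr)-1$, using that $t\mapsto 2t/(1-t^2)$ maps $(-1,1)$ onto $\R$; one computes $f^{-1}(\Omega)=\{q\in\R_4[s] : 0\le q(s)\le g(s)\ \forall s\}$ for an explicit function $g$ bounded below by $\max(1,s^4/16)$, and a final conic-hull/scaling argument (again using closure properties of SOCP-representable sets from \cite{BTNem}) shows that SOCP-representability of this truncated set would force SOCP-representability of the full cone of nonnegative quartics, contradicting Fawzi. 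Without identifying Fawzi's result and the sup-norm-ball interpretation of $\phi(t)\phi(t)^T$, the proof cannot be completed along the lines you describe.
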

Our proof relies on the following result from \cite{fawzi}.
\begin{theorem}[Corollary~1 of \cite{fawzi}]\label{thm: poly not socp}
The set of nonnegative univariate polynomials of degree (at most) 4 is not SOCP-representable.
\end{theorem}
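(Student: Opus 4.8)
The plan is to use the classical duality between nonnegative polynomials and truncated moments to transfer the question to a linear section of the positive semidefinite cone, and then to invoke a genus-based obstruction to second-order cone representability, essentially as in \cite{fawzi}. First I would fix notation: let $C \subset \R^5$ be the cone of nonnegative univariate quartics, identified with the vector of coefficients. By the fundamental theorem of algebra (equivalently, the $n=2$ case of Hilbert's theorem applied to the homogenization), a univariate quartic $p$ is nonnegative on $\R$ if and only if $p = \sum_j q_j^2$ for some polynomials $q_j$ of degree at most $2$. Writing $H(m) = [m_{i+j}]_{0 \le i,j \le 2}$ for the Hankel matrix built from $m = (m_0,\dots,m_4) \in \R^5$, it follows that a linear functional $m$ satisfies $\langle m, p\rangle \ge 0$ for all $p \in C$ if and only if $c^T H(m) c \ge 0$ for all $c \in \R^3$, i.e. if and only if $H(m) \succeq 0$. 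Hence the dual cone $C^*$ is linearly isomorphic to the cone $\mathcal{H}$ of positive semidefinite $3\times 3$ Hankel matrices, which is a $5$-dimensional linear section of $S^3_+$.

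Next I would reduce to showing that $\mathcal{H}$ is not SOCP-representable. SOCP-representability is preserved under linear images, affine preimages (in particular intersections with affine subspaces), and --- since the Lorentz cone is self-dual --- under passing to the dual of a closed convex cone. Thus $C$ is SOCP-representable if and only if $\mathcal{H}$ is, and it suffices to treat $\mathcal{H}$. It is worth noting why one should not attempt to argue on the primal side: $C$ equals the cone of sums of squares of binary quadratics, hence a linear image of $S^3_+$, so the known non-representability of $S^3_+$ does not transfer to it (images can only simplify); moreover the algebraic boundary of $C$ is the discriminant hypersurface of quartics with a real double root, which is rational via the parametrization $(x-s)^2(ax^2+bx+c)$, so the obstruction below is invisible on $C$ itself. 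Dualizing to $\mathcal{H}$ is therefore essential.

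To show $\mathcal{H}$ is not SOCP-representable I would combine two ingredients. (a) If a full-dimensional planar convex set $\Omega \subset \R^2$ is SOCP-representable, then every irreducible component of the Zariski closure of its topological boundary is a rational curve: a Lorentz cone is a quadric (genus $0$), intersections with affine subspaces and linear projections keep the relevant boundary components dominated by rational varieties, and a curve dominated by a rational variety is unirational, hence rational by L\"uroth's theorem. (b) A generic $2$-dimensional section of the cone $\mathcal{H}$ is a planar convex body whose boundary lies on the plane cubic cut out by $\det H = 0$; projectively, the cubic hypersurface $\{\det H = 0\} \subset \mathbb{P}^4$ is singular exactly along the rank-$\le 1$ Hankel locus, which is the rational normal quartic curve, a subvariety of codimension $3$, so a generic plane $\mathbb{P}^2 \subset \mathbb{P}^4$ misses it and the resulting plane cubic is smooth --- an elliptic curve of genus $1$. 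Since affine sections of SOCP-representable sets are SOCP-representable, ingredient (a) applied to this generic section of $\mathcal{H}$ contradicts (b). Therefore $\mathcal{H}$, and hence $C$, is not SOCP-representable.

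The main obstacle is making ingredient (a) rigorous. The slogan ``second-order cone $\leftrightarrow$ conics $\leftrightarrow$ genus zero'' is compelling, but turning it into a theorem requires genuine care with real (rather than complex) algebraic geometry: one must control how the topological boundary of a linear projection of a region bounded by conics acquires silhouette and branch pieces, and verify that each such piece is still a rational curve. This is precisely the technical heart of \cite{fawzi}, which I would invoke for it rather than redo. A minor secondary point, worth a sentence, is checking the closedness hypotheses needed to dualize cleanly and confirming that a generic planar section of $\mathcal{H}$ genuinely has a boundary arc on the smooth real locus of the cubic --- both hold because $\mathcal{H}$ is a closed, full-dimensional cone that is not all of $\R^5$.
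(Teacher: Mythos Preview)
The paper does not prove this statement at all; it is quoted as Corollary~1 of \cite{fawzi} and used as a black box in the proof of Theorem~\ref{thm: not socp}. Your reduction to the dual cone $\mathcal{H}$ of positive semidefinite $3\times 3$ Hankel matrices via closed-cone duality is correct, and so is your ingredient (b) that a generic $2$-plane section of $\mathcal{H}$ is bounded by a smooth plane cubic of genus $1$. The fatal problem is ingredient (a): it is simply false that an SOCP-representable planar convex body must have rational algebraic boundary. A clean counterexample is the $\ell^4$ ball $\{(x,y)\in\R^2: x^4+y^4\le 1\}$. This set is SOCP-representable (introduce auxiliaries $u,v$ with the rotated-cone constraints $x^2\le u$, $y^2\le v$ and the disk constraint $u^2+v^2\le 1$, then project out $u,v$), yet its boundary $x^4+y^4=1$ has as projective closure a smooth plane quartic, hence an irreducible curve of genus $3$ that is not rational.

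The breakdown in your heuristic for (a) is exactly at the projection step you flagged as delicate: the topological boundary of the projected set is governed by the critical (silhouette) locus of the projection restricted to the lifted boundary, and that locus is an \emph{intersection} of several of the quadric boundary pieces, not the image of any single one. Intersections of rational varieties need not be rational (already the intersection of two quadrics in $\mathbb{P}^3$ is generically an elliptic curve), so the L\"uroth argument cannot be completed. This is also not what \cite{fawzi} proves; Fawzi's obstruction to SOCP-representability proceeds through the structure of second-order cone lifts and an associated cone-factorization argument, not through the genus of the algebraic boundary. If you wish to supply a proof rather than cite the result, you would need to follow that route.
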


\begin{proof}[Proof of Theorem~\ref{thm: not socp}]
Let $\phi(t) \defin (1,t,t^2,\dots,t^8)^T$.
Define $P(t) \defin \phi(t)\phi(t)^T$ and 
$$\Omega \defin \{x \in \R^9 \mid x^T P(t) x \leq 1 \quad \forall t\in [-1,1]\}.$$
Observe that $\Omega$ is a valid GE as $P(t)\succeq 0$ for all $t\in[-1,1]$ and the kernel condition is satisfied. To see that the latter claim, suppose for the sake of contradiction that there exists a nonzero vector $y \in\mathbb{R}^9$ such that $P(t)y=0$ for all $t\in[-1,1]$. This would imply that the univariate polynomial $y^T\phi(t)$ vanishes for all $t\in[-1,1]$, which can only happen if all its coefficients are zero, hence contradicting the fact that $y$ was a nonzero vector.

We claim that $\Omega$ is not SOCP-representable. Let $\R_d[t]$ denote the set of polynomials with real coefficients in the variable $t$ of degree at most $d$. We identify a vector $x \in \R^9$ with the degree-$8$ polynomial $p(t) = x^T \phi(t)$.
Using this identification, we can write
$$\Omega = \{ p \in \R_8[t] \mid |p(t)| \leq 1 \quad \forall t\in [-1,1]\}.$$
Suppose for the sake of contradiction that $\Omega$ was SOCP-representable.
Consider the affine map $f:~\R_4[t] \rightarrow \R_8[t]$ which maps a degree-$4$ polynomial $q(t)$ to the degree-$8$ polynomial
$$
p(t) = 2(1-t^2)^4 q\left(\frac{2t}{1-t^2}\right) - 1.
$$
Since taking the inverse image of a set under an affine map preserves SOCP-representability (see \cite[2.3.D]{BTNem}), it follows that the set
$$
f^{-1}(\Omega) = \left\{q \in \R_4[t] \:\: \middle | \:\: \left|2(1-t^2)^4 q\left(\frac{2t}{1-t^2}\right) - 1\right| \leq 1 \quad \forall t\in [-1,1]\right\}
$$
is SOCP-representable.
Note that $t \rightarrow \frac{2t}{1-t^2}$ maps the interval $(-1,1)$ to the entire real line.
Furthermore, this map possesses an inverse, $s \rightarrow \frac{\sqrt{s^2+1}-1}{s}$.
Now, for any polynomial $q \in \R_4[t]$, we have
\begin{align*}
\hspace{4cm}
& \left| 2(1-t^2)^4 q\left(\frac{2t}{1-t^2}\right) - 1 \right| \leq 1 \quad \forall t\in [-1,1]\\
\Leftrightarrow \quad & 0 \leq (1-t^2)^4 q\left(\frac{2t}{1-t^2}\right) \leq 1 \quad \forall t\in [-1,1]\\
\Leftrightarrow \quad & 0 \leq \left(1-\left(\frac{\sqrt{s^2+1}-1}{s}\right)^2\right)^4 q(s) \leq 1 \quad \forall s\in \R \\
\Leftrightarrow \quad & 0 \leq q(s) \leq g(s) \quad \forall s\in \R,
\end{align*}
where $g(s) \defin (1-(\frac{\sqrt{s^2+1}-1}{s})^2)^{-4}$.
Hence,
$$
f^{-1}(\Omega) = \{q \in \R_4[s] \: \mid \: 0 \leq q(s) \leq g(s) \:\: \forall s\in \R\}.
$$
By \cite[Proposition~2.3.1]{BTNem}, it follows that the set
$$\left \{(q,M) \in \R_4[s] \times \R \:\: \middle | \:\: M > 0, \:\: 0 \leq \frac{q(s)}{M} \leq g(s) \:\: \forall s\in \R \right \}$$
is SOCP-representable. Since the map $(q,M) \rightarrow q$ is affine, it follows (see, e.g., \cite[2.3.C]{BTNem}) that the set
$$\{q \in \R_4[s] \: \mid \: \exists M > 0 \text{ s.t. } 0 \leq q(s) \leq M g(s) \:\: \forall s\in \R\}$$
is also SOCP-representable. 

One can check that $g(s) \geq \max(1,\frac{s^4}{16})$ for all $s \in \R$. Therefore, for all $q \in \R_4[s]$, there exists some $M > 0$ such that $q(s) \leq M g(s)$ for all $s\in \R$. Thus, the above set is equal to the set
$$
\{q \in \R_4[s] \: \mid \: q(s) \geq 0 \:\: \forall s\in \R\}.
$$
SOCP-representability of this set contradicts Theorem~\ref{thm: poly not socp}.
\end{proof}

\subsection{Are GEs SDP-representable?} \label{subsec:sdp_representable}

Since semidefinite programs are more expressive than second-order cone programs, it is natural to ask if GEs are SDP-representable sets. In this section, we answer this question in the affirmative. Recall that a set $\Omega \subseteq \R^n$ is \emph{SDP-representable} if
$$\Omega = \left\{x \in \R^n \: \middle | \: \exists y \in \R^k \textup{ s.t. } A_0 + \sum_{i=1}^{n} x_iA_i + \sum_{i=1}^{k} y_iB_i \succeq 0 \right\}$$
for some integer $k \geq 0$ and symmetric $m \times m$ matrices $A_0, A_1, \dots, A_n, B_1, \dots, B_k$.


\begin{theorem}
\label{thm:sdp_representable}
Every GE is an SDP-representable set.
\end{theorem}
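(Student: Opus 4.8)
The plan is to write a fixed GE-$d$ as a spectrahedral projection by dualizing the semi-infinite quadratic condition $x^{T}P(t)x \le 1$ for all $t \in [-1,1]$ using the sum-of-squares representation of Theorem~\ref{thm:univarite_decomp}. Concretely, fix $x \in \R^{n}$ and consider the univariate polynomial $q_{x}(t) \defin 1 - x^{T}P(t)x$, which has degree at most $d$ in $t$. Then $x \in \mathcal{E}_{d}$ if and only if $q_{x}(t) \ge 0$ for all $t \in [-1,1]$. By Theorem~\ref{thm:univarite_decomp} (applied to the $1\times 1$ polynomial matrix $q_{x}(t)$, i.e.\ to scalar sos decompositions), this holds if and only if there is a Markov–Lukács-type weighted-sos certificate: for $d$ odd, $q_{x}(t) = (t+1)\sigma_{1}(t) + (1-t)\sigma_{2}(t)$ with $\sigma_{1},\sigma_{2}$ sos of degree $d-1$; for $d$ even, $q_{x}(t) = \sigma_{1}(t) + (1-t^{2})\sigma_{2}(t)$ with $\sigma_{1}$ sos of degree $d$ and $\sigma_{2}$ sos of degree $d-2$.

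Next I would linearize the sos conditions using fact (ii) quoted before Proposition~\ref{prop:sdp_transformation}: a univariate polynomial of degree $2r$ is sos iff it equals $w_{r}(t)^{T}Q\,w_{r}(t)$ for some $Q \succeq 0$, where $w_{r}(t) = (1,t,\dots,t^{r})^{T}$. So in the odd case we introduce auxiliary psd matrices $Q_{1}, Q_{2}$ of size $\tfrac{d+1}{2} \times \tfrac{d+1}{2}$ and impose the polynomial identity
$$
1 - x^{T}P(t)x = (t+1)\,w_{\frac{d-1}{2}}(t)^{T}Q_{1}w_{\frac{d-1}{2}}(t) + (1-t)\,w_{\frac{d-1}{2}}(t)^{T}Q_{2}w_{\frac{d-1}{2}}(t)
$$
for all $t$, and analogously in the even case with sizes $\tfrac{d}{2}+1$ and $\tfrac{d}{2}$. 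The crucial point is that the left-hand side is \emph{affine} in $x$ (entrywise, each coefficient of $t^{k}$ in $x^{T}P(t)x$ is a quadratic form in $x$ — wait, that is not affine). To handle this I would instead treat the vector of coefficients of $q_{x}$ as the data: matching coefficients of $t^{0},\dots,t^{d}$ on both sides yields $d+1$ scalar equations, each of which is linear in the entries of $Q_{1}, Q_{2}$ and \emph{quadratic} in $x$. A quadratic equality is not directly an LMI, so the clean way is to first establish the intermediate claim that the set of coefficient vectors $c = (c_{0},\dots,c_{d})$ of degree-$\le d$ univariate polynomials that are nonnegative on $[-1,1]$ is SDP-representable (this is exactly Proposition~\ref{prop:sdp_transformation}/Theorem~\ref{thm:univarite_decomp} in the scalar case, and is affine in $c$), and then compose with the quadratic map $x \mapsto c(x)$ where $c_{k}(x) = \delta_{k0} - x^{T}P_{k}x$ and $P(t) = \sum_{k} P_{k}t^{k}$.

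The composition step is where a quadratic-in-$x$ map meets an SDP-representable set, so the remaining obstacle is to show SDP-representability is preserved. This I would resolve by the standard Schur-complement trick: the LMI in the SDP representation of the coefficient body depends affinely on $c$, hence affinely on the entries of the outer product $xx^{T}$; lifting $x$ to a symmetric matrix variable $Z = xx^{T}$ replaces every $x^{T}P_{k}x$ by $\langle P_{k}, Z\rangle$, which is affine in $Z$. We then add the constraint $Z \succeq xx^{T}$, equivalently the Schur complement $\begin{bmatrix} 1 & x^{T} \\ x & Z \end{bmatrix} \succeq 0$, which is an LMI in $(x,Z)$. It remains to check that relaxing $Z = xx^{T}$ to $Z \succeq xx^{T}$ does not enlarge the projection onto $x$: this holds because each $P_{k}$ enters through a \emph{convex} quantity — more precisely, because $P(t) \succeq 0$ on $[-1,1]$, the function $x \mapsto \max_{t \in [-1,1]} x^{T}P(t)x$ is convex, so its sublevel set is convex and monotone under $Z \succeq xx^{T}$ in the sense that increasing $Z$ only makes $\langle P_{k}, Z\rangle$-type terms appear with the right sign inside the sos certificate; I expect a short argument using the sos structure (the weights $t+1$, $1-t$, $1-t^{2}$ are nonnegative on $[-1,1]$ and the $\sigma_{i}$ are sos, so the whole construction is monotone in the Loewner order on $Z$). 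Verifying this monotonicity cleanly — equivalently, that the lifted feasible region projects exactly onto $\mathcal{E}_{d}$ rather than a strict superset — is the main technical hurdle; everything else is bookkeeping with monomial vectors and Schur complements. Finally, the kernel condition guarantees $\mathcal{E}_{d}$ is bounded, but is not needed for representability per se.
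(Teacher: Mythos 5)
Your final construction is, in substance, the paper's second SDP representation of a GE: lift $x$ to a matrix variable $Z$ constrained by the LMI $\begin{bmatrix} 1 & x^T \\ x & Z\end{bmatrix}\succeq 0$ (i.e.\ $Z\succeq xx^T$), replace each quadratic $x^TP_kx$ by the affine quantity $\langle P_k,Z\rangle$, and encode nonnegativity of the scalar polynomial $1-\text{Tr}(ZP(t))$ on $[-1,1]$ via the weighted-sos certificate of Theorem~\ref{thm:univarite_decomp}/Proposition~\ref{prop:sdp_transformation}, which is affine in $Z$. The step you single out as ``the main technical hurdle''---that relaxing $Z=xx^T$ to $Z\succeq xx^T$ does not enlarge the projection---has a one-line resolution, and your proposed route through the internal structure of the sos certificate (monotonicity of the weights and the $\sigma_i$) is not the right mechanism. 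The certificate's only role is to guarantee the semantic statement $\text{Tr}(ZP(t))\le 1$ for all $t\in[-1,1]$; from there you conclude directly that for every such $t$,
$$x^TP(t)x=\text{Tr}\left(xx^TP(t)\right)\le \text{Tr}\left(ZP(t)\right)\le 1,$$
where the first inequality uses only that $Z-xx^T\succeq 0$ and $P(t)\succeq 0$, so that $\text{Tr}\bigl((Z-xx^T)P(t)\bigr)\ge 0$. With that observation inserted, your argument is complete (and you are right that the kernel condition is irrelevant to representability). For comparison, the paper also gives an alternative representation that factors $P(t)$ itself, e.g.\ as $B(t)^TB(t)+(1-t^2)C(t)^TC(t)$, and Schur-complements $x^TP(t)x\le 1$ into a polynomial LMI whose coefficients are affine in $x$; that version avoids the $n\times n$ lift and can be smaller when $P(t)$ admits a low-rank factorization, but your lifted version is exactly the paper's ``SDP representation 2.''
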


\begin{proof}
We provide two different SDP representations, one which explicitly uses the decomposition in Theorem~\ref{thm:univarite_decomp}, and one which implicitly uses the psd condition of Definition~\ref{def:GE_d_defn}. The second representation will typically be of smaller size, but the first one can be smaller when the polynomial matrix $P(t)$ has a low-rank decomposition. See Remark~\ref{rem:size_comparison} for a more precise comparison.

\vspace{0.2cm}

\noindent \emph{SDP representation 1:} Let $\mathcal{E}_d \subset \R^n$ be a GE-$d$ defined by the $n\times n$ polynomial matrix $P(t)$ of degree $d$. Since $P(t)\succeq 0$ for all $t\in[-1,1]$, recalling the definition of an sos-matrix, by Theorem~\ref{thm:univarite_decomp} we can write:
$$
P(t) = B(t)^TB(t) + (1-t^2) C(t)^TC(t)
$$
if $d$ is even and
$$
P(t) = (t+1)B(t)^TB(t) + (1-t) C(t)^TC(t)
$$
if $d$ is odd, where $B(t)$ and $C(t)$ are univariate polynomial matrices of respective sizes $r_1 \times n$ and $r_2 \times n$.
As shown in \cite[Theorem~7.1]{ChoiLamRez80}, there always exists a decomposition such that $r_1,r_2 \leq 2n$.
The degrees of $B(t)$ and $C(t)$ are respectively $\frac{d}{2}$ and $\frac{d}{2} - 1$ if $d$ is even, or both equal to $\frac{d-1}{2}$ if $d$ is odd.

Observe that by taking Schur complements, for any $x\in \R^n$ and $t \in [-1,1]$, we have
$$
x^T P(t) x \leq 1 \Leftrightarrow M_x(t) \defin \begin{bmatrix}
    I_{r_1} & 0 & B(t) x \\
    0 & (1-t^2) I_{r_2} & (1-t^2) C(t) x \\
    x^T B(t)^T & (1- t^2) x^T C(t)^T & 1
\end{bmatrix} \succeq 0
$$
for $d$ even, or 
$$
x^T P(t) x \leq 1 \Leftrightarrow M_x(t) \defin \begin{bmatrix}
    (1+t)I_{r_1} & 0 & (1+t)B(t) x \\
    0 & (1-t) I_{r_2} & (1-t) C(t) x \\
    (1+t)x^T B(t)^T & (1- t) x^T C(t)^T & 1
\end{bmatrix} \succeq 0
$$
for $d$ odd.
Then, we can write
$$
\mathcal{E}_d = \{x \in \R^n \: | \: M_x(t) \succeq 0 \:\: \forall t \in [-1,1] \}.
$$
Observe that in both cases, $M_x(t)$ is a univariate polynomial matrix whose coefficients depend affinely on $x$.
Thus, in view of Proposition~\ref{prop:sdp_transformation}, the constraint that $M_x(t) \succeq 0 \:\: \forall t \in [-1,1]$ can be reduced to affine and semidefinite constraints on $x$.
It follows that $\mathcal{E}_d$ is an SDP-representable set.

\vspace{0.2cm}

\noindent \emph{SDP representation 2:} Let $\mathcal{E}_d \subset \R^n$ be a GE-$d$ defined by the $n\times n$ polynomial matrix $P(t)$ of degree $d$. We claim that $\mathcal{E}_d = \bar{\mathcal{E}}_d$, where 
$$
\bar{\mathcal{E}}_d \defin \left \{ x \in \R^n \:\: \middle | \:\: \exists X \in S^n \text{ s.t. } \begin{bmatrix} X & x \\ x^T & 1\end{bmatrix} \succeq 0, \quad \text{Tr}(X P(t)) \leq 1 \:\:\: \forall t \in [-1,1] \right\}.
$$
The set $\bar{\mathcal{E}}_d$ is clearly SDP-representable since the constraint that $\text{Tr}(X P(t)) \leq 1$ for all $ t \in [-1,1]$ can be reformulated as an SDP constraint in view of Proposition~\ref{prop:sdp_transformation} (applied to the scalar-valued polynomial $1 - \text{Tr}(X P(t))$).

To see that $\mathcal{E}_d = \bar{\mathcal{E}}_d$, first observe that any $x \in \mathcal{E}_d$ also belongs to $\bar{\mathcal{E}}_d$ since the vector $x$ and the matrix $X = xx^T$ satisfy the constraints of $\bar{\mathcal{E}}_d$.
Conversely, for any $x \in \bar{\mathcal{E}}_d$, fix a matrix $X \in S^n$ such that $\begin{bmatrix} X & x \\ x^T & 1\end{bmatrix} \succeq 0$ and $\text{Tr}(X P(t)) \leq 1$ for all $t \in [-1,1]$.
By taking a Schur complement, the first constraint implies that $X \succeq xx^T$.
For all $t \in [-1,1]$, since $P(t)\succeq 0$, we have
$$
x^T P(t)x = \text{Tr}(xx^T P(t))
\leq \text{Tr}(X P(t)) \leq 1,
$$
and therefore $x \in \mathcal{E}_d$.
\end{proof}


\begin{remark}\label{rem:size_comparison}
Following the proof of Theorem~\ref{thm:sdp_representable} and Proposition~\ref{prop:sdp_transformation}, one can check that the size of the semidefinite constraints necessary to represent a GE-$d$ in dimension $n$ via the first construction in the proof of Theorem~\ref{thm:sdp_representable} is 
$$
\begin{cases}
(\frac{d}{4}+1)(r_1+r_2+1) & d \equiv 0\mod 4\\
(\frac{d+3}{4})(r_1+r_2+1) & d \equiv 1\mod 4\\
(\frac{d+2}{4}+1)(r_1+r_2+1) & d \equiv 2\mod 4\\
(\frac{d+1}{4}+1)(r_1+r_2+1) & d \equiv 3\mod 4.
\end{cases}
$$
Given that by \cite[Theorem~7.1]{ChoiLamRez80} we can always take $r_1,r_2 \leq 2n$, the size of the semidefinite constraints grows only linearly with $d$ (resp. with $n$) for fixed $n$ (resp. fixed $d$). 

Similarly, one can check that the size of the semidefinite constraints necessary to represent a GE-$d$ in dimension $n$ via the second construction in the proof of Theorem~\ref{thm:sdp_representable} is 
$$
\begin{cases}
\max\left \{n + 1, \frac{d}{2} + 1\right \} & d \text{ even}\\
\max\left \{n + 1, \frac{d+1}{2} \right\} & d \text{ odd}.
\end{cases}
$$
Thus, the size of the semidefinite constraints grows only linearly with $\max\{n, d\}$.

By contrast, to our knowledge, sublevel sets of $n$-variate homogeneous convex polynomials of degree $d$ (which would also serve as a natural generalization ellipsoids) are not known to have a semidefinite representation~\cite{HeltonNie}.
If one instead works with ``sos-convex'' polynomials (a stronger condition), then the sublevel sets do have a semidefinite representation (see, e.g.,~\cite{HeltonNie,Lasserre2}), but the size of the semidefinite constraint would be $ \left( \substack{n +\frac{d}{2}\\\frac{d}{2}} \right)$.
\end{remark}


\begin{remark}
Note that the second SDP representation in the proof of Theorem~\ref{thm:sdp_representable} is directly in terms of the polynomial matrix $P(t)$.
The first SDP representation, however, is in terms of polynomial matrices $B(t)$ and $C(t)$ that appear in the decomposition of the polynomial matrix $P(t)$. In some situations (e.g., the application in Section~\ref{subsec:shift regression}, the construction in the proof of Theorem~\ref{thm: not socp}, or when dealing with factor models), the matrix $P(t)$ appears already in decomposed form. In situations where this is not the case, there are multiple ways of obtaining polynomial matrices $B(t)$ and $C(t)$ from the polynomial matrix $P(t)$.

One option is to solve the SDP in Proposition~\ref{prop:sdp_transformation} to find psd matrices $Q_1$ and $Q_2$ in a sum of squares decomposition of $y^TX_i(t)y$, for $i=1,2$, associated with the sos-matrices $X_1(t),X_2(t)$ that appear in Theorem~\ref{thm:univarite_decomp}.
By performing a matrix decomposition of the form $Q_i=L_i^TL_i$, $i=1,2$, we can readily get an expression for $B(t)$ and $C(t)$; see, e.g.,~\cite[Lemma 1]{SchererHol}. We note that there are alternative ways of factoring univariate sos-matrices; e.g., by using the algorithm proposed in~\cite{aylward2007explicit}, or by following the proof of~\cite[Theorem~7.1]{ChoiLamRez80}.

Another option is to directly find a decomposition of the polynomial matrix $Q(t) \defin (t^2+1)^d P(\frac{t^2-1}{t^2+1})$ as $R(t)^T R(t)$ for a polynomial matrix $R(t)$.
Such a decomposition must exist (since $Q(t) \succeq 0$ for all $t \in \R$) and can be found by methods mentioned in the previous paragraph.
One can then convert the decomposition of $Q(t)$ into a suitable decomposition of $P(t)$, e.g., by following the proof of~\cite[Theorem~6.11]{papp2013semidefinite}.
\end{remark}


\subsubsection{Distance between two GEs}\label{subsec:distance_comp}

As an application of Theorem~\ref{thm:sdp_representable}, we show here how one can compute the distance between two GEs. The problem of computing distances between two convex sets has applications in many areas, for example robotics, computer-aided design, and computer graphics~\cite{GJK, AHMS}.

As a concrete example, consider the following two GEs 
$$\mathcal{E}^y = \{x \in \R^n \: | \:  (x-c_y)^TP_y(t)(x-c_y) \leq 1 \:\: \forall t \in [-1,1]\},$$ 
$$\mathcal{E}^z = \{x \in \R^n \: | \:  (x-c_z)^TP_z(t)(x-c_z) \leq 1 \:\: \forall t \in [-1,1]\},$$ 
where
$$P_y(t) = \begin{bmatrix} 6-4t & 2-3t & 1-t \\ 2-3t & 3-t & -t \\ 1-t & -t & 2 \end{bmatrix}, 
\hspace{0.5cm} 
P_z(t) = \begin{bmatrix} 2-t^2 & t & 0 \\ t & 3-t^2 & 0 \\ 0 & 0 & 1 \end{bmatrix},
\hspace{0.5cm} 
c_y = \begin{bmatrix} 0 \\ 0 \\ 0 \end{bmatrix},
\hspace{0.5cm}
c_z = \begin{bmatrix} 1 \\ -1 \\ 1 \end{bmatrix}.$$
The distance between $\mathcal{E}^y$ and $\mathcal{E}^z$ is equal to
\vspace{-0.5cm}
\begin{multicols}{3}
\begin{equation*}
\hspace{-0.2cm}
\begin{aligned}
& \min_{y,z} &&||y-z||_2 \\
& \text{s.t.} && y \in \mathcal{E}^y \\
&&& z \in \mathcal{E}^z
\end{aligned}
\end{equation*}
\hspace{0.8cm}
\begin{equation*}
\hspace{-1.5cm}
\begin{aligned}
& \quad = \quad \\
&
\end{aligned}
\end{equation*}
\hspace{0.8cm}
\begin{equation*}
\hspace{-5.2cm}
\begin{aligned}
& \min_{y,z} &&||y-z||_2\\
& \text{s.t.} && (y-c_y)^T P_y(t)(y-c_y) \leq 1 \:\: \forall t \in [-1,1] \\
&&& (z-c_z)^T P_z(t)(z-c_z) \leq 1 \:\: \forall t \in [-1,1].
\end{aligned}
\end{equation*}
\end{multicols}

\noindent As $\mathcal{E}^y$ is a GE-1, from the proof of Proposition~\ref{prop:finite_intersec}, we have $y+c_y \in \mathcal{E}^y$ if and only if
\begin{equation}\label{eq: y constraint}
\hspace{3.1cm}    
y^T \begin{bmatrix}
    10&5&2\\
       5&4&1\\
       2&1&2
\end{bmatrix} y  \leq 1 \quad \text{and} \quad
y^T \begin{bmatrix}
    2&-1& 0\\
      -1& 2&-1\\
       0&-1& 2\\
\end{bmatrix} y
 \leq 1.
\end{equation}

\noindent Since $P_z(t) \succeq 0 $ for all $t\in[-1,1]$ and its degree is even, by Theorem~\ref{thm:univarite_decomp}, we can find a representation of the form
$$
P_z(t) = B(t)^TB(t) + (1-t^2) C(t)^TC(t).
$$
We can take for example
$$
B(t) = \begin{bmatrix}
    \frac{1}{\sqrt{2}} & 0 & 0 \\
    0 & 0 & -1 \\
    \frac{t}{\sqrt{2}} & \sqrt{2} & 0
\end{bmatrix} \quad \text{and} \quad 
C(t) =  \begin{bmatrix}
    0 & 1 & 0 \\
    \sqrt{\frac{3}{2}} & 0 & 0
\end{bmatrix}.
$$


\noindent Following the first construction in the proof of Theorem~\ref{thm:sdp_representable}, we have that $z+c_z \in \mathcal{E}^z$ if and only if
\begin{equation}\label{eq: z constraint}
\begin{bmatrix}
    I_{3} & 0 & \begin{bmatrix}
    \frac{1}{\sqrt{2}} & 0 & 0 \\
    0 & 0 & -1 \\
    \frac{t}{\sqrt{2}} & \sqrt{2} & 0
\end{bmatrix} z \\
    0 & (1-t^2) I_{2} & (1-t^2) \begin{bmatrix}
    0 & 1 & 0 \\
    \sqrt{\frac{3}{2}} & 0 & 0
\end{bmatrix} z \\
    z^T \begin{bmatrix}
    \frac{1}{\sqrt{2}} & 0 & 0 \\
    0 & 0 & -1 \\
    \frac{t}{\sqrt{2}} & \sqrt{2} & 0
\end{bmatrix}^T & (1- t^2) z^T \begin{bmatrix}
    0 & 1 & 0 \\
    \sqrt{\frac{3}{2}} & 0 & 0
\end{bmatrix}^T & 1
\end{bmatrix} \succeq 0 \quad \forall t\in [-1,1].
\end{equation}

\noindent Thus, the problem of computing the distance between $\mathcal{E}_y$ and $\mathcal{E}_z$ can be reformulated as 
\begin{equation}\label{eq: distance 1}
\begin{aligned}
\hspace{5.4cm}
\min_{y,z} \quad & \|(y+c_y)-(z+c_z) \| \\
\text{s.t.} \quad & \eqref{eq: y constraint}, \eqref{eq: z constraint}.
\end{aligned}
\end{equation}

Alternatively, by the second construction in the proof of Theorem~\ref{thm:sdp_representable}, we have that $z+c_z \in \mathcal{E}^z$ if and only if there is a matrix $Z \in S^3$ such that 
\begin{equation}\label{eq:Z_constraints}
\begin{aligned}
\hspace{4cm}
\begin{bmatrix}Z & z \\ z^T & 1 \end{bmatrix} \succeq 0, \quad \quad \quad \text{Tr}(Z P_z(t)) \leq 1 \:\:\: \forall t \in [-1,1].
\end{aligned}
\end{equation}

\noindent Thus, the problem of computing the distance between $\mathcal{E}_y$ and $\mathcal{E}_z$ can be reformulated as
\begin{equation}\label{eq: distance 2}
\begin{aligned}
\hspace{5.4cm}
\min_{y,z,Z} \quad & \|(y+c_y)-(z+c_z) \| \\
\text{s.t.} \quad & \eqref{eq: y constraint}, \eqref{eq:Z_constraints}. 
\end{aligned}
\end{equation}

In view of Proposition~\ref{prop:sdp_transformation}, both \eqref{eq: distance 1} and \eqref{eq: distance 2} are semidefinite programming problems.
By solving either numerically, we find the distance between $\mathcal{E}_y$ and $\mathcal{E}_z$ to be 0.4635 to four digits of accuracy.
The two GEs and a line segment connecting points in each set that attain this minimum distance are plotted in Figure~\ref{fig:distance_fig}.


\begin{figure}[ht]
\centering
\includegraphics[width=.5\linewidth]{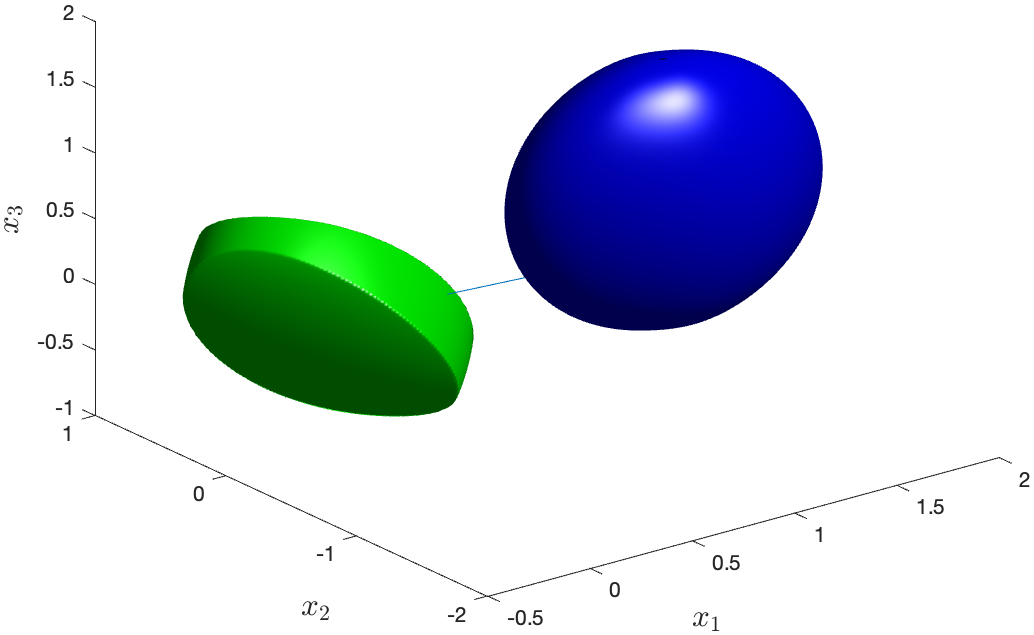}
\caption{The distance between the GEs $\mathcal{E}^y$ and $\mathcal{E}^z$ in Section~\ref{subsec:distance_comp}.}
\label{fig:distance_fig}
\end{figure}

\section{How Expressive are GEs?}
\label{sec:rep_power}

We have already seen that GEs can express some nontrivial convex sets; e.g., the set of univariate polynomials $p:\R \rightarrow \R$ such that $|p(t)|\leq 1$ for $t\in[-1,1]$ (see the proof of Theorem~\ref{thm: not socp}). In this section, we show that every symmetric
full-dimensional polytope and every finite intersection of co-centered ellipsoids can be represented exactly as a GE (Corollary~\ref{cor:ellipsoids and polytopes}, following from Theorem~\ref{thm:semiellipsoid}). We then use this result to show that every convex body
can be approximated arbitrarily well by a GE. We also quantify the quality of the approximation as a function of the degree of the GE (Theorem~\ref{thm:approximation_by_GE}).
We end this section by describing how GEs can behave under polar duality.

Let us begin with a definition. We call a set $T \subseteq \R^n$ a \emph{semiellipsoid} if it can be written as $T = \{x \in \R^n \mid x^T P x \leq 1\}$ for some positive semidefinite matrix $P$. Note that a compact semiellipsoid is always a GE-0. 

\begin{theorem}
\label{thm:semiellipsoid}
For every integer $m \geq 2$, a compact intersection of $m$ semiellipsoids is a GE-$d$ with $d \leq 2m-3$.
\end{theorem}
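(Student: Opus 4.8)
The plan is to represent the intersection $\bigcap_{i=1}^m \{x : x^TP_ix \le 1\}$ as a single GE by building a polynomial matrix $P(t)$ whose values, as $t$ ranges over $[-1,1]$, ``sweep out'' a convex combination of the individual matrices $P_i$ in a way that recovers all of them. The natural first step is the pointwise identity: since each $P_i \succeq 0$, we have $x \in \bigcap_i\{x^TP_ix\le 1\}$ if and only if $\sum_i \lambda_i\, x^TP_ix \le 1$ for every point $\lambda = (\lambda_1,\dots,\lambda_m)$ in the unit simplex $\Delta_m$ (the ``only if'' direction uses $\sum\lambda_i = 1$, and the ``if'' direction follows by taking $\lambda$ to be a vertex $e_i$). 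So the target set equals $\{x : x^T\!\big(\sum_i\lambda_iP_i\big)x \le 1\ \forall \lambda\in\Delta_m\}$. If I can find a univariate polynomial map $\gamma:[-1,1]\to\Delta_m$ of degree $2m-3$ whose image contains all $m$ vertices $e_1,\dots,e_m$ of the simplex, then setting $P(t) \defin \sum_i \gamma_i(t)\,P_i$ gives a degree-$(2m-3)$ polynomial matrix with $P(t) = \sum_i\gamma_i(t)P_i \succeq 0$ on $[-1,1]$ (a convex combination of psd matrices), and the quadratic-form bound over $t\in[-1,1]$ becomes weaker than the bound over all of $\Delta_m$ yet still includes the vertex constraints, hence is equivalent to membership in the intersection.

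The core of the argument is therefore exactly the ``technical lemma'' advertised in the introduction: for every $m \ge 2$ there is a polynomial curve of degree $2m-3$ lying inside $\Delta_m$ and visiting all of its corners. I would prove this by induction on $m$ (or set it up directly), parametrizing so that $\gamma(t_j) = e_{\sigma(j)}$ at $m$ prescribed nodes $t_1 < \dots < t_m$ in $[-1,1]$; each coordinate $\gamma_i$ must be a degree-$(2m-3)$ polynomial that equals $1$ at one node and $0$ at the other $m-1$ nodes, while staying in $[0,1]$ and summing to $1$ on the whole interval. A clean way to force nonnegativity of each $\gamma_i$ with only degree $2m-3$ is to make it vanish \emph{to second order} at the $m-2$ ``interior'' nodes it is supposed to miss and to first order at the one ``endpoint-type'' node, which accounts for $2(m-2)+1 = 2m-3$ roots; the sign can then be controlled because the polynomial changes sign only at a simple root. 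The constraint $\sum_i\gamma_i \equiv 1$ is an affine condition that I expect to arrange by a suitable choice of leading coefficients, or — as the paper hints — via a game-theoretic / duality argument showing the required coefficient system is consistent; I would present whichever is cleaner.

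The main obstacle is precisely this lemma: producing an explicit (or existence) construction of the simplex-filling curve of the right degree $2m-3$, simultaneously meeting the vertex-interpolation conditions, the nonnegativity of all coordinates on the whole interval, and the partition-of-unity identity — these pull against each other and the degree bound $2m-3$ is tight, so there is no slack. The $m=2$ base case is easy (the curve $t\mapsto(\tfrac{1-t}{2},\tfrac{1+t}{2})$, degree $1 = 2\cdot2-3$), and I would use it to anchor the induction, extending a curve for $\Delta_{m-1}$ into $\Delta_m$ while paying exactly two extra degrees. Once the lemma is in hand, the rest — checking the psd condition (convexity of the psd cone), checking the kernel condition (here one uses compactness of the intersection to rule out a common kernel direction, or simply notes $P(t_1) = P_1$ etc. and compactness of $\bigcap_i\{x^TP_ix\le1\}$ forces $\sum_iP_i \succ 0$ on the relevant subspace), and verifying the set equality — is routine, so I would keep those steps brief.
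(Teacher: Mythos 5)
Your top-level reduction is exactly the paper's: replace the intersection by the family of convex combinations $\sum_i \lambda_i P_i$ over the unit simplex in $\R^m$, find a polynomial curve of degree $2m-3$ inside the simplex that visits all $m$ vertices, and set $P(t) = \sum_i p_i(t) P_i$; the psd condition, the kernel condition (via compactness), and the two set inclusions (partition of unity for one direction, the vertices for the other) then go through just as you say. The shape you propose for each coordinate polynomial --- double roots at the $m-2$ interior nodes it must avoid, simple roots at the endpoint-type nodes, normalized to equal $1$ at its own node --- is also the paper's (Lemma~\ref{lem:poly simplex}, with a small caveat: the \emph{interior} coordinates have two simple roots at $\pm 1$ and $m-3$ double roots, hence degree $2m-4$; only the two endpoint coordinates reach degree $2m-3$).

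The genuine gap is in how you plan to enforce $\sum_i p_i \equiv 1$. Once you commit to the above root structure and the normalization $p_i(t_i)=1$, \emph{every} coefficient of every $p_i$ is determined by the node positions $t_1 = -1 < t_2 < \dots < t_m = 1$; there are no leading coefficients left to adjust. The only remaining freedom is the $m-2$ interior node positions, and the partition-of-unity identity becomes a system of polynomial equations in them whose solvability is precisely the nontrivial content of the lemma. The paper resolves this by choosing the nodes so that each $t_i$ is a critical point of its own $p_i$: it sets up an $m$-player game whose pure-strategy Nash equilibrium (guaranteed by the Debreu--Glicksberg--Fan theorem after a careful quasiconcavity check, or given explicitly by $\pm 1$ together with the roots of the degree-$(m-2)$ Legendre polynomial) has exactly this first-order property. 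Then $1-\sum_\ell p_\ell(t)$ acquires a double root at each interior $t_i$ plus simple roots at $\pm 1$, i.e.\ $2m-2$ roots with multiplicity, while its degree is at most $2m-3$, forcing it to vanish identically. Your fallback of inducting from $\Delta_{m-1}$ to $\Delta_m$ also does not obviously work, because the admissible node configurations for consecutive $m$ (interlacing Legendre roots) are not nested, so a degree-$(2m-5)$ solution cannot simply be extended by two degrees. Without supplying the node-selection mechanism, the proof of the theorem is incomplete, since the bound $2m-3$ leaves no slack to absorb a failure of the identity $\sum_i p_i \equiv 1$.
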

The proof of Theorem~\ref{thm:semiellipsoid} relies on the following lemma, which could be of independent interest. The lemma states that in any dimension $m$, there exists a polynomial curve that stays within the unit simplex in $\mathbb{R}^m$ and visits every corner.

\begin{lemma}[Polynomial tour of the simplex]
\label{lem:poly simplex}
For every integer $m \geq 2$, there exist univariate polynomials $p_1,\dots,p_m$ of degree at most $2m-3$ satisfying
\begin{enumerate}
    \item $p_i(t) \geq 0 \quad \forall t \in [-1,1], \quad i=1,\dots,m$,
    \item $\sum_{i=1}^m p_i(t) = 1 \quad \forall t \in [-1,1]$, and
    \item for every $i = 1,\dots,m, \exists t_{i} \in [-1,1]$ such that $p_i(t_{i}) = 1$.
\end{enumerate}
\end{lemma}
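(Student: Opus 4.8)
The plan is to reformulate the problem as choosing, for each corner $i$ of the simplex, a point $t_i \in [-1,1]$ at which the curve must hit that corner, and then explicitly building polynomials $p_i$ meeting those constraints. Condition (2) says the curve $t\mapsto (p_1(t),\dots,p_m(t))$ lies on the affine hyperplane containing the simplex; condition (1) says it stays in the nonnegative orthant; and condition (3) pins down $m$ interpolation conditions. A clean way to enforce (1)--(3) simultaneously is to look for each $p_i$ in the form
$$
p_i(t) = c_i \prod_{j \neq i} (t - t_j)^2 \cdot r_i(t),
$$
where the $t_1,\dots,t_m \in [-1,1]$ are distinct nodes to be chosen, $c_i > 0$, and $r_i$ is a low-degree correction factor that is nonnegative on $[-1,1]$. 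The double roots at all $t_j$ with $j\neq i$ force $p_i(t_j)=0$ for $j\neq i$, so that $\sum_i p_i(t_j) = p_j(t_j)$, and then (2) at $t=t_j$ forces $p_j(t_j)=1$, giving (3) for free. The squared factors alone have degree $2(m-1)=2m-2$, which is one too many, so I expect the actual construction to be slightly more economical: drop one node from the product, or use the fact that near the two endpoints $t=\pm 1$ one can use linear factors $(1\pm t)$ in place of squares (a linear factor that vanishes at an endpoint of the interval is still sign-definite on $[-1,1]$). This endpoint trick is exactly what shaves the degree from $2m-2$ down to $2m-3$.

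Concretely, I would take the nodes to be $t_1 = -1$, $t_m = 1$, and $t_2,\dots,t_{m-1}$ distinct interior points of $(-1,1)$. For an interior index $i$ ($2\le i\le m-1$) set
$$
p_i(t) = c_i\,(1+t)(1-t)\!\!\prod_{\substack{2\le j\le m-1\\ j\neq i}}\!\!(t-t_j)^2,
$$
which has degree $2 + 2(m-3) = 2m-4 \le 2m-3$ and is manifestly nonnegative on $[-1,1]$, vanishing at every node except $t_i$. For the endpoint indices $i\in\{1,m\}$ set
$$
p_1(t) = c_1\,(1-t)\!\!\prod_{2\le j\le m-1}\!\!(t-t_j)^2, \qquad
p_m(t) = c_m\,(1+t)\!\!\prod_{2\le j\le m-1}\!\!(t-t_j)^2,
$$
each of degree $1 + 2(m-2) = 2m-3$ and nonnegative on $[-1,1]$, vanishing at every node except its own. (One checks the small cases $m=2,3$ separately or sees they are subsumed.) With this ansatz, (1) and (3) hold for any positive choice of the $c_i$, provided (2) holds; so the remaining task is to choose the constants $c_i$ and the interior nodes $t_j$ so that $S(t) := \sum_{i=1}^m p_i(t) \equiv 1$.

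The main obstacle — and where the paper's remark about a ``game-theoretic proof'' presumably enters — is precisely verifying that the constants and nodes can be chosen to make $S(t)\equiv 1$, rather than merely $S(t_j)=1$ at the $m$ nodes. Note $S$ is a polynomial of degree at most $2m-3$, and we already have $S(t_j)=1$ at $m$ points $t_1,\dots,t_m$; that is $m$ interpolation conditions against $2m-2$ free coefficients, so it is far from automatic. One route is a dimension/continuity argument: consider the map sending the free parameters (the $c_i$'s, or equivalently the heights, together with the interior node locations) to the vector of coefficients of $S(t)-1$ of degree $1$ through $2m-3$, and show $0$ is in its image; the $c_i>0$ and $t_j$-distinctness constraints make this a constrained existence problem, which is naturally phrased as finding an equilibrium / fixed point, hence the game-theoretic flavor. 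An alternative, more constructive route I would also try: realize $S(t)-1$ as a polynomial known to vanish at $t_1,\dots,t_m$ and of controlled degree and sign structure, and force it to be identically zero by a parity/sign-count argument (e.g. $S-1$ would have to change sign too many times, or $S$ would have to leave the simplex, contradicting that $\sum p_i$ of nonnegative polynomials summing to the all-important normalization cannot exceed... ) — though I suspect the honest proof goes through the equilibrium existence. Once $S\equiv 1$ is secured, conditions (1)--(3) are immediate from the construction, completing the proof; and feeding these $p_i$ into the semiellipsoid bodies (via $P(t)=\sum_i p_i(t)P_i$) is what yields Theorem~\ref{thm:semiellipsoid}.
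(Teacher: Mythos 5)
Your ansatz is exactly the paper's: $p_1 = c_1(1-t)\prod_j (t-t_j)^2$, $p_m = c_m(1+t)\prod_j(t-t_j)^2$, and $p_i = c_i(1-t^2)\prod_{j\neq i}(t-t_j)^2$ for interior $i$, with endpoint nodes $t_1=-1$, $t_m=1$. But the proposal stops at precisely the point you yourself identify as ``the main obstacle'': you never establish that the interior nodes can be chosen so that $S(t)=\sum_i p_i(t)\equiv 1$, and neither of the two routes you gesture at is carried out. This is a genuine gap, because properties (1) and (3) are essentially free for any choice of distinct nodes, while property (2) is the entire content of the lemma.

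The two missing ideas are as follows. First, the correct condition to impose on the interior nodes is not merely interpolation ($S(t_j)=1$, giving only $m$ conditions against the $2m-2$ coefficients of $S-1$, as you note) but \emph{stationarity}: each interior $t_i$ must be a critical point of its own basis polynomial $q_i(t)=(1-t^2)\prod_{j\notin\{1,i,m\}}(t-t_j)^2$. With the $c_i$ normalized so that $p_i(t_i)=1$, this makes $t_i$ a local maximum of $p_i$, hence a double root of $1-p_i$; since every other $p_k$ already has a double root at $t_i$, each interior node is a double root of $1-S$. Together with the simple roots at $\pm 1$, the polynomial $1-S$ then has $2(m-2)+2=2m-2$ roots counted with multiplicity but degree at most $2m-3$, so it vanishes identically. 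Second, one must prove that nodes satisfying these $m-2$ simultaneous stationarity conditions exist. The paper does this nonconstructively by designing a game whose pure-strategy Nash equilibria (guaranteed by Debreu--Glicksberg--Fan after a quasiconcavity verification) are exactly such node configurations, and separately observes that the roots of the Legendre polynomial of degree $m-2$ work explicitly. Your ``dimension/continuity'' and ``sign-count'' suggestions point in roughly the right direction but do not substitute for either of these steps.
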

The proof of this lemma utilizes a nonconstructive argument inspired by game theory and may be of independent interest as a proof technique. With some additional arguments, we can also make the proof of this lemma constructive (see Remark~\ref{rem:explicit}). We note that all but two of the $m$ polynomials can be taken to have degree at most $2m-4$, but the degree bound of $2m-3$ in Lemma~\ref{lem:poly simplex} is the lowest possible (see Lemma~\ref{lem:min_degree}). We first recall the following result from game theory.

\begin{theorem}[Debreu, Glicksberg, Fan; see, e.g., \cite{debreu}]\label{thm:nash}
Consider a game with $N$ players indexed by $i=1,\dots,N$.
Suppose that for each $i$, player $i$ chooses an action $a_i$ from a nonempty, compact, and convex set $A_i \subseteq \R^M$ and receives a payoff of $u_i(a_1,\dots,a_N)$.
If for each $i$, the function $u_i$ is continuous in $a_1,\dots,a_N$ and quasiconcave in $a_i$ (over $A_i$), then the game possesses a pure-strategy Nash equilibrium; i.e., there exist actions $\bar{a}_1 \in A_1,\dots, \bar{a}_N \in A_N$, 
such that for every $i=1,\dots,N$, we have
\begin{equation}\label{eq: nash}
\hspace{4.8cm}
u_i(\bar{a}_1,\dots,\bar{a}_N) = \max_{a_i\in A_i} u_i(a_i,\bar{a}_{-i}),
\end{equation}
where the index $-i$ represents all players besides player $i$.
\end{theorem}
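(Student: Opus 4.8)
The plan is to deduce this from Kakutani's fixed-point theorem applied to the best-response correspondence. I would set $A \defin A_1 \times \cdots \times A_N \subseteq \R^{MN}$, which is nonempty, compact, and convex because each $A_i$ is. For a profile $a = (a_1,\dots,a_N) \in A$ and a player $i$, define the best-response set $B_i(a) \defin \argmax_{b \in A_i} u_i(b,a_{-i})$, and then the product correspondence $B(a) \defin B_1(a) \times \cdots \times B_N(a)$. The key observation is that a profile $\bar a$ is a fixed point of $B$ (i.e.\ $\bar a \in B(\bar a)$) if and only if it satisfies \eqref{eq: nash}, so the theorem reduces to checking that $B$ satisfies the hypotheses of Kakutani's theorem: nonempty convex values and a closed graph on the nonempty compact convex set $A$.

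First I would check nonemptiness: for fixed $a_{-i}$, the map $b \mapsto u_i(b,a_{-i})$ is continuous on the compact set $A_i$ and hence attains its maximum, so $B_i(a) \neq \emptyset$ and therefore $B(a) \neq \emptyset$. Next, convexity: with $M^* = \max_{b\in A_i} u_i(b,a_{-i})$, the set $B_i(a)$ equals the intersection of the convex set $A_i$ with the upper level set $\{b \in A_i : u_i(b,a_{-i}) \geq M^*\}$, which is convex precisely because $u_i$ is quasiconcave in its own argument; a finite product of convex sets is convex. This is exactly the step where the quasiconcavity hypothesis enters, and quasiconcavity (rather than full concavity) is all that is needed.

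The substantive part is the closed-graph property. I would take sequences $a^{(k)} \to a$ in $A$ and $c^{(k)} \to c$ with $c^{(k)} \in B(a^{(k)})$ and argue $c \in B(a)$ coordinatewise: fixing $i$ and an arbitrary $b \in A_i$, the inequality $u_i(c^{(k)}_i, a^{(k)}_{-i}) \geq u_i(b, a^{(k)}_{-i})$ holds for every $k$, and passing to the limit using the joint continuity of $u_i$ yields $u_i(c_i, a_{-i}) \geq u_i(b, a_{-i})$; since $b \in A_i$ was arbitrary, $c_i \in B_i(a)$, hence $c \in B(a)$. With nonempty convex values and a closed graph established, Kakutani's theorem produces $\bar a \in B(\bar a)$, which is the desired pure-strategy Nash equilibrium.

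I expect the closed-graph verification to be the only genuinely delicate point: it is essential that each $u_i$ is jointly continuous in all of its arguments (so that the convergences $a^{(k)} \to a$ and $c^{(k)} \to c$ can be exploited simultaneously), which is precisely a special case of Berge's maximum theorem; alternatively one could cite Berge directly to obtain upper hemicontinuity of each $B_i$ for free. The analytic engine is Kakutani's fixed-point theorem, which I would invoke as a black box; a route that avoids it is to approximate the game by finite games, apply Nash's theorem, and extract a convergent subsequence of the resulting equilibria, but the Kakutani argument is cleaner and is the standard proof referenced in \cite{debreu}.
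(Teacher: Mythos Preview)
Your argument is the standard and correct proof of the Debreu--Glicksberg--Fan theorem via Kakutani's fixed-point theorem applied to the best-response correspondence; each step (nonemptiness from compactness and continuity, convexity of best-response sets from quasiconcavity, closed graph from joint continuity) is sound. However, there is nothing to compare against: the paper does not prove Theorem~\ref{thm:nash} at all---it is stated as a cited black-box result from \cite{debreu} and then invoked in the proof of Lemma~\ref{lem:poly simplex}. So your proposal is correct, but the paper simply has no proof of its own for this statement.
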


\begin{proof} [Proof of Lemma~\ref{lem:poly simplex}]
We set up a game whose pure-strategy Nash equilibria correspond to roots of polynomials that satisfy the three conditions in the lemma. Consider the following game with players indexed by $i=1,\dots,m$ and action sets $A_i \subseteq [-1,1]$.
Players $1$ and $m$ must respectively play $-1$ and $1$ (i.e., $A_1 = \{-1\}$ and $A_m = \{1\}$), and their payoffs are taken to be $0$.
For $i=2,\dots,m-1$, player $i$ chooses an action $a_i \in [-1,1]$ and receives a payoff of
\begin{equation}\label{eq: u_i def}
\hspace{0.8cm}
u_i(a) = -f(a_{i-1}-a_i) - f(a_i - a_{i+1}) + (1-a_i^2) \prod_{1<j<i} f(a_i-a_j) \prod_{i<j<m} f(a_j-a_i),
\end{equation}
where $a \defin (a_1,\dots,a_m) $ and the function $f$ is defined as
$$
f(x) = \begin{cases} 
      0 & x< 0 \\
      x^2 & x\geq 0 
   \end{cases}.
$$ 
The first two terms in the payoff function incentivize player $i$ to take an action $a_i$ that belongs to the interval $(a_{i-1},a_{i+1})$.
The third term essentially incentivizes player $i$ to increase the geometric mean of the deviations between her action and the actions of the others. This third term will soon be used when we construct the polynomials desired by the lemma. As an example, a plot of $u_3(a_3,a_{-3})$ is shown in Figure~\ref{fig:simplex_plot} for the case $m=5$ and where $a_{-3}$ corresponds to the actions of players $1,2,4,5$ at a pure-strategy Nash equilibrium.
\begin{figure}[ht]
\centering
\includegraphics[width=.45\linewidth]{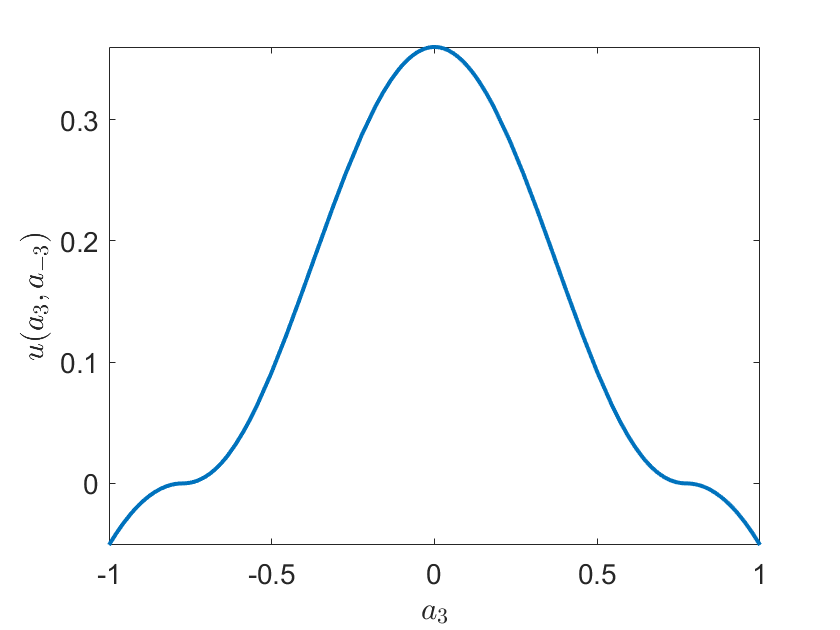}
\caption{The payoff function $u_3(a_3,a_{-3})$ of the third player for the case $m=5$ and where $a_{-3} = (a_1,a_2,a_4,a_5)= (-1, -\sqrt{0.6},\sqrt{0.6},1)$ corresponds to the actions of the other players at a pure-strategy Nash equilibrium for the game appearing in the proof of Lemma~\ref{lem:poly simplex}.}
\label{fig:simplex_plot}
\end{figure}

The functions $u_i$ are continuous in $a$, since the function $f$ is continuous.
Next, we claim that for each $i=2,\dots,m-1$, the function $u_i$ is quasiconcave in $a_i$. Fix some $i$ and $a_{-i}$, and consider $u_i$ as a function of only $a_i$. Consider three cases:
\begin{itemize}
\item $a_{i-1} \geq a_{i+1}$: In this case, the second derivative of $u_i$ with respect to $a_i$ is $-4$ in the interval $(a_{i+1},a_{i-1})$ and $-2$ outside of the interval $[a_{i+1},a_{i-1}]$.
Thus, $u_i$ is strictly concave with respect to $a_i$.

\item $a_{i-1} < a_{i+1}$ and $\max\limits_{j<i} a_j \geq \min\limits_{j>i} a_j$: In this case, $u_i$ is equal to $-f(a_{i-1}-a_i) - f(a_i - a_{i+1})$.
Given that $-f$ is concave, it follows that $u_i$ is concave.

\item $a_{i-1} < a_{i+1}$ and $\max\limits_{j<i} a_j < \min\limits_{j>i} a_j$: In this case, we first claim that $u_i$ is quasiconcave when $a_i \in [\max\limits_{j<i} a_j,\min\limits_{j>i} a_j]$ where $u_i$ is equal to $ (1-a_i^2) \prod_{j\not\in \{1,i,m\}} (a_i-a_j)^2 $.
Since this polynomial is real-rooted, by interlacing, it must have a root between any two roots of its derivative.
Thus, in this interval, $u_i$ is either quasiconvex or quasiconcave in $a_i$ (see, e.g.,~\cite[Section~3.4.2]{boyd_book}).
Since $u_i$ vanishes at $\max\limits_{j<i} a_j$ and $\min\limits_{j>i} a_j$ and is positive in between, it must be quasiconcave over the interval $[\max\limits_{j<i} a_j,\min\limits_{j>i} a_j]$.
To extend the quasiconcavity argument to all of $[-1,1]$, observe that $u_i$ vanishes in the intervals $[a_{i-1},\max\limits_{j<i} a_j]$ and $[\min\limits_{j>i} a_j,a_{i+1}]$.
As $u_i$ is nonpositive outside of the interval $(a_{i-1},a_{i+1})$, our previous argument implies that for any $\alpha > 0$, the $\alpha$-superlevel set of $u_i$ is convex. In addition, for any $\alpha\leq 0$, we have $\{a_i \in \R \mid u_i(a_i,a_{-i}) \geq \alpha \} = [a_{i-1} - \sqrt{-\alpha},a_{i+1} + \sqrt{-\alpha}]$.
Thus, $u_i$ is quasiconcave in $a_i$.
\end{itemize}

\noindent By Theorem~\ref{thm:nash} (applied with $N=m,M=1$), there exist actions $\bar{a}_1,\dots,\bar{a}_{m} \in [-1,1]$, such that for every $i=1,\dots,m$ we have
\begin{equation}
\hspace{4.5cm}
u_i(\bar{a}_1,\dots,\bar{a}_{m}) = \max_{a_i\in [-1,1]} u_i(a_i,\bar{a}_{-i}).
\label{eq:we_got_Nash}
\end{equation}



\noindent Fix such actions $\bar{a} \defin (\bar{a}_1,\dots,\bar{a}_{m})$. We next claim that $\bar{a}_1 < \bar{a}_2 < \dots < \bar{a}_m$.
First observe that by the definition of the payoff function $u_i$ in \eqref{eq: u_i def} and in view of~\eqref{eq:we_got_Nash}, for every $i=2,\dots,m-1$, we have
\begin{itemize}
\itemsep0em
\item if $\bar{a}_{i-1} > \bar{a}_{i+1}$, then $\bar{a}_i \in (\bar{a}_{i+1},\bar{a}_{i-1})$,
\item if $\bar{a}_{i-1} \leq \bar{a}_{i+1}$, then $\bar{a}_i \in [\bar{a}_{i-1}, \bar{a}_{i+1}]$,
\begin{itemize}
\item[$\circ$] if we further have $\max\limits_{j<i} \bar{a}_j = \bar{a}_{i-1} < \bar{a}_{i+1} =  \min\limits_{j>i} \bar{a}_j$, then $\bar{a}_i \in (\bar{a}_{i-1}, \bar{a}_{i+1})$.
\end{itemize}
\end{itemize}

\noindent We first show that $\bar{a}_1\leq \bar{a}_2 \leq \dots \leq \bar{a}_m$ by induction. Since $\bar{a}_1 = -1$ and $\bar{a}_2 \in [-1,1]$, we have $\bar{a}_1 \leq \bar{a}_2$.
Now assume by induction that $\bar{a}_{i-1} \leq \bar{a}_i$ for some $i < m$.
Suppose for the sake of contradiction that $\bar{a}_i > \bar{a}_{i+1}$.
If $\bar{a}_{i-1} > \bar{a}_{i+1}$, we have $\bar{a}_i \in (\bar{a}_{i+1},\bar{a}_{i-1})$ and in particular $\bar{a}_i < \bar{a}_{i-1}$, contradicting the inductive hypothesis.
If $\bar{a}_{i-1} \leq \bar{a}_{i+1}$, we have $\bar{a}_i \in [\bar{a}_{i-1},\bar{a}_{i+1}]$ and in particular, $\bar{a}_i \leq \bar{a}_{i+1}$, a contradiction.
Thus, we have $\bar{a}_i \leq \bar{a}_{i+1}$ and by induction $\bar{a}_1\leq \dots \leq \bar{a}_m$.

Suppose for the sake of contradiction that (at least) two actions are equal.
Let $k \in \{1,\ldots,m\}$ be the smallest index corresponding to an action in a set of equal actions of maximum cardinality.
Either $\bar{a}_k \neq -1$ or $\bar{a}_k \neq 1$. Assume without loss of generality that $\bar{a}_k \neq -1$.
Since $\bar{a}_1 = -1$, we must have $k > 1$.
By minimality of $k$, we must have $\bar{a}_{k-1} < \bar{a}_k$.
Since we have $\bar{a}_1\leq \bar{a}_2 \leq \dots \leq \bar{a}_m$, we have that $\max\limits_{j<k} \bar{a}_j = \bar{a}_{k-1}$ and $\bar{a}_{k+1} =\min\limits_{j>k} \bar{a}_j$. Since we further have $\bar{a}_{k-1} < \bar{a}_k = \bar{a}_{k+1}$, by the last (sub)-bullet above, we must have that $\bar{a}_k \in (\bar{a}_{k-1},\bar{a}_{k+1})$, and in particular $\bar{a}_k < \bar{a}_{k+1}$, contradicting the fact that the equal action set has size at least two.
Thus, $\bar{a}_1 < \dots < \bar{a}_m$.



Going back to the statement of the lemma, we define, for $i=1,\dots,m$, $t_i = \bar{a}_i$ and
$$p_i(t) = \frac{q_i(t)}{q_i(t_i)},$$
where
$$q_1(t) = (1-t)\prod_{j\not \in \{1,m\}} (t-t_j)^2, \quad \quad \quad q_m(t) = (1+t)\prod_{j\not \in \{1,m\}} (t-t_j)^2,$$ $$q_i(t) = (1-t^2)\prod_{j\not \in \{1,i,m\}} (t-t_j)^2 \quad \quad i=2,\dots,m-1.$$

Note that since no two members of $\{\bar{a}_1,\dots,\bar{a}_m\}$ coincide, we have $q_i(t_i) \neq 0$ for $i=1,\dots,m$, and therefore the polynomials $p_i(t)$ are well defined. With this construction, property 3 of Lemma~\ref{lem:poly simplex} is immediate from the definition of $p_i(t)$ as $p_i(t_i) = 1$. Property 1 is also straightforward to check since each $p_i(t)$ is a positive scaling of a product of squares multiplied by either $(1-t)$,$(1+t)$, or $(1-t^2)$ which are all nonnegative for $t\in[-1,1]$.

It remains to verify property 2. Fix an arbitrary index $i \in \{2,\dots, m-1 \}$.
First observe that since $\bar{a}_1 \leq \bar{a}_2 \leq \dots \leq \bar{a}_m$, for $a_i \in (\bar{a}_{i-1},\bar{a}_{i+1})$, the payoff function $u_i(a_i,\bar{a}_{-i}) = q_i(a_i)$.
Then by~\eqref{eq:we_got_Nash}, the point $t_i=\bar{a}_i$ is a local maximum of $q_i(t)$ and thus also of $p_i(t)$.
Combined with the fact that $p_i(t_i)=1$, it follows that the point $t_i$ is a double root of $1-p_i(t)$.
Additionally, for $k\in \{1,\dots,i-1,i+1,\dots,m \}$, $t_i$ is a double root of $p_k(t)$ by construction.
Hence, $t_i$ is a double root of $1-\sum_{\ell=1}^m p_\ell(t)$.
The polynomial $1-\sum_{\ell=1}^m p_\ell(t)$ also has roots at $-1$ and $1$.
Thus, counting with multiplicity, $1-\sum_{\ell=1}^m p_\ell (t)$ has $2m-2$ roots.
However, since for $\ell=1,\dots,m$, each $p_\ell(t)$ has degree at most $2m-3$, the degree of $1-\sum_{\ell=1}^m p_\ell (t)$ is at most $2m-3$.
Since a degree-$d$ nonzero polynomial has at most $d$ roots, it follows that $1-\sum_{\ell=1}^m p_\ell(t)$ is identically zero.
Therefore, $\sum_{\ell=1}^m p_\ell(t) = 1$ for all $t$.
\end{proof}
Figure~\ref{fig:simplex_plot2} demonstrates polynomials $p_i(t)$ that satisfy the three conditions of Lemma~\ref{lem:poly simplex} for the case $m=5$. These polynomials were found by a numerical search for a Nash equilibrium for the game set up in the proof of Lemma~\ref{lem:poly simplex}. Interestingly, this equilibrium corresponds to the roots of the Legendre polynomial of degree 3. The next lemma proves that the roots of Legendre polynomials~(see, e.g., \cite[Chapter 22]{abramowitz_stegun} for a definition) always provide a Nash equilibrium to our game.\footnote{In view of the explicit construction of a Nash equilibrium in Lemma~\ref{lem:explicit_Nash}, we do not actually need to invoke Theorem~\ref{thm:nash} for our purposes. However, we include Theorem~\ref{thm:nash} to highlight that the game theory approach could potentially be used more generally to prove existence of polynomials with certain desired properties even when a Nash equilibrium cannot be constructed explicitly.}
\begin{figure}[ht]
\centering
\includegraphics[width=.45\linewidth]{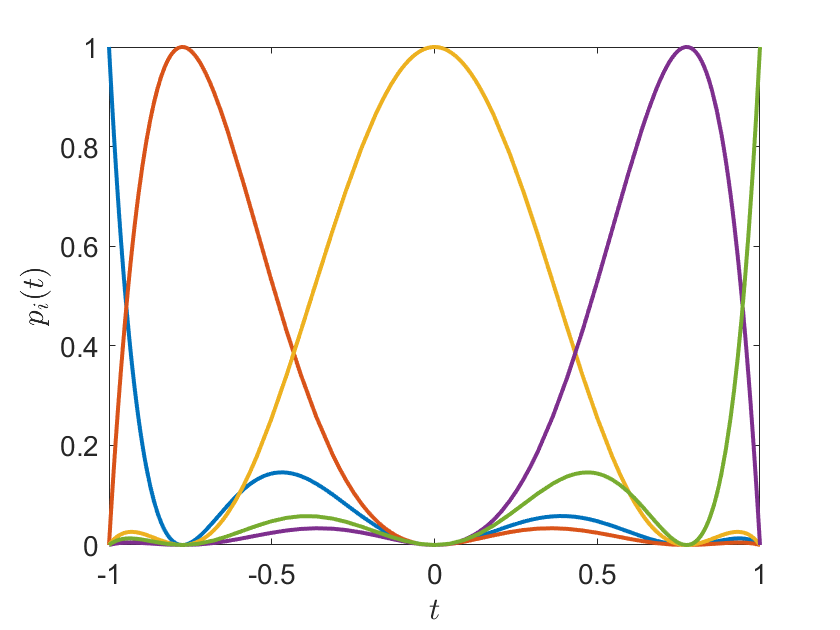}
\caption{Polynomials $p_i(t)$ satisfying the three conditions of Lemma~\ref{lem:poly simplex} for $m=5$.}
\label{fig:simplex_plot2}
\end{figure}

\begin{lemma}
\label{lem:explicit_Nash}
Let $\bar{a}_1=-1$, $\bar{a}_m=1$, and let $\bar{a}_2,\dots,\bar{a}_{m-1}$ be the roots of the Legendre polynomial of degree $m-2$, arranged in ascending order.
Then the actions $\bar{a}_1,\dots,\bar{a}_m$ form a pure-strategy Nash equilibrium of the game defined in the proof of Lemma~\ref{lem:poly simplex}.
\end{lemma}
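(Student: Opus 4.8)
The plan is to verify the Nash equilibrium condition \eqref{eq: nash} directly for the profile $\bar a=(\bar a_1,\dots,\bar a_m)$ in the game built in the proof of Lemma~\ref{lem:poly simplex}. Players $1$ and $m$ have singleton action sets and constant payoff, so nothing need be checked for them; fix an interior player $i\in\{2,\dots,m-1\}$. The standard fact that the degree-$(m-2)$ Legendre polynomial has $m-2$ simple roots in $(-1,1)$ gives $\bar a_1=-1<\bar a_2<\dots<\bar a_{m-1}<1=\bar a_m$. First I would do the sign bookkeeping for the function $f$ in \eqref{eq: u_i def}: when $a_i\in[\bar a_{i-1},\bar a_{i+1}]$ the first two terms of $u_i(a_i,\bar a_{-i})$ vanish and every surviving factor of the product term equals a genuine square, so $u_i(a_i,\bar a_{-i})=q_i(a_i)$, where $q_i(t)\defin(1-t^2)\prod_{j\notin\{1,i,m\}}(t-\bar a_j)^2$; outside this interval $u_i(a_i,\bar a_{-i})\le 0$, as already observed in the proof of Lemma~\ref{lem:poly simplex}. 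Since $q_i(\bar a_i)=(1-\bar a_i^2)\prod_{j\neq i}(\bar a_i-\bar a_j)^2>0$ and $u_i(\bar a)=q_i(\bar a_i)$, the equilibrium condition for player $i$ reduces to showing that $q_i$ is maximized over $[\bar a_{i-1},\bar a_{i+1}]$ at $\bar a_i$.

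Next I would pass to the Legendre polynomial. Put $n=m-2$, let $L$ be the degree-$n$ Legendre polynomial with leading coefficient $c$, so $L(t)=c\prod_{j=2}^{m-1}(t-\bar a_j)$, and set $Q(t)\defin L(t)/(t-\bar a_i)=c\prod_{j\notin\{1,i,m\}}(t-\bar a_j)$. Then $q_i=c^{-2}g$ with $g(t)\defin(1-t^2)Q(t)^2$, so it suffices to maximize $g$ on $[\bar a_{i-1},\bar a_{i+1}]$. I would then check that $\bar a_i$ is a critical point of $g$: since $g'=2Q\big((1-t^2)Q'-tQ\big)$ and $Q(\bar a_i)=L'(\bar a_i)\neq 0$, it is enough to verify $(1-\bar a_i^2)Q'(\bar a_i)=\bar a_i\,Q(\bar a_i)$; writing $L=(t-\bar a_i)Q$ gives $L'(\bar a_i)=Q(\bar a_i)$ and $L''(\bar a_i)=2Q'(\bar a_i)$, and substituting these into the Legendre differential equation $(1-t^2)L''-2tL'+n(n+1)L=0$ evaluated at $\bar a_i$ (where $L(\bar a_i)=0$) yields exactly the required identity.

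The crux is to upgrade ``critical point'' to ``global maximizer on $[\bar a_{i-1},\bar a_{i+1}]$'', by a root count. The polynomial $g$ has degree $2n$, is nonnegative on $[-1,1]$, and its zeros there are $-1$, $1$, and the $n-1$ roots of $Q$ (all Legendre roots other than $\bar a_i$), the latter $n-1$ being double zeros; in particular $g(\bar a_i)=(1-\bar a_i^2)L'(\bar a_i)^2>0$, so $\bar a_i$ is not among them. These $n+1$ distinct points partition $[-1,1]$ into $n$ subintervals, on each of which $g$ is positive in the interior and zero at the endpoints, hence attains an interior maximum; so $g'$ vanishes somewhere inside each subinterval. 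On the other hand $g'$ has degree $2n-1$ and a simple zero at each of the $n-1$ double zeros of $g$ (a one-line local computation), so those $n-1$ zeros together with one per subinterval already account for all $2n-1$ zeros of $g'$: thus $g'$ has exactly one zero inside each open subinterval and none elsewhere. The subinterval whose interior contains $\bar a_i$ is precisely $(\bar a_{i-1},\bar a_{i+1})$, and since $\bar a_i$ is a critical point lying in it, $\bar a_i$ is the unique critical point there, hence the maximizer of $g$ — equivalently of $q_i$ — on $[\bar a_{i-1},\bar a_{i+1}]$. Therefore $\max_{a_i\in[-1,1]}u_i(a_i,\bar a_{-i})=q_i(\bar a_i)=u_i(\bar a)$, which is \eqref{eq: nash}; this holds for every interior $i$, so $\bar a$ is a pure-strategy Nash equilibrium. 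I expect all of this except the root-counting paragraph to be routine bookkeeping with $f$ and the Legendre ODE; the one delicate point is the simplicity of the zeros of $g'$ at the double zeros of $g$, which is what makes the count tight.
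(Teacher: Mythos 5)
Your proof is correct and follows essentially the same route as the paper's: reduce to maximizing $(1-a_i^2)\prod_{j\notin\{1,i,m\}}(a_i-\bar a_j)^2$ over $(\bar a_{i-1},\bar a_{i+1})$, verify that $\bar a_i$ is a critical point via the Legendre differential equation, and argue that the derivative has only one root in that interval. The only (minor) difference is in the last step: the paper invokes the interlacing/quasiconcavity observation already made in the proof of Lemma~\ref{lem:poly simplex} (a real-rooted polynomial's derivative has at most one root between consecutive roots), whereas you carry out an explicit global root count of $g'$; both are valid.
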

\begin{proof}
Since the action sets of players $1$ and $m$ are singletons, it is trivial that~\eqref{eq: nash} holds for $i =1,m$. Fix an arbitrary index $i \in \{2,\dots, m-1 \}$.
From the definition of the payoff function $u_i$ in~\eqref{eq: u_i def}, $u_i(a_i,\bar{a}_{-i})$ is only positive in the interval $(\bar{a}_{i-1},\bar{a}_{i+1})$, where it is equal to the polynomial function $(1-a_i^2) \prod_{j\not\in \{1,i,m\}} (a_i-\bar{a}_j)^2 $.
Thus, 
$$ \max_{a_i\in [-1,1]} u_i(a_i,\bar{a}_{-i}) = \max_{a_i\in (\bar{a}_{i-1},\bar{a}_{i+1})} u_i(a_i,\bar{a}_{-i}) = \max_{a_i\in (\bar{a}_{i-1},\bar{a}_{i+1})} (1-a_i^2) \prod_{j\not\in \{1,i,m\}} (a_i-\bar{a}_j)^2. $$
This polynomial is zero at $\bar{a}_{i-1}$ and $\bar{a}_{i+1}$ and positive in the interval $(\bar{a}_{i-1},\bar{a}_{i+1})$.
As argued in the proof of Lemma~\ref{lem:poly simplex}, the derivative of this polynomial has at most one root over $(\bar{a}_{i-1},\bar{a}_{i+1})$.
Therefore, any root of the derivative in this interval attains the maximum.
Thus, in order to show that $$
u_i(\bar{a}_i,\bar{a}_{-i}) = \max_{a_i\in (\bar{a}_{i-1},\bar{a}_{i+1})} (1-a_i^2) \prod_{j\not\in \{1,i,m\}} (a_i-\bar{a}_j)^2,
$$
if suffices to show that
$$
\frac{d}{da_i} \left( (1-a_i^2) \prod_{j\not\in \{1,i,m\}} (a_i-\bar{a}_j)^2\middle) \right|_{a_i=\bar{a}_i} = 0.
$$
To this end, we recall that due to the properties of the Legendre polynomials, the polynomial function $\ell_{m-2}(t) \defin \prod_{j \notin \{1,m\}} (t-\bar{a}_j)$ satisfies the Legendre differential equation (see, e.g.,~\cite[22.6.13]{abramowitz_stegun}):
$$
 (1-t^2)\ell_{m-2}''(t) - 2t\ell_{m-2}'(t)+(m-2)(m-1)\ell_{m-2}(t)=0 \quad \forall t.
$$
Observe that
\begin{align*}
\ell_{m-2}'(t) &= \prod_{j \notin \{1,2,m\}} (t-\bar{a}_j) + \prod_{j \notin \{1,3,m\}} (t-\bar{a}_j) + \quad \cdots \quad + \prod_{j \notin \{1,m-1,m\}} (t-\bar{a}_j) = \sum_{k=2}^{m-1} \prod_{j \notin \{1,k,m\}} (t-\bar{a}_j),\\
\ell_{m-2}''(t) &=  2 \sum_{k_1=2}^{m-2} \:\: \sum_{k_2=k_1+1}^{m-1} \:\: \prod_{j \notin \{1,k_1,k_2,m\}} (t-\bar{a}_j).
\end{align*}
Hence, we have
$$\ell_{m-2}'(\bar{a}_i) = \prod_{j \notin \{1,i,m\}} (\bar{a}_i-\bar{a}_j) \quad \text{and} \quad \ell_{m-2}''(\bar{a}_i) = 2 \sum_{k \notin \{1,i,m\}} \prod_{j \notin \{1,i,k,m\}} (\bar{a}_i-\bar{a}_j).$$
One can then check that
\begin{align*}
& \frac{d}{da_i} \left( (1-a_i^2) \prod_{j\not\in \{1,i,m\}} (a_i-\bar{a}_j)^2\middle) \right|_{a_i=\bar{a}_i}\\
&= -2 \bar{a}_i \left( \prod_{j\not \in \{1,i,m\}} (\bar{a}_i - \bar{a}_j)^2 \right) + (1-\bar{a}_i^2) \left( \sum_{k\not \in \{1,i,m\}} 2(\bar{a}_i-\bar{a}_k) \prod_{j\not \in \{1,i,k,m\}} (\bar{a}_i-\bar{a}_j)^2 \right) \\
&= \left( \prod_{j\not \in \{1,i,m\}} (\bar{a}_i-\bar{a}_j) \right) \left( -2\bar{a}_i \prod_{j\not \in \{1,i,m\}} (\bar{a}_i - \bar{a}_j) + 2(1-\bar{a}_i^2) \sum_{k\not \in \{1,i,m\}} \prod_{j\not \in \{1,i,k,m\}} (\bar{a}_i-\bar{a}_j) \right)\\
&= \left( \prod_{j\not \in \{1,i,m\}} (\bar{a}_i-\bar{a}_j) \right) \left( - 2\bar{a}_i\ell_{m-2}'(\bar{a}_i) + (1-\bar{a}_i^2)\ell_{m-2}''(\bar{a}_i) \right) \\
&= \left( \prod_{j\not \in \{1,i,m\}} (\bar{a}_i-\bar{a}_j) \right) \left( (1-\bar{a}_i^2)\ell_{m-2}''(\bar{a}_i) - 2\bar{a}_i\ell_{m-2}'(\bar{a}_i)+(m-2)(m-1)\ell_{m-2}(\bar{a}_i) \right) = 0,
\end{align*}
where the last equality follows from the Legendre differential equation stated above.
\end{proof}

\begin{remark}\label{rem:explicit}
Combining Lemma~\ref{lem:explicit_Nash} and the arguments from the proof of Lemma~\ref{lem:poly simplex}, one can explicitly construct polynomials $p_1,\dots,p_m$ satisfying the requirements of Lemma~\ref{lem:poly simplex} as follows.
Let $t_1 = -1, t_m = 1$, and let $t_2,\dots,t_{m-1}$ be the roots of the Legendre polynomial of degree $m-2$.
Then, we can have
\[
p_1(t) = \left(\frac{1-t}{2}\right)\prod_{j\not \in \{1,m\}} \left(\frac{t-t_j}{1 + t_j}\right)^2, \quad
p_m(t) = \left(\frac{1+t}{2}\right)\prod_{j\not \in \{1,m\}} \left(\frac{t-t_j}{1 - t_j}\right)^2, \quad \text{and}
\]
\[
p_i(t) = \left(\frac{1-t^2}{1-t_i^2}\right)\prod_{j\not\in \{1,i,m\}} \left(\frac{t-t_j}{t_i - t_j}\right)^2 \quad i=2,\dots,m-1.
\]
\end{remark}

\begin{lemma}\label{lem:min_degree}
For $m \geq 2$, let $\{p_1,\dots,p_m\}$ be any set of polynomials satisfying the three properties of Lemma~\ref{lem:poly simplex}.
Then at least one of these polynomials must have degree at least $2m-3$.
\end{lemma}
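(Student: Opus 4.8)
The plan is a count of root multiplicities. Let $t_1,\dots,t_m\in[-1,1]$ be points witnessing property~3, so $p_i(t_i)=1$. I would first record two elementary consequences of properties~1 and~2. The $t_i$ are pairwise distinct: if $t_i=t_j$ with $i\neq j$, then $1=\sum_{\ell}p_\ell(t_i)\geq p_i(t_i)+p_j(t_i)=2$, a contradiction. By the same reasoning, at each $t_i$ every other polynomial vanishes: $p_j(t_i)=0$ for all $j\neq i$. Finally, each $p_j$ is not identically zero (since $p_j(t_j)=1$), so $\deg p_j$ is a finite upper bound on the total multiplicity of the roots of $p_j$.

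Next comes the key dichotomy on the location of a root $t_i$ of $p_j$ (for $j\neq i$). If $t_i\in(-1,1)$, then since $p_j\geq 0$ on a neighborhood of $t_i$ and $p_j(t_i)=0$, the point $t_i$ is a local minimum of $p_j$, hence a root of even multiplicity, in particular of multiplicity at least $2$. If instead $t_i\in\{-1,1\}$, then $t_i$ is merely a root of $p_j$ of multiplicity at least $1$. Now set $E=\{i:t_i\in\{-1,1\}\}$; since $\{-1,1\}$ has two elements and the $t_i$ are distinct, $|E|\leq 2$. I would then choose the index $j$ as follows: if $E=\emptyset$, let $j$ be arbitrary; otherwise let $j\in E$. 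In the case $E=\emptyset$, all $m-1$ roots $\{t_i:i\neq j\}$ of $p_j$ are interior, so $\deg p_j\geq 2(m-1)=2m-2$. If $|E|=1$, the chosen $j$ is the unique endpoint index, so again all $m-1$ of those roots are interior and $\deg p_j\geq 2m-2$. If $|E|=2$, then among $\{t_i:i\neq j\}$ exactly one is an endpoint and the remaining $m-2$ are interior, whence $\deg p_j\geq 1+2(m-2)=2m-3$. In every case $\deg p_j\geq 2m-3$, which proves the lemma.

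I do not anticipate a genuine obstacle; the one spot that needs care is the multiplicity bookkeeping at the endpoints $\pm 1$. A zero in the open interval of a polynomial nonnegative on $[-1,1]$ is forced to have even multiplicity, whereas a zero at $\pm 1$ carries no such constraint, and it is precisely this asymmetry that makes $2m-3$, rather than $2m-2$, the right bound. This matches the explicit construction in Lemma~\ref{lem:explicit_Nash}, where the two extreme interpolation points $t_1=-1$ and $t_m=1$ sit at the endpoints and the corresponding polynomials $p_1$ and $p_m$ attain degree exactly $2m-3$.
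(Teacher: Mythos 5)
Your proof is correct and follows essentially the same route as the paper's: both arguments count root multiplicities of a suitably chosen $p_j$, using that an interior zero of a polynomial nonnegative on $[-1,1]$ has multiplicity at least two while a zero at $\pm 1$ contributes only one. The only difference is presentational — you case-split on the number of endpoint indices, whereas the paper folds all cases into a single choice of indices $i,j$ — so there is nothing to add.
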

\begin{proof}
The claim is trivial for $m=2$, so assume $m \geq 3$.
Let $t_1,\dots,t_m\in [-1,1]$ be any set of points such that $p_i(t_i) =1$ for $i=1,\dots,m$. 
The properties of Lemma~\ref{lem:poly simplex} imply that the points $t_1,\dots,t_m$ are distinct and that $p_i(t_j)=0$ for $i\neq j$.
If $-1$ or $1$ are in the set $\{t_1,\dots,t_m\}$, fix $i$ such that $t_i \in \{-1,1\}$;
otherwise choose $i$ arbitrarily.
If both $-1$ and $1$ are in the set $\{t_1,\dots,t_m\}$, fix $j$ such that $\{t_i,t_j\} = \{-1,1\}$; otherwise choose any $j \neq i$.
For each $k \not \in \{i,j\}$, since $p_i(t_k) = 0$, $p_i(t) \geq 0$ for $t \in [-1,1]$, and $t_k \not \in \{-1,1\}$, $t_k$ must be a root of $p_i(t)$ of multiplicity at least two.
Furthermore, since $t_j$ is a root of $p_i(t)$, counting with multiplicities, $p_i(t)$ must have at least $2(m-2) + 1 = 2m-3$ roots.
\end{proof}

We now shift our focus back to GEs and show that they can represent any compact finite intersection of semiellipsoids.

\begin{proof}[Proof of Theorem~\ref{thm:semiellipsoid}]
Let $T \subset \R^n$ be a compact finite intersection of $m$ semiellipsoids.
Then there exist matrices $P_1,\dots, P_m \in S_+^{n}$ such that $T = \bigcap_{i=1}^m \{x \in \R^n \mid x^T P_i x \leq 1\}$.
Let $\{p_1,\dots,p_m\}$ be polynomials of degree at most $2m-3$ satisfying the three properties in Lemma~\ref{lem:poly simplex}.
Define the polynomial matrix $P(t)$ and the set $\bar{T}$ as follows:
$$P(t) = \sum_{i=1}^m p_i(t) P_i \quad \text{and} \quad \bar{T} = \bigcap_{t \in [-1,1]} \{x \in \R^n \mid x^TP(t)x \leq 1\}.$$
We claim that $\bar{T} = T$ and that $\bar{T}$ is a GE-$d$ (with $d \leq 2m-3$).
To see that $\bar{T} \subseteq T$, recall first that, by Lemma~\ref{lem:poly simplex}, for each $i = 1,\dots,m, \exists t_{i} \in [-1,1]$ such that $p_i(t_{i}) = 1$.
Then we have
$$\bar{T} = \bigcap_{t \in [-1,1]} \{x \in \R^n \mid x^TP(t)x \leq 1\} \\
 \subseteq \bigcap_{i=1}^m \{x \in \R^n \mid x^TP(t_i)x \leq 1\} \\
 = \bigcap_{i=1}^m \{x \in \R^n \mid x^TP_ix \leq 1\} = T.$$
To see that $T \subseteq \bar{T}$, take $x \in T$ and $t \in [-1,1]$.
Recall by Lemma~\ref{lem:poly simplex} that $\sum_{i=1}^m p_i(t) = 1$.
Then we have
$$
x^TP(t)x = \sum_{i=1}^m p_i(t) x^T P_i x \leq \max_{i=1,\dots,m} x^T P_i x \leq 1.
$$
Thus, $x\in \bar{T}$. Finally, we note that $\bar{T}$ is a valid GE-$d$.
The psd condition holds since for $i = 1,\dots,m$, $p_i(t) \geq 0$ for all $t \in [-1,1]$, and $P_i \succeq 0$.
The kernel condition holds since $\bar{T}$ is compact (as $T$ is compact).
\end{proof}

\begin{corollary}
\label{cor:ellipsoids and polytopes}
Every symmetric full-dimensional polytope and every finite intersection of co-centered ellipsoids is a GE.
\end{corollary}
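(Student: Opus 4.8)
The plan is to deduce both assertions from Theorem~\ref{thm:semiellipsoid} by writing each set as a compact intersection of finitely many semiellipsoids. It will be convenient to recall the remark just before Theorem~\ref{thm:semiellipsoid} that a compact semiellipsoid is automatically a GE-$0$, which disposes of the degenerate single-constraint cases.

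First I would treat a symmetric full-dimensional polytope $P \subset \R^n$. Since $P = -P$ and $0 \in \operatorname{int}(P)$, its facets occur in antipodal pairs, so $P$ has $2m$ facets for some positive integer $m$ and can be described as $P = \{x \in \R^n \mid |a_i^T x| \le 1,\ i = 1,\dots,m\}$ for suitable nonzero vectors $a_1,\dots,a_m$ (each antipodal pair of facets contributing the two inequalities $-1 \le a_i^T x \le 1$, normalized to have right-hand side $1$). The point is that $|a_i^T x| \le 1$ is the same as $x^T (a_i a_i^T) x \le 1$ and $a_i a_i^T \succeq 0$, so $P = \bigcap_{i=1}^m \{x \mid x^T (a_i a_i^T) x \le 1\}$ is an intersection of $m$ semiellipsoids, and it is compact because a polytope is bounded. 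If $m \ge 2$, Theorem~\ref{thm:semiellipsoid} shows that $P$ is a GE-$d$ with $d \le 2m-3$; if $m = 1$ (which forces $n = 1$), then $P$ is a compact semiellipsoid and hence a GE-$0$.

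Next I would treat a finite intersection $Q = \bigcap_{i=1}^m \mathcal{E}_i$ of co-centered ellipsoids. Translating the common center to the origin, each $\mathcal{E}_i = \{x \mid x^T P_i x \le 1\}$ with $P_i \succ 0$; in particular $P_i \succeq 0$, so each $\mathcal{E}_i$ is a semiellipsoid, and $Q$ is compact since it is closed and contained in the bounded set $\mathcal{E}_1$. Applying Theorem~\ref{thm:semiellipsoid} when $m \ge 2$, and using that a compact semiellipsoid is a GE-$0$ when $m = 1$, gives that $Q$ is a GE.

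I do not expect a genuine obstacle here: the substance lies entirely in Theorem~\ref{thm:semiellipsoid} (and its supporting Lemma~\ref{lem:poly simplex}). The only steps needing a moment of care are the elementary reduction of a centrally symmetric polytope to an intersection of symmetric slabs --- each of which is the $1$-sublevel set of the rank-one psd quadratic form $x \mapsto x^T (a_i a_i^T) x$ --- together with the bookkeeping for the degenerate case $m = 1$, which is covered by the earlier observation that compact semiellipsoids are GE-$0$s.
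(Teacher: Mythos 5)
Your proposal is correct and follows essentially the same route as the paper: rewrite the polytope as $\bigcap_{i=1}^m \{x \mid x^T(a_i a_i^T)x \le 1\}$ and the intersection of co-centered ellipsoids as a compact intersection of semiellipsoids, then invoke Theorem~\ref{thm:semiellipsoid}. Your explicit treatment of the degenerate $m=1$ case via the GE-$0$ observation is a small extra bit of care the paper leaves implicit, but it does not change the argument.
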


\begin{proof}
The claim for a finite intersection of co-centered ellipsoids is immediate from Theorem~\ref{thm:semiellipsoid}.
Let $T \subset \R^n$ be a symmetric full-dimensional polytope.
By translation, we may assume $T$ is symmetric around the origin; from this and full-dimensionality, it follows that $T$ contains the origin in its interior. Thus, we may write 
$$T = \{ x \in \R^n \mid |a_i^T x| \leq 1 \quad i=1,\dots,m\}$$
for some vectors $a_1,\dots,a_m \in \R^n$.
Therefore, the description of $T$ can be rewritten as
$$T = \bigcap_{i=1}^m \{ x \in \R^n \mid x^T (a_i a_i^T) x \leq 1 \}.$$
As $T$ is evidently a (compact) finite intersection of semiellipsoids, the claim follows from Theorem~\ref{thm:semiellipsoid}.
\end{proof}

We now show that every symmetric convex body can be approximated arbitrarily well by a GE.

\begin{theorem}
\label{thm:approximation_by_GE}
There exists $\varepsilon_0 > 0$ such that for any $\varepsilon \in (0,\varepsilon_0)$ and for any symmetric convex body $C \subset \mathbb{R}^n$, there is a GE-$d$ $\mathcal{E}_d$, with $d \leq 2\left( \frac{1}{2\sqrt{\varepsilon}} \log(\frac{1}{\varepsilon}) \right)^n - 3$, satisfying
\begin{equation}\label{eq:convex.body.approximation}
\hspace{5.7cm}
\mathcal{E}_d \: \subseteq \: C \: \subseteq \: (1+\varepsilon) \mathcal{E}_d.
\end{equation}
\end{theorem}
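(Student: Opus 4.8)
The plan is to reduce the statement to a quantitative polytopal approximation of $C$ and then invoke Corollary~\ref{cor:ellipsoids and polytopes} (via Theorem~\ref{thm:semiellipsoid}) to turn the approximating polytope into a GE of controlled degree.

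First I would use the behaviour of GEs under invertible linear maps. If $\mathcal{E}_d$ is a GE-$d$ defined by $P(t)$ and $A$ is invertible, then $A\mathcal{E}_d=\{y\in\R^n\mid y^T(A^{-T}P(t)A^{-1})y\le1\ \forall t\in[-1,1]\}$, and $A^{-T}P(t)A^{-1}$ is still a symmetric polynomial matrix of degree $d$ that satisfies the psd and kernel conditions (congruence preserves positive semidefiniteness and maps the common kernel to $A$ of the common kernel). Since the inclusions $\mathcal{E}_d\subseteq C\subseteq(1+\varepsilon)\mathcal{E}_d$ are also preserved under $A$, we may replace $C$ by $AC$ for any invertible $A$. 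Applying John's theorem in its origin-symmetric form, we may thus assume $B\subseteq C\subseteq\sqrt{n}\,B$, where $B$ is the Euclidean unit ball.

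Second — and this is the heart of the matter — I would appeal to the theory of approximation of convex bodies by polytopes. The key input I would establish (or cite, e.g., via Bronshtein's theorem) is: for $\varepsilon$ below a threshold $\varepsilon_0=\varepsilon_0(n)$, every origin-symmetric $C$ with $B\subseteq C\subseteq\sqrt{n}\,B$ admits an origin-symmetric polytope $\mathcal{P}$ with at most $M\le 2\left(\frac{1}{2\sqrt{\varepsilon}}\log\frac1\varepsilon\right)^n$ facets such that $\mathcal{P}\subseteq C\subseteq(1+\varepsilon)\mathcal{P}$. Concretely one takes a polytopal approximation of $C$ to within Hausdorff distance of order $\varepsilon$ built from supporting hyperplanes at the centres of a covering of (a slight inflation of) $\partial C$ by $O_n(\varepsilon^{-(n-1)/2})$ caps of ``height $\sim\varepsilon$'' and ``width $\sim\sqrt{\varepsilon}$''; one then symmetrizes (at most doubling the facet count) and converts the Hausdorff bound into the multiplicative sandwich using $C\supseteq B$. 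The explicit bound $\frac{1}{2\sqrt{\varepsilon}}\log\frac1\varepsilon$, with its logarithmic factor and its $n$-th (rather than $(n-1)$-st) power, absorbs all dimension-dependent constants once $\varepsilon<\varepsilon_0(n)$. I expect the main technical obstacle to be precisely this covering estimate: obtaining the quadratic $\sqrt{\varepsilon}$-scaling of the number of facets — rather than the easy $\varepsilon$-scaling that a naive net of directions or a naive grid yields — requires genuinely using the curvature of the approximating shapes; the remaining reductions are bookkeeping.

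Finally, $\mathcal{P}$ lies in $C$ (hence is bounded) and contains $\frac{1}{1+\varepsilon}C\supseteq\frac{1}{1+\varepsilon}B$ (hence is full-dimensional), so $\mathcal{P}$ is an origin-symmetric full-dimensional polytope. Writing its $2m=M$ facets in antipodal pairs, $\mathcal{P}=\bigcap_{i=1}^m\{x\in\R^n\mid x^T(a_ia_i^T)x\le1\}$ is a compact intersection of $m$ semiellipsoids, so by Theorem~\ref{thm:semiellipsoid} (equivalently, Corollary~\ref{cor:ellipsoids and polytopes}) it is a GE-$d$ with $d\le 2m-3=M-3\le 2\left(\frac{1}{2\sqrt{\varepsilon}}\log\frac1\varepsilon\right)^n-3$. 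Taking $\mathcal{E}_d:=\mathcal{P}$ gives $\mathcal{E}_d\subseteq C\subseteq(1+\varepsilon)\mathcal{E}_d$ with the asserted degree bound; undoing the linear change of variables from the first step (which preserves both the GE property and the degree) completes the proof.
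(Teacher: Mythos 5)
Your overall skeleton (approximate $C$ by a symmetric polytope with controlled facet count, then invoke Theorem~\ref{thm:semiellipsoid}/Corollary~\ref{cor:ellipsoids and polytopes}) is exactly the paper's, and your final bookkeeping $d\le 2m-3\le 2\bigl(\tfrac{1}{2\sqrt{\varepsilon}}\log\tfrac1\varepsilon\bigr)^n-3$ is correct. Where you diverge is in how the polytopal approximation is obtained. The paper does not construct it: it passes to the polar dual $C^o$, applies Barvinok's ``thrifty approximation'' theorem \cite[Corollary~1.2]{barvinok} to get a symmetric polytope $T$ with at most $\bigl(\tfrac{1}{2\sqrt{\varepsilon}}\log\tfrac1\varepsilon\bigr)^n$ \emph{vertices} satisfying $T\subseteq C^o\subseteq(1+\varepsilon)T$, and then dualizes so that the vertex bound becomes a facet bound for $\mathcal{E}_d\defin\tfrac{1}{1+\varepsilon}T^o$. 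This buys two things your route does not: the threshold $\varepsilon_0$ from Barvinok is a universal constant, matching the theorem's quantifier order (``there exists $\varepsilon_0$ \dots for any $C\subset\R^n$''), whereas your John-position-plus-cap-covering argument only yields $\varepsilon_0=\varepsilon_0(n)$; and it avoids having to prove the covering estimate at all. Conversely, your reduction via John's theorem and the linear invariance of GEs (which you verify correctly — congruence by $A^{-T}(\cdot)A^{-1}$ preserves degree, the psd condition, and the kernel condition) is unnecessary in the paper's approach. The one real soft spot in your write-up is that the step you yourself call ``the heart of the matter'' — the facet-count bound $2\bigl(\tfrac{1}{2\sqrt{\varepsilon}}\log\tfrac1\varepsilon\bigr)^n$ with the $\sqrt{\varepsilon}$-scaling — is sketched but not proved; it is true (Bronshtein--Ivanov/Dudley-type bounds of order $C(n)\varepsilon^{-(n-1)/2}$ are indeed absorbed by the stated expression once $\varepsilon<\varepsilon_0(n)$), but as written your proof outsources its main quantitative content just as the paper does, only to a harder-to-pin-down source and with a weaker (dimension-dependent) threshold. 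If you want the theorem exactly as stated, cite Barvinok on the polar dual instead.
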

\begin{proof}
Let $C^o \defin \{y\in \R^n \mid \langle x,y \rangle \leq 1\}$ be the polar dual of $C$. Since $C^o$ is a convex body, by \cite[Corollary 1.2]{barvinok}, there exists $\varepsilon_0 >0$ ($\varepsilon_0(\frac{1}{2})$ in the language of that paper) such that for any $\varepsilon \in (0,\varepsilon_0)$, there exists a symmetric (full-dimensional) polytope $T \subset \R^n$ with at most $\left(\frac{1}{2\sqrt{\varepsilon}} \log(\frac{1}{\varepsilon})\right)^n$ vertices that satisfies $T \subseteq C^o \subseteq (1+\varepsilon) T$.
By duality, we have $\frac{1}{1+\varepsilon} T^o \subseteq C \subseteq T^o$.
Then, the set $\mathcal{E}_d \defin \frac{1}{1+\varepsilon} T^o$ satisfies~\eqref{eq:convex.body.approximation}.
Since $\mathcal{E}_d$ is a scaling of the polar dual of $T$, it is symmetric, full-dimensional, and the number of its facets is at most $\left(\frac{1}{2\sqrt{\varepsilon}} \log(\frac{1}{\varepsilon})\right)^n$ (see, e.g., \cite[Chap. 4]{barvinok_book}). By~Corollary~\ref{cor:ellipsoids and polytopes}, $\mathcal{E}_d$ is a GE-$d$ with $d \leq 2\left(\frac{1}{2\sqrt{\varepsilon}} \log(\frac{1}{\varepsilon})\right)^n - 3$.
\end{proof}

\subsection{GEs under polar duality}\label{subsec:GEPloarity}
Considering that the polar dual of a polytope (resp. an ellipsoid) is always a polytope (resp. an ellipsoid), it is natural to wonder what one can say about the polar dual of a GE.
In particular, for a GE-$d$, is its polar dual also a GE-$d$?
Is its polar dual a GE at all? For $d=0$, the answer to the first question (and hence the second question) is clearly positive. In this section, we show that already when $d=1$, three different scenarios may arise.

\subsubsection{A GE-$1$ whose polar dual is a GE-$1$}
Recall that the $\ell_1$ and $\ell_\infty$ balls are polar dual to each other.
In dimension two, both of these sets are GE-$1$s, thus they are a pair of GE-$1$s which are polar dual to each other.
\begin{figure}[ht]
\centering
\begin{subfigure}{.4\textwidth}
  \centering
  \includegraphics[width=.45\linewidth]{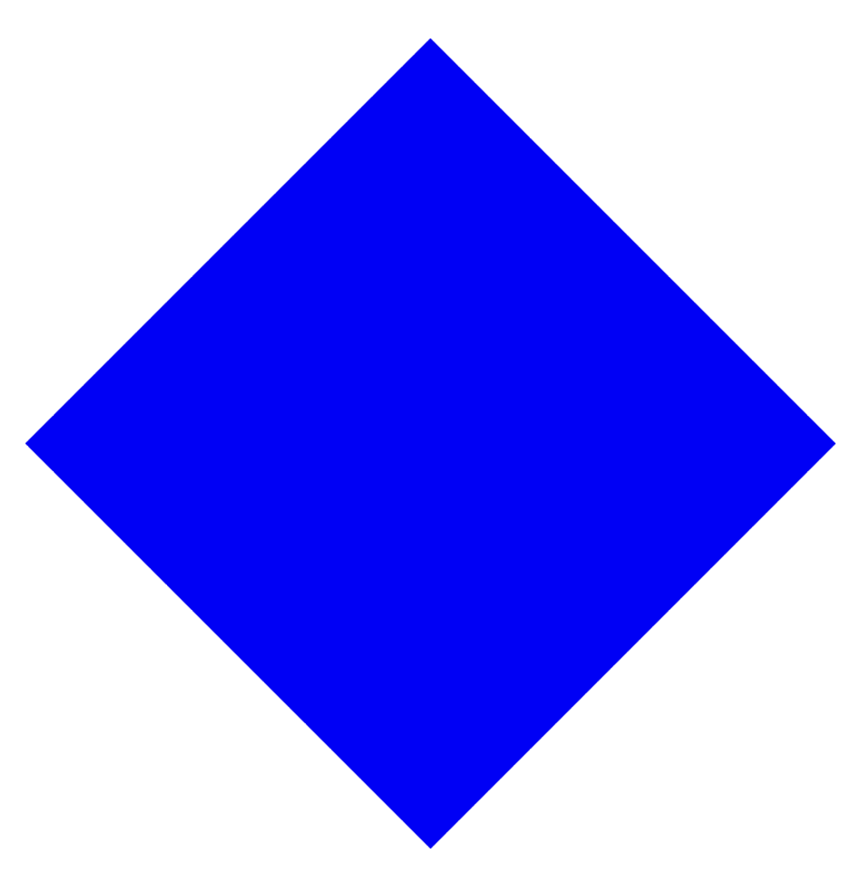}
  \hspace{-0.8cm}
\end{subfigure}
\begin{subfigure}{.4\textwidth}
  \centering
  \includegraphics[width=.45\linewidth]{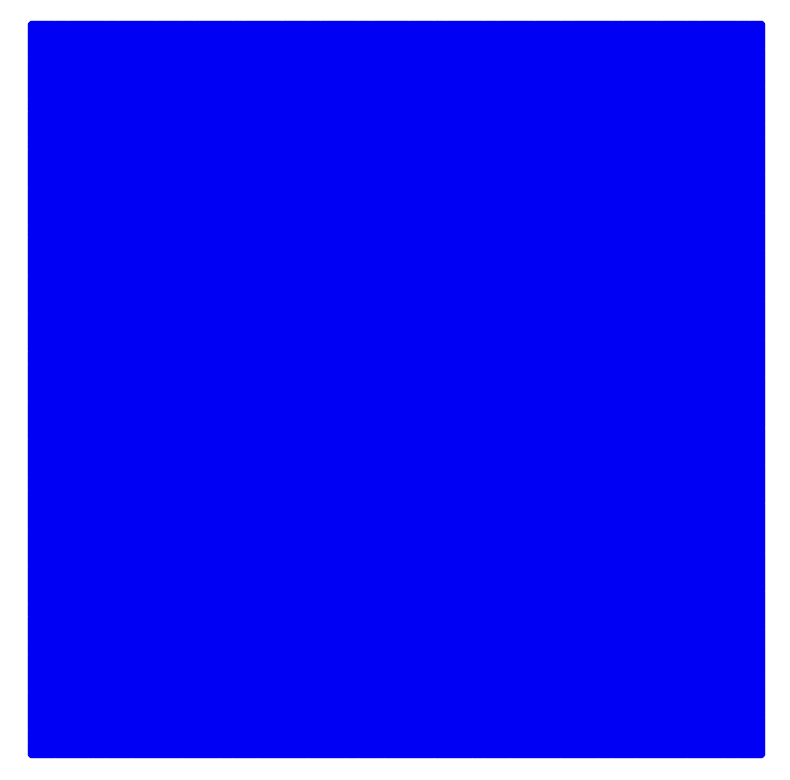}
  \hspace{-1.5cm}
\end{subfigure}
\caption{A GE-$1$ whose polar dual is a GE-$1$.}
\end{figure}

\subsubsection{A GE-$1$ whose polar dual is a GE but not a GE-$1$}
Consider the following GE-1:
$$
\Omega = \left\{ x\in \R^2 \,\middle\vert\, x^T \begin{bmatrix} \frac{3-t}{2} & 0 \\ 0 & \frac{1+t}{2}\end{bmatrix} x \leq 1 \quad \forall t \in [-1,1] \right\}.
$$
One can check that the polar dual is given by
$$
\Omega^\circ = \left\{ x\in \R^2 \,\middle\vert\, x^T \begin{bmatrix} \frac{1}{2} & \frac{t}{2} \\ \frac{t}{2} & \frac{2-t^2}{2}\end{bmatrix} x \leq 1 \quad \forall t \in [-1,1] \right\},$$
and hence it is a GE-2. However, $\Omega^\circ$ is not the intersection of two semiellipsoids and therefore, following the proof of Proposition~\ref{prop:finite_intersec}, it is not a GE-$1$.
\begin{figure}[ht]
\centering
\begin{subfigure}{.4\textwidth}
  \centering
  \includegraphics[width=.35\linewidth]{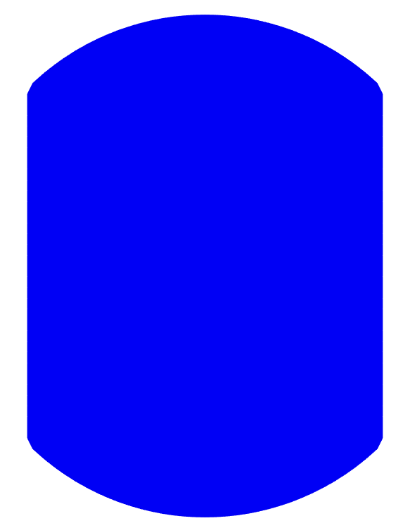}
  \hspace{-0.8cm}
\end{subfigure}
\begin{subfigure}{.4\textwidth}
  \centering
  \includegraphics[width=.5\linewidth]{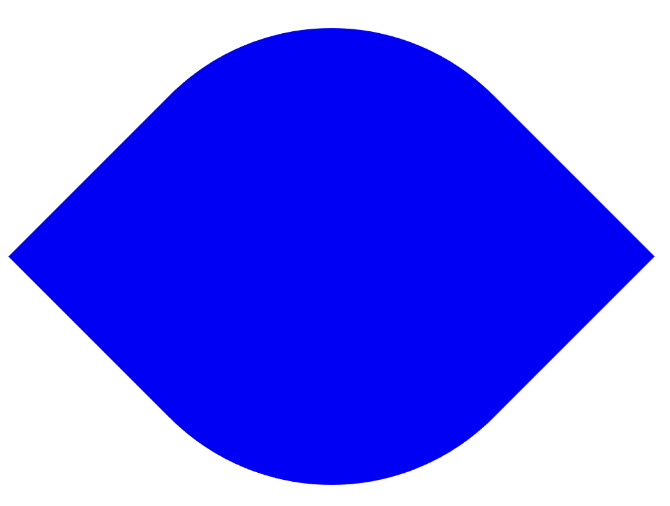}
  \hspace{-1.5cm}
\end{subfigure}
\caption{A GE-$1$ (left) whose polar dual (right) is a GE but not a GE-$1$.}
\end{figure}

\subsubsection{A GE-$1$ whose polar dual is not a GE}
\begin{theorem}\label{thm:not GE}
The polar dual of the GE-1
$$
\Omega = \left\{ x\in \R^2 \,\middle\vert\, x^T \begin{bmatrix} \frac{5-3t}{2} & 0 \\ 0 & \frac{5+3t}{2}\end{bmatrix} x \leq 1 \quad \forall t \in [-1,1] \right\},
$$
is not a GE.
\end{theorem}
\begin{figure}[ht]
\centering
\begin{subfigure}{.4\textwidth}
  \centering
  \includegraphics[width=.4\linewidth]{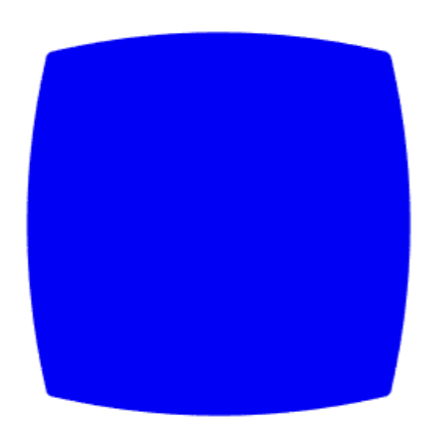}
  \hspace{-0.8cm}
\end{subfigure}
\begin{subfigure}{.4\textwidth}
  \centering
  \includegraphics[width=.45\linewidth]{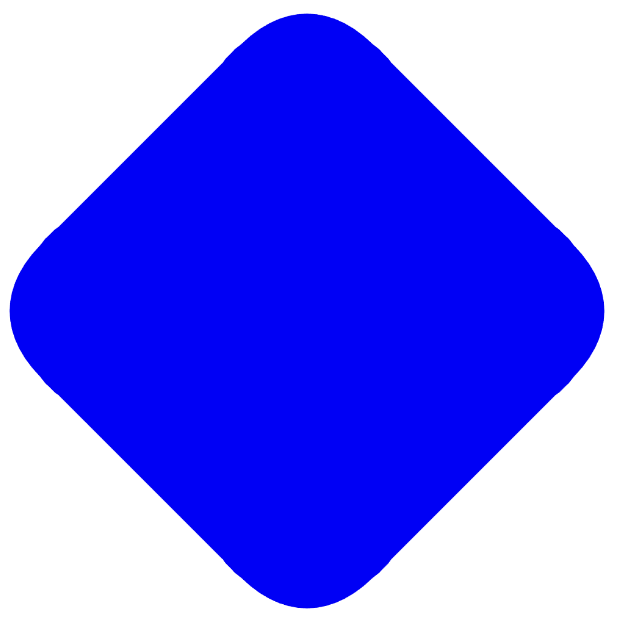}
  \hspace{-1.5cm}
\end{subfigure}
\caption{A GE-$1$ (left) whose polar dual (right) is not a GE.}
\end{figure}

\begin{proof}
Note that
\begin{equation}\label{eq: omega polar}
\begin{aligned}
\Omega^\circ & = \left\{ x\in \R^2 \,\middle\vert\, x^T \begin{bmatrix} \frac{5-3t}{2} & 0 \\ 0 & \frac{5+3t}{2}\end{bmatrix} x \leq 1 \quad \forall t \in [-1,1] \right\}^\circ \\
& = \left( \left\{ x\in \R^2 \,\middle\vert\, x^T \begin{bmatrix} 4 & 0 \\ 0 & 1\end{bmatrix} x \leq 1 \right\} \bigcap \left\{ x\in \R^2 \,\middle\vert\, x^T \begin{bmatrix} 1 & 0 \\ 0 & 4\end{bmatrix} x \leq 1 \right\} \right)^\circ \\
& = \text{conv}\left( \left\{ x\in \R^2 \,\middle\vert\, x^T \begin{bmatrix} \frac{1}{4} & 0 \\ 0 & 1\end{bmatrix} x \leq 1 \right\} \bigcup \left\{ x\in \R^2 \,\middle\vert\, x^T \begin{bmatrix} 1 & 0 \\ 0 & \frac{1}{4}\end{bmatrix} x \leq 1 \right\} \right),
\end{aligned}
\end{equation}
where the last equality follows from the fact that the polar dual of the intersection of two closed sets is the convex hull of the union of their polar duals.
Now suppose for the sake of contradiction that $\Omega^\circ$ is a GE defined by some polynomial matrix $P(t) = \begin{bmatrix}
    p_1(t) & p_2(t) \\ p_2(t) & p_3(t)
\end{bmatrix}$.
Let us define a function $f: S^2 \mapsto \R$ as
$$
f(P) = \begin{bmatrix}
    \max_{x\in \R^2} \quad & x^T P x \\
    \text{s.t.} \quad &x^T \begin{bmatrix} \frac{1}{4} & 0 \\ 0 & 1\end{bmatrix} x \leq 1
\end{bmatrix}.
$$
We claim that $f(P(t)) = 1$ for infinitely many values of $t \in [-1,1]$.
First observe that for any $t \in [-1,1]$, we have $f(P(t)) \leq 1$.
Indeed, if there was a vector $\bar{x}$ such that $\bar{x}^T P(t) \bar{x} > 1$ and $\bar{x}^T \begin{bmatrix} \frac{1}{4} & 0 \\ 0 & 1\end{bmatrix} \bar{x} \leq 1$, we would have that $ \left\{ x\in \R^2 \,\middle\vert\, x^T \begin{bmatrix} \frac{1}{4} & 0 \\ 0 & 1\end{bmatrix} x \leq 1 \right\} \not \subseteq \Omega^\circ$, a contradiction in view of~\eqref{eq: omega polar}.
Therefore, to prove the claim, it suffices to show that for infinitely many values of $t \in [-1,1]$, there exists an $x$ such that $x^T \begin{bmatrix} \frac{1}{4} & 0 \\ 0 & 1\end{bmatrix} x \leq 1$ and $x^T P(t) x = 1$.
Let us define the set $A = \left\{ x\in \Omega^\circ \,\middle\vert\, x^T \begin{bmatrix} \frac{1}{4} & 0 \\ 0 & 1\end{bmatrix} x = 1 \right\}$, which has infinite cardinality.
Since $A$ is a subset of the boundary of $\Omega^\circ$, for any $x \in A$, there must exist some $t(x) \in [-1,1]$ such that $x^T P(t(x)) x = 1$.
If the set $\{t(x) \mid x\in A\}$ was finite, then there would be at least one $\bar{t} \in [-1,1]$ such that $\bar{t} = t(x)$ for infinitely many $x \in A$.
Then we would have that the curves $\left\{ x\in \R^2 \,\middle\vert\, x^T \begin{bmatrix} \frac{1}{4} & 0 \\ 0 & 1\end{bmatrix} x = 1 \right\}$ and $\left\{ x\in \R^2 \,\middle\vert\, x^T P(\bar{t}) x = 1 \right\}$ share an infinite number of points, which would imply that $P(\bar{t}) = \begin{bmatrix} \frac{1}{4} & 0 \\ 0 & 1\end{bmatrix}$.
This would imply that 
$$\Omega^\circ = \bigcap_{t \in [-1,1]} \left\{ x\in \R^2 \,\middle\vert\, x^T P(t) x \leq 1 \right\} \subseteq \left\{ x\in \R^2 \,\middle\vert\, x^T \begin{bmatrix} \frac{1}{4} & 0 \\ 0 & 1\end{bmatrix} x \leq 1 \right\},$$ which in view of~\eqref{eq: omega polar} is a contradiction.
Since we have established that the set $\{t(x) \mid x\in A\}$ is infinite, it follows that for infinitely many values $t \in [-1,1]$, there exists an $x$ such that $x^T \begin{bmatrix} \frac{1}{4} & 0 \\ 0 & 1\end{bmatrix} x \leq 1$ and $x^T P(t) x = 1$.
As mentioned above, this proves the claim that for infinitely many values of $t \in [-1,1]$, we have $f(P(t)) = 1$.

Now let us observe by the change of variable $y = \begin{bmatrix}
    \frac{1}{2} & 0 \\ 0 & 1
\end{bmatrix} x$ that we can write
$$
f(P) = \begin{bmatrix}
    \max_{y\in \R^2} \quad & y^T \begin{bmatrix}
    2 & 0 \\ 0 & 1
\end{bmatrix} P \begin{bmatrix}
    2 & 0 \\ 0 & 1
\end{bmatrix} y \\
    \text{s.t.} \quad &y^Ty \leq 1
\end{bmatrix} = \left\|\begin{bmatrix}
    2 & 0 \\ 0 & 1
\end{bmatrix} P \begin{bmatrix}
    2 & 0 \\ 0 & 1
\end{bmatrix} \right\|.
$$
For $t \in [-1,1]$, since $P(t) \succeq 0$, we have that $$f(P(t)) = \left\|\begin{bmatrix}
    2 & 0 \\ 0 & 1
\end{bmatrix} P(t) \begin{bmatrix}
    2 & 0 \\ 0 & 1
\end{bmatrix} \right\| = \left\|\begin{bmatrix}
    4p_1(t) & 2p_2(t) \\ 2p_2(t) & p_3(t)
\end{bmatrix} \right\| = \lambda_{\max} \left( \begin{bmatrix}
    4p_1(t) & 2p_2(t) \\ 2p_2(t) & p_3(t)
\end{bmatrix}\right).$$
Therefore, if $f(P(t))=1$, then $1$ is an eigenvalue of $\begin{bmatrix}
    4p_1(t) & 2p_2(t) \\ 2p_2(t) & p_3(t)
\end{bmatrix}$ and thus a root of its characteristic equation.
Then we have $\begin{vmatrix}
    4p_1(t) -1 & 2p_2(t) \\ 2p_2(t) & p_3(t) -1
\end{vmatrix} = (4p_1(t) -1)(p_3(t) -1) - (2p_2(t))^2=0$.
Since this occurs for infinitely many $t$, we must have that the polynomial 
\begin{equation}
\label{eq: zero}
\hspace{4.7cm}
(4p_1(t) -1)(p_3(t) -1) - (2p_2(t))^2
\end{equation} 
is identically zero.

By defining a function $g: S^2 \mapsto \R$ as
$$
g(P) = \begin{bmatrix}
    \max_{x\in \R^2} \quad & x^T P x \\
    \text{s.t.} \quad &x^T \begin{bmatrix} 1 & 0 \\ 0 & \frac{1}{4}\end{bmatrix} x \leq 1
\end{bmatrix}
$$
and repeating the same arguments as before we can conclude that the polynomial 
\begin{equation}
\label{eq: zero 2}
\hspace{4.7cm}
(p_1(t) -1)(4p_3(t) -1) - (2p_2(t))^2
\end{equation} 
is also identically zero.

Equating \eqref{eq: zero} and \eqref{eq: zero 2}, we observe that $(4p_1(t) -1)(p_3(t) -1) \equiv (p_1(t) -1)(4p_3(t) -1)$ and thus $p_1(t) \equiv p_3(t)$.
Now replacing $p_3(t)$ with $p_1(t)$ in \eqref{eq: zero} or \eqref{eq: zero 2}, we have $(4p_1(t) -1)(p_1(t) -1) \equiv (2p_2(t))^2$.
After some manipulation, we have $(p_1(t)-\frac{5}{8}+p_2(t))(p_1(t)-\frac{5}{8}-p_2(t))\equiv \frac{9}{64}$.
Since the product of two polynomials is a constant only if both polynomials are constants, there are must exist scalars $k_1,k_2$ such that $p_1(t)-\frac{5}{8}+p_2(t) \equiv k_1$ and $p_1(t)-\frac{5}{8}-p_2(t) \equiv k_2$.
From this we have that $p_2(t) \equiv \frac{k_1-k_2}{2}$ and $p_1(t) \equiv \frac{5}{8} + \frac{k_1+k_2}{2}$.
Thus we have that the matrix $P(t)$ is a constant which would imply that $\Omega^\circ$ is an ellipsoid, a contradiction.
\end{proof}

We remark that since the polar dual of an SDP-representable set is SDP-representable~(see, e.g., \cite[Theorem 3.5]{NetzerPlaumann}), Theorem~\ref{thm:not GE} also implies that there are SDP-representable symmetric convex bodies that are not GEs.

\section{Applications}
\label{sec:applications}

In this section, we present four potential applications involving GEs.

\subsection{Minimum-variance portfolio optimization with time-varying covariance}\label{subsec:markowitz}

In its simplest form, the minimum-variance portfolio optimization problem in finance takes the form 
\begin{align*}
\hspace{6.2cm}
\min_{x \in \R^n} \quad & x^T \Sigma x \\
\text{s.t.} \quad & \mathbf{1}^Tx = 1, \quad x \geq 0,
\end{align*}
where, for $i=1,\ldots,n$, the $i^{\text{th}}$ entry of the portfolio $x \in \R^n$ determines the fraction of our wealth that we invest in asset $i$. Here, $\mathbf{1}$ denotes the vector of all ones, the nonnegativity constraint on $x$ is entrywise, and $\Sigma \in S_+^n$ is the covariance matrix of the underlying asset returns\footnote{The \emph{return} of asset $i$ over a period is given by $\frac{p^i_{\text{end}} - p^i_{\text{beg}}}{p^i_{\text{beg}}}$, where $p^i_{\text{beg}}$ (resp. $p^i_{\text{end}}$) is the price of the asset at the beginning (resp. the end) of the period.} over a fixed time period and is assumed to be known.\footnote{In the Markowitz variant of this problem, one has an additional linear constraint that imposes a lower bound on the expected return of the portfolio. Such a constraint can be easily incorporated into our framework. However, our focus here is on the variance of the portfolio since we want to highlight how time-varying versions of convex quadratic programs can give rise to GEs.}

We consider a generalization of this problem where we commit to a portfolio at the start of a period (say at time $t=-1$), but allow ourselves to liquidate the portfolio at any time within a given horizon (say at anytime $t\in[-1,1]$).
In this setting, the covariance matrix is no longer constant and depends on the liquidation time.
In other words, there is a function $\Sigma: [-1,1] \rightarrow S_+^n$, such that $\Sigma(t)$ is the covariance matrix of the asset returns at time $t$.
Then to find a portfolio that minimizes the worst-case variance of the returns over all possible liquidation times, we must solve the problem
\begin{equation*}
\begin{aligned}
\hspace{6cm}
\min_{x \in \R^n} \quad & \max_{t \in [-1,1]} \quad x^T \Sigma(t) x \\
\text{s.t.} \quad & \mathbf{1}^Tx = 1, \quad x \geq 0.
\end{aligned}
\end{equation*}
It is unreasonable to assume access to the function $\Sigma(t)$.
Instead, we assume we have noisy measurements $\Sigma_1,\dots,\Sigma_m \in S^n$ of this function at times $t_1,\dots,t_m \in [-1,1]$ during similar past periods.
Given this data, one might consider minimizing the worst-case variance with respect to the measurements. This corresponds to solving
\begin{equation}\label{eq:sigma_i_formulation}
\hspace{6cm}
\begin{aligned}
\min_{x \in \R^n} \quad & \max_{i=1,\dots,m} \quad x^T \hat{\Sigma}_i x \\
\text{s.t.} \quad & \mathbf{1}^Tx = 1, \quad x \geq 0,
\end{aligned}
\end{equation}
where $\hat{\Sigma}_i$ is the nearest (e.g., in Frobenius distance) psd matrix to $\Sigma_i$. 

As a model-based alternative to \eqref{eq:sigma_i_formulation}, we propose to fit a polynomial matrix $P(t)$ to the measurements $\Sigma_1,\dots, \Sigma_m$ by solving
\begin{equation}\label{eq:fit_Pt}
\hspace{5cm}
\begin{aligned}
\min_{P \in S_d^n[t]} \quad & \sum_{i=1}^{m} \left\| P\left( t_i \right) - \Sigma_i \right\|^2_F \\
\text{s.t.} \quad & P(t) \succeq 0 \quad \forall t \in [-1,1],
\end{aligned}
\end{equation}
where $S_d^n[t]$ denotes the set of symmetric $n \times n$ univariate polynomial matrices of degree (at most) $d$ and  $\|\cdot\|_F$ denotes the Frobenius norm. Note that the constraint in~\eqref{eq:fit_Pt} ensures that our model produces a valid covariance matrix at all times. In view of Proposition~\ref{prop:sdp_transformation}, this constrained regression problem can be formulated as an SDP.

Let $P^*(t)$ be an optimal solution to~\eqref{eq:fit_Pt}. To find our portfolio, we propose to solve the following problem:
\begin{equation}\label{eq:GE_formulation}
\begin{aligned}
\hspace{6cm}
\min_{x \in \R^n} \quad & \max_{t \in [-1,1]} \quad x^T P^*(t) x \\
\text{s.t.} \quad & \mathbf{1}^Tx = 1, \quad x \geq 0.
\end{aligned}
\end{equation}
This problem searches for a portfolio which has minimum GE-$d$-norm defined by $P^*(t)$ (see~\eqref{eq:GE_d_norm} in Section~\ref{sec:GE_definition} for a definition). By Theorem~\ref{thm:sdp_representable}, problem~\eqref{eq:GE_formulation} can be formulated as an SDP.

\subsubsection{Numerical example}\label{subsubsec:markowitz_example}

As a numerical example, we consider a universe of $n=10$ assets such that the covariance matrix of their returns at time $t$ is given by the (non-polynomial) function
\begin{equation}\label{eq: sigma t}
\hspace{2.7cm}
\Sigma(t) = 6\sin{(t+1)} A_1A_1^T + 2(1-t^2) A_2A_2^T + (t+1)^2 A_3A_3^T,
\end{equation}
where the entries of the matrices $A_1, A_2, A_3 \in \R^{10 \times 2}$ were generated independently and according to the standard Gaussian distribution. Note that $\Sigma(t)\succeq 0$ for $t\in [-1,1]$ and that $\Sigma(-1)=0$ as there is no uncertainty in the return at the beginning of the period. As described before, we do not assume access to the function $\Sigma(t)$, but instead to $m=500$ noisy measurements $\Sigma_1,\dots, \Sigma_{500}$ of it at equally spaced times $t_1,\ldots, t_{500}$ in the interval $[-1,1]$.
More specifically, we let $\Sigma_i = \Sigma(t_i) + Z_i$, where $Z_i$ is a $10 \times 10$ symmetric matrix with upper triangular entries drawn independently from a Gaussian with mean zero and standard deviation $30$.

In Figure~\ref{fig:markowitz_example}, we compare the variance of four different portfolios with respect to the true covariance matrix $\Sigma(t)$ in~\eqref{eq: sigma t}. The curves correspond to $x_*^T\Sigma(t)x_*$, where $x_*\in\{ x_*^\eqref{eq:sigma_i_formulation}, x_*^{\text{GE}-0}, x_*^{\text{GE}-1}, x_*^{\text{GE}-2} \}$. Here, $x_*^\eqref{eq:sigma_i_formulation}$ is optimal to \eqref{eq:sigma_i_formulation}, and $x_*^{\text{GE}-0}, x_*^{\text{GE}-1}, x_*^{\text{GE}-2}$ are optimal to~\eqref{eq:GE_formulation} with $d=0,1,2$, respectively. Note that to find the latter three portfolios, we first solve~\eqref{eq:fit_Pt} for $d=0,1,2$ to obtain the optimal matrix $P^*(t)$ that goes as input to \eqref{eq:GE_formulation}.
We observe that the GE-based portfolios have lower variance throughout time than the portfolio coming from the solution to~\eqref{eq:sigma_i_formulation}.
In this example, the improvement seems to saturate at degree $d=2$. The worst-case variances of the four portfolios (with respect to the true covariance matrix $\Sigma(t)$) are reported in Table~\ref{table: markowitz}.
\begin{figure}[ht]
\centering
\includegraphics[width=.7\textwidth]{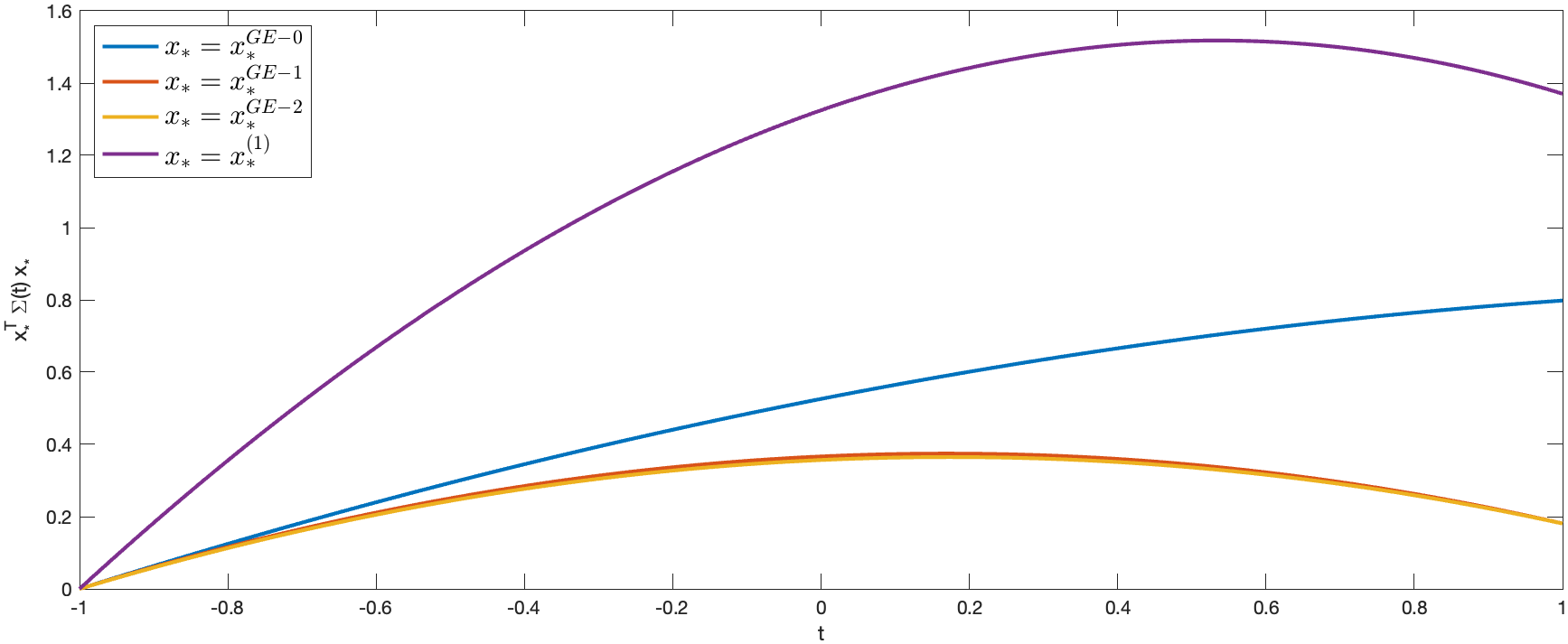}
\caption{Comparing the variance of the four portfolios discussed in Section~\ref{subsubsec:markowitz_example} with respect to the true covariance matrix $\Sigma(t)$.}
\label{fig:markowitz_example}
\end{figure}

\begin{table}[ht]
\centering
\begin{tabular}{| m{2.8cm} || m{2.1cm} | m{2.1cm} | m{2.1cm} | m{2.1cm} ||} 
 \hline
 & \vspace{0.1cm} \hspace{0.15cm} $x_* = x_*^\eqref{eq:sigma_i_formulation}$ & \vspace{0.1cm} \hspace{0.04cm} $x_* = x_*^{\text{GE}-0}$ & \vspace{0.1cm} \hspace{0.03cm} $x_* = x_*^{\text{GE}-1}$ & \vspace{0.1cm} \hspace{0.01cm} $x_* = x_*^{\text{GE}-2}$ \\ [0.3ex] 
 \hline\hline
\vspace{0.16cm} $\max\limits_{t \in [-1,1]} x_*^T \Sigma(t) x_*$ & \vspace{0.26cm} \hspace{0.38cm} 1.5179 & \vspace{0.26cm} \hspace{0.38cm} 0.7981 & \vspace{0.26cm} \hspace{0.38cm} 0.3745 & \vspace{0.26cm} \hspace{0.38cm} 0.3649 \\  [2.4ex] 
 \hline
\end{tabular}
\caption{The worst-case variance $\max\limits_{t \in [-1,1]} x_*^T \Sigma(t) x_*$ for four different portfolios $x_*$.}
\label{table: markowitz}
\end{table}

\subsection{Joint spectral radius and stability of switched linear systems}\label{subsec:jsr}


In this section, we show that asymptotically stable switched linear systems always admit a GE as an invariant set. Equivalently, we show that if the joint spectral radius of a set of matrices is less than one, then there must exist a ``contracting'' GE-$d$-norm.


Recall that the \emph{spectral radius} $\rho(A)$ of a matrix $A \in \R^{n \times n}$ is defined as 
$$\rho(A)=\lim_{k \rightarrow \infty} ||A^k||^{1/k},$$
where $||\cdot||$ is any matrix norm.
This quantity coincides with the maximum of the absolute values of the eigenvalues of $A$. The discrete-time linear dynamical system $x_{k+1}=Ax_k$, where $x_k \in \R^n$ is the state of the system at time $k\in\mathbb{N}$, is said to be \emph{asymptotically stable} if for any starting state $x_0 \in \mathbb{R}^n$, $x_k \rightarrow 0$ as $k \rightarrow \infty.$ It is straightforward to establish that a linear system is asymptotically stable if and only if $\rho(A)<1$. 

The \emph{joint spectral radius} $\rho(\mathcal{A})$ of a set of matrices $\mathcal{A} \defin \{A_1,\ldots,A_m\} \subseteq \R^{n \times n}$ is defined as
$$\rho(\mathcal{A})\defin\lim_{k\rightarrow \infty} \max_{\sigma \in \{1,\ldots,m\}^k}||A_{\sigma_k} \ldots A_{\sigma_1}||^{1/k},$$
where $||\cdot||$ is any matrix norm~\cite{RotaStrang}. Note that when $\mathcal{A}$ contains a single matrix, the definition of the joint spectral radius (JSR) simplifies to that of the spectral radius. However, computing the JSR is significantly more challenging than computing the spectral radius; for example the problem of testing if $\rho(\mathcal{A}) \leq 1$ is undecidable already when $m=2$~\cite{Blondel2,Blondel1}. The JSR has a close connection to stability of a discrete-time \emph{switched} linear system, i.e., a dynamical system of the type $x_{k+1}=A_{k}x_k$, where the matrix $A_k \in \R^{n \times n}$ can vary arbitrarily in each iteration within the set $\mathcal{A}$. We say that a switched linear system is asymptotically stable if for any starting state $x_0 \in \mathbb{R}^n$ and any sequence of products of matrices in $\mathcal{A}$, $x_k \rightarrow 0$ as $k \rightarrow \infty$. One can show that this property holds if and only if $\rho(\mathcal{A})<1$; see, e.g.,~\cite{JungBook}. Therefore, much research has focused on providing conditions that guarantee the JSR is less than one. Theorem~\ref{thm:GEd_norms_JSR} below shows that GEs can always provide such a condition. This theorem can be seen as a direct generalization of the following classical result in linear systems theory.

\begin{theorem}[see, e.g., Theorem 8.4 in \cite{Hespanha}]
\label{thm:AAA.Jungers}
For a matrix $A \in \mathbb{R}^{n \times n}$, we have $\rho(A)<1$ if and only if there exists a contracting quadratic norm; i.e., a function $V:\mathbb{R}^n \rightarrow \mathbb{R}$ of the form $V(x)=\sqrt{x^TQx}$, with $Q\succ 0$, such that $V(Ax)<V(x)$ $\forall x\neq 0.$
\end{theorem}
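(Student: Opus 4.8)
The plan is to prove the two implications separately, in each case passing through the matrix inequality $A^TQA \prec Q$. Since $Q\succ 0$ gives $V(x)>0$ for $x\neq 0$, the contraction condition $V(Ax)<V(x)$ for all $x\neq 0$ is equivalent to $x^TA^TQAx < x^TQx$ for all $x\neq 0$, i.e., to $Q-A^TQA\succ 0$. So the theorem reduces to the claim that $\rho(A)<1$ if and only if there is $Q\succ 0$ with $Q-A^TQA\succ 0$.

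For the direction ($\Leftarrow$) I would argue on eigenvalues. Suppose $Q\succ 0$ and $Q-A^TQA\succ 0$, and let $\lambda\in\mathbb{C}$ be an eigenvalue of $A$ with eigenvector $v\neq 0$ (possibly complex). Extending the two real positive definite forms to Hermitian forms on $\mathbb{C}^n$ — recall that a real symmetric positive definite matrix induces a positive definite Hermitian form — we get $v^*Qv>0$ and $v^*(Q-A^TQA)v>0$. Using $Av=\lambda v$, the second inequality becomes $(1-|\lambda|^2)\,v^*Qv>0$, which together with $v^*Qv>0$ forces $|\lambda|<1$. Since $\lambda$ was an arbitrary eigenvalue, $\rho(A)<1$.

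For the direction ($\Rightarrow$) I would construct $Q$ explicitly via a Lyapunov-type series. Assuming $\rho(A)<1$, Gelfand's formula gives $\|A^k\|^{1/k}\to\rho(A)<1$, hence $\|A^k\|\le c\,\theta^k$ for some $c>0$, $\theta\in(0,1)$, and all $k$; so $Q\defin\sum_{k=0}^{\infty}(A^k)^TA^k$ converges to a symmetric matrix. It is positive definite because every summand is positive semidefinite and the $k=0$ term is $I$, so $Q\succeq I$. Reindexing the sum yields $A^TQA=\sum_{k=0}^{\infty}(A^{k+1})^TA^{k+1}=Q-I$, hence $Q-A^TQA=I\succ 0$. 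Thus $V(x)=\sqrt{x^TQx}$ is the desired contracting quadratic norm. (Equivalently, $Q$ is the unique solution of the discrete Lyapunov equation $A^TQA-Q=-I$.)

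The argument is essentially routine, as this is a classical fact; the only two places that need a little care are the complexification step in ($\Leftarrow$) — verifying that the real inequality $Q-A^TQA\succ 0$ still gives a strict inequality when tested against a complex eigenvector — and the geometric decay estimate on $\|A^k\|$ used to make the series defining $Q$ converge. Neither is a genuine obstacle, so I would not belabor these details.
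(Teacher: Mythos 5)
Your proof is correct. Note that the paper does not actually prove this statement---it is quoted as a classical fact with a citation to Hespanha's textbook---so there is no in-paper argument to compare against. What you have written is the standard textbook proof: the reduction of the contraction condition to the strict Lyapunov inequality $Q-A^TQA\succ 0$, the eigenvector test (with the correct observation that a real symmetric positive definite matrix remains positive definite as a Hermitian form, so $v^*Qv>0$ and $v^*(Q-A^TQA)v=(1-|\lambda|^2)v^*Qv$ for a complex eigenvector $v$), and the convergent series $Q=\sum_{k\ge 0}(A^k)^TA^k$ solving the discrete Lyapunov equation, with convergence justified by Gelfand's formula. All steps check out, and the level of detail is appropriate for a result of this type.
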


Geometrically, the above theorem implies that if a linear system is asymptotically stable, then there is an ellipsoid (given by any sublevel set of the quadratic norm) that is invariant under the trajectories, i.e., all trajectories starting in this ellipsoid remain in the ellipsoid for all time. 
It is well known, however, that for switched linear systems involving at least two matrices, existence of an invariant ellipsoid is not a necessary condition for asymptotic stability. By contrast, the following theorem implies that existence of an invariant GE is a necessary condition for asymptotic stability of switched linear systems.
The theorem is stated in the language of GE-$d$-norms (recall the definition from~\eqref{eq:GE_d_norm} in Section~\ref{sec:GE_definition}).

\begin{theorem}
\label{thm:GEd_norms_JSR}
For a set of matrices $\mathcal{A} \defin \{A_1,\dots, A_m \} \subseteq \R^{n \times n}$, we have $\rho(\mathcal{A})<1$ if and only if there exists a contracting GE-$d$-norm; i.e., 
a function $V:\mathbb{R}^n \rightarrow \mathbb{R}$ of the form $V(x)= \max_{t \in [-1,1]} \sqrt{x^TP(t)x}$, where $P(t)$ is a univariate polynomial matrix of degree $d$ satisfying the psd and the kernel conditions in Definition~\ref{def:GE_d_defn}, such that $V(A x)<V(x)$ $\forall x\neq 0$ and $\forall A \in \mathcal{A}$.
\end{theorem}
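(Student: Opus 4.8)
The plan is to establish the two implications separately: the ``if'' direction by a compactness argument, and the ``only if'' direction by first producing a contracting \emph{polytopic} norm and then recognizing it as a GE-$d$-norm via Corollary~\ref{cor:ellipsoids and polytopes}. For the ``if'' direction, suppose $V(x)=\max_{t\in[-1,1]}\sqrt{x^TP(t)x}$ is a contracting GE-$d$-norm. By Lemma~\ref{lem:norm_iff} it is a genuine norm, so $\{x\mid V(x)=1\}$ is compact. For each $A\in\mathcal{A}$ the map $x\mapsto V(Ax)$ is continuous and strictly below $1$ on this sphere, hence attains a maximum $\gamma_A<1$; put $\gamma=\max_{A\in\mathcal{A}}\gamma_A<1$. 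Then the operator norm $\|\cdot\|_V$ induced by $V$ satisfies $\|A\|_V\le\gamma$ for every $A\in\mathcal{A}$, so $\|A_{\sigma_k}\cdots A_{\sigma_1}\|_V\le\gamma^k$ for every word $\sigma$ of length $k$. Since $\rho(\mathcal{A})$ may be computed with any matrix norm, in particular with $\|\cdot\|_V$, it follows that $\rho(\mathcal{A})\le\gamma<1$.

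For the ``only if'' direction, fix $\gamma$ with $\rho(\mathcal{A})<\gamma<1$. Then $\rho(\{\gamma^{-1}A_1,\dots,\gamma^{-1}A_m\})=\rho(\mathcal{A})/\gamma<1$, and from the definition of the joint spectral radius and equivalence of matrix norms (the products $\gamma^{-|\sigma|}A_\sigma$ tend to $0$ uniformly over words of a given length) there is an integer $N\ge1$ so that, writing $A_\sigma\defin A_{\sigma_{|\sigma|}}\cdots A_{\sigma_1}$ and $A_\varnothing\defin I$, one has $\|\gamma^{-|\sigma|}A_\sigma\|_{\infty\to\infty}\le1$ whenever $|\sigma|\ge N$. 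Let $C=[-1,1]^n$ and set
\[
P \;\defin\; \mathrm{conv}\Big(\textstyle\bigcup_{0\le|\sigma|\le N-1}\gamma^{-|\sigma|}A_\sigma C\Big).
\]
Being the convex hull of finitely many affine images of the cube, $P$ is a bounded polytope; it contains $C$ (the empty-word term), so it is full-dimensional, and it is symmetric about the origin since $C$ is. Moreover $\gamma^{-1}A_iP\subseteq P$ for each $i$: expanding $\gamma^{-1}A_iP$ as the convex hull of the sets $\gamma^{-|\tau|}A_\tau C$ over words $\tau=i\sigma$ with $1\le|\tau|\le N$, the terms with $|\tau|\le N-1$ already appear among the generators of $P$, while each term with $|\tau|=N$ satisfies $\gamma^{-N}A_\tau C\subseteq C\subseteq P$ by the choice of $N$. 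Hence $A_iP\subseteq\gamma P$ for all $i$.

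Let $V(x)=\|x\|_P$ be the gauge of $P$. Since $P$ is a symmetric convex body, $V$ is a norm, and $A_iP\subseteq\gamma P$ yields $V(A_ix)\le\gamma V(x)<V(x)$ for every $x\neq0$ and every $i$. It remains to realize $V$ as a GE-$d$-norm. Since $P$ is a symmetric full-dimensional polytope we may write $P=\{x\mid |a_\ell^Tx|\le1,\ \ell=1,\dots,m'\}=\bigcap_{\ell=1}^{m'}\{x\mid x^T(a_\ell a_\ell^T)x\le1\}$, a compact intersection of $m'$ semiellipsoids. Applying the construction in the proof of Theorem~\ref{thm:semiellipsoid} with polynomials $p_1,\dots,p_{m'}$ from Lemma~\ref{lem:poly simplex}, the polynomial matrix $P(t)=\sum_{\ell=1}^{m'}p_\ell(t)\,a_\ell a_\ell^T$ has degree at most $2m'-3$, satisfies the psd and kernel conditions, and defines the GE $P$. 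Its associated GE-$d$-norm $\max_{t\in[-1,1]}\sqrt{\sum_\ell p_\ell(t)(a_\ell^Tx)^2}$ equals $\max_\ell|a_\ell^Tx|=\|x\|_P=V(x)$: the bound ``$\le$'' uses $p_\ell\ge0$ and $\sum_\ell p_\ell\equiv1$ on $[-1,1]$, and ``$\ge$'' uses that at the point $t_\ell$ where $p_\ell(t_\ell)=1$ all the other $p_j(t_\ell)$ vanish. Thus $V$ is a contracting GE-$d$-norm.

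The main obstacle is the forward direction: the ``obvious'' invariant convex body (the convex hull of the full scaled orbit of a ball) is not a polytope, so one must truncate the orbit at a depth $N$ chosen precisely so that longer products are already contractions in a fixed norm, and then verify that invariance survives the truncation. Once a contracting polytopic norm is in hand, translating it into a genuine GE-$d$-norm via Theorem~\ref{thm:semiellipsoid} is a short computation.
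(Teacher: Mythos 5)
Your proof is correct, and the ``only if'' direction takes a genuinely different route from the paper's. The paper treats this theorem largely as a corollary of other results: the ``if'' direction is delegated to Lyapunov's global stability theorem (your compactness argument is simply a direct proof of that same fact), and for the ``only if'' direction the paper either (a) invokes the nonconstructive converse Lyapunov theorem of Rota--Strang to obtain an abstract contracting norm and then approximates its unit ball by a GE-$d$ via Theorem~\ref{thm:approximation_by_GE} (one must additionally observe that a sufficiently fine approximation preserves strict contraction, which works because the abstract norm contracts by a uniform factor $\gamma<1$), or (b) deduces it from the quantitative Theorem~\ref{thm:quantitative_version}, which rests on an external result producing finitely many simultaneously contracting ellipsoids plus Corollary~\ref{cor:ellipsoids and polytopes}. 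You instead construct the invariant symmetric polytope explicitly, as the convex hull of a truncated, rescaled orbit of the cube, with the truncation depth $N$ chosen from the Gelfand-type limit so that all longer products are already $\ell_\infty$-contractions; this bypasses both the abstract converse Lyapunov theorem and the approximation step, and yields an \emph{exact} (rather than approximate) identification of the polytope gauge with a GE-$d$-norm via Theorem~\ref{thm:semiellipsoid} and Lemma~\ref{lem:poly simplex}. The trade-off: your construction is self-contained and exact, but the degree of the resulting GE is controlled only implicitly through $N$, hence through the gap $1-\rho(\mathcal{A})$, whereas the paper's route (b) gives the explicit degree bound of Theorem~\ref{thm:quantitative_version}. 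All the individual steps of your argument --- the choice of $N$, the invariance $A_iP\subseteq\gamma P$ surviving the truncation, and the verification that the GE-$d$-norm built from the partition-of-unity polynomials equals $\max_\ell|a_\ell^Tx|$ --- check out.
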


The ``if'' direction of Theorem~\ref{thm:GEd_norms_JSR} follows from Lyapunov's global stability theorem; see, e.g.,~\cite[Section 1]{PathComplete} and references therein (in the language of that paper, our GE-$d$-norm is a ``common'' or ``simultaneous'' Lyapunov function). The ``only if'' direction of Theorem~\ref{thm:GEd_norms_JSR} can be shown by first invoking a nonconstructive converse Lyapunov theorem which states that if $\rho(\mathcal{A}) < 1$, then there exists a contracting norm; see, e.g.,~\cite{RotaStrang} or~\cite[page 24]{JungBook}. This abstract norm however can be approximated arbitrarily well by a GE-$d$-norm. This is a consequence of our Theorem~\ref{thm:approximation_by_GE}, which proves that any symmetric convex body can be approximated arbitrarily well by a GE-$d$. The ``only if'' direction of Theorem~\ref{thm:GEd_norms_JSR} also follows from Theorem~\ref{thm:quantitative_version}, which provides a quantitative version of the statement. This theorem generalizes the main result of~\cite{AndoShih,BlondelNestTheys} (which corresponds to $l=1$) from ellipsoids to generalized ellipsoids.

\begin{theorem}
\label{thm:quantitative_version}
Let $\mathcal{A} \defin \{A_1,\ldots,A_m\} \subseteq \R^{n \times n}$. For a positive integer $\ell$, if $\rho(\mathcal{A}) < \frac{1}{\sqrt[2\ell]{n}}$, then there exists a contracting GE-$d$-norm with $d \leq \max \{ 2m^{\ell-1}-3,0 \}$.
\end{theorem}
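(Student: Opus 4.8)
The plan is to reduce the claim to the $\ell=1$ case of the theorem of Ando--Shih / Blondel--Nesterov--Theys (which states that $\rho(\mathcal{A})<1/\sqrt{n}$ implies the existence of a contracting ellipsoid norm, i.e. a contracting GE-$0$-norm) by passing to a lifted set of matrices. First I would recall the standard semigroup trick for the joint spectral radius: for the set of all products of length $\ell$, namely $\mathcal{A}^{\ell} \defin \{A_{\sigma_\ell}\cdots A_{\sigma_1} \mid \sigma \in \{1,\dots,m\}^\ell\}$, one has $\rho(\mathcal{A}^{\ell}) = \rho(\mathcal{A})^{\ell}$. Hence the hypothesis $\rho(\mathcal{A}) < n^{-1/(2\ell)}$ is equivalent to $\rho(\mathcal{A}^{\ell}) < n^{-1/2}$. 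Applying the $\ell=1$ result to the (finite) set $\mathcal{A}^{\ell}$, there is a matrix $Q \succ 0$ such that $\|Bx\|_Q < \|x\|_Q$ for all $B \in \mathcal{A}^{\ell}$ and all $x \neq 0$, where $\|x\|_Q = \sqrt{x^T Q x}$.

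The next step is to build a GE-$d$-norm from $Q$ that is contracting for the original set $\mathcal{A}$, not just for $\mathcal{A}^{\ell}$. The natural candidate is
$$
V(x) \defin \max_{0 \le k \le \ell-1} \ \max_{\sigma \in \{1,\dots,m\}^{k}} \ \|A_{\sigma_k}\cdots A_{\sigma_1} x\|_{Q},
$$
i.e. the maximum of $\|\cdot\|_Q$ over all images of $x$ under products of length at most $\ell-1$ (with the empty product giving $\|x\|_Q$ itself). This is a finite maximum of ellipsoid norms — concretely, $V(x) = \max_{j} \sqrt{x^T Q_j x}$ where the $Q_j \succeq 0$ range over $M_\sigma^T Q M_\sigma$ for all products $M_\sigma$ of length $\le \ell-1$ — so it is a finite intersection of semiellipsoids after taking the unit sublevel set. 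The number of such $Q_j$ is $1 + m + \cdots + m^{\ell-1} \le m^{\ell-1} \cdot \frac{m}{m-1}$; a cleaner count sufficient for the bound $d \le \max\{2m^{\ell-1}-3, 0\}$ requires grouping the products so that the effective number of distinct semiellipsoids in the intersection is at most $m^{\ell-1}$ (for $m \ge 2$; the $\ell=1$ or trivial cases give $d=0$). Then, applying Theorem~\ref{thm:semiellipsoid} to this intersection of (at most) $m^{\ell-1}$ semiellipsoids, $\{x : V(x) \le 1\}$ is a GE-$d$ with $d \le 2m^{\ell-1}-3$, and correspondingly $V$ is a GE-$d$-norm by Lemma~\ref{lem:norm_iff} (the kernel/compactness condition holds because $Q \succ 0$ already forces the sublevel set to be bounded).

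The heart of the argument is then to verify that $V$ is contracting under every $A \in \mathcal{A}$. Fix $A = A_r$ and $x \neq 0$. For any product $M_\sigma = A_{\sigma_k}\cdots A_{\sigma_1}$ of length $k \le \ell-1$, the term $\|M_\sigma A_r x\|_Q$ appearing in $V(A_r x)$ equals $\|M_{\sigma'} x\|_Q$ where $M_{\sigma'} = M_\sigma A_r$ is a product of length $k+1 \le \ell$. If $k+1 \le \ell-1$ this term already appears in $V(x)$, so it is $\le V(x)$. The only terms that could escape this comparison are those of length exactly $\ell$: for such a term write $M_{\sigma'} = B M_\tau$ where $B \in \mathcal{A}^{\ell - k'}\cdots$ — more precisely, split off the last $\ell$ factors so that $M_{\sigma'} = B$ with $B \in \mathcal{A}^{\ell}$ when $k+1 = \ell$, and use $\|Bx\|_Q \le \|M_\tau x\|_Q$ is \emph{not} what we want; instead use $\|M_{\sigma'} x\|_Q = \|B x\|_Q$ with $B\in\mathcal{A}^\ell$, and... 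The clean way: any product of length exactly $\ell$ is a single element $B$ of $\mathcal{A}^\ell$, and $\|Bx\|_Q \le \|x\|_Q \le V(x)$ by the contraction property of $Q$ on $\mathcal{A}^\ell$. Hence every term in the max defining $V(A_r x)$ is $\le V(x)$, giving $V(A_r x) \le V(x)$. To upgrade $\le$ to the strict inequality $V(A_r x) < V(x)$ for $x \neq 0$, note that the term of length exactly $\ell$ obtained by prepending the right length-$(\ell-1)$ product is strictly dominated (strictly less than $\|(\text{that length-}(\ell-1)\text{ product})x\|_Q \le V(x)$) — but one must argue the \emph{maximizing} term is among those we can strictly bound; this follows because whichever $k \le \ell-1$ achieves the max for $V(A_r x)$, the corresponding length-$(k+1)$ product either appears in $V(x)$ (giving $< V(x)$ is not automatic) ... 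I expect the strictness to be the main technical obstacle: it requires a short compactness/homogeneity argument on the unit sphere $\{V(x) = 1\}$, using that each of the finitely many linear pieces $\|M_\sigma A_r x\|_Q$ is \emph{strictly} less than $V(x)$ at every point (because either $M_\sigma A_r$ has length $\le \ell-1$ and $A_r$ being invertible-or-not, $\|M_\sigma A_r x\|_Q$ is one of the defining terms of $V(x)$ but we need it $<$, OR it has length $\ell$ and strict contraction of $Q$ on $\mathcal{A}^\ell$ applies). Cleaning up exactly which terms give strict inequality, and handling the degenerate cases $m=1$ or $\ell=1$ where $d=0$, is the portion that needs care; everything else is bookkeeping with Theorem~\ref{thm:semiellipsoid} and the submultiplicativity $\rho(\mathcal{A}^\ell)=\rho(\mathcal{A})^\ell$.
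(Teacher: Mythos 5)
Your overall strategy matches the paper's: produce finitely many positive semidefinite matrices whose associated max-of-quadratic-norms function is strictly contracting for $\mathcal{A}$, then invoke Theorem~\ref{thm:semiellipsoid} (and Lemma~\ref{lem:norm_iff}) to realize that function as a GE-$d$-norm with $d \leq 2N-3$, where $N$ is the number of quadratics. The second half of your argument is exactly what the paper does. The difference is in the first half: the paper simply cites \cite[Theorem~6.1]{PathComplete}, which directly supplies $N = m^{\ell-1}$ positive definite matrices with the required strict contraction property, whereas you attempt to rebuild that result from a common quadratic Lyapunov matrix $Q$ for the product set $\mathcal{A}^{\ell}$. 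As written, your construction has two genuine gaps, both of which you flag but neither of which you close.

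First, the count. Taking all products of length at most $\ell-1$ gives $1+m+\dots+m^{\ell-1} = \frac{m^{\ell}-1}{m-1}$ quadratics, which strictly exceeds $m^{\ell-1}$ for $m \geq 2$, so Theorem~\ref{thm:semiellipsoid} only yields $d \leq 2\frac{m^{\ell}-1}{m-1}-3$, not the claimed $2m^{\ell-1}-3$; reducing to $m^{\ell-1}$ quadratics requires indexing by words of length exactly $\ell-1$ (the De Bruijn--type construction of \cite{PathComplete}), which changes the bookkeeping of which terms dominate which, and ``grouping'' is not a one-line fix. Second, and more seriously, strict contraction genuinely fails for your candidate $V$: if the maximum in $V(A_r x)$ is attained by a word of length $k \leq \ell-2$, the corresponding term $\|M_\sigma A_r x\|_Q$ is literally one of the terms defining $V(x)$, so you only get $V(A_r x) \leq V(x)$. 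For instance, with $m=\ell=2$, at any $x$ where $\|A_1 x\|_Q > \max\{\|x\|_Q, \|A_2 x\|_Q\}$ (precisely the situation that precludes a common quadratic norm), one has $V(A_1 x) \geq \|A_1 x\|_Q = V(x)$, an exact equality that no compactness argument on the unit sphere can repair. The standard fix is a scaling trick: since $\rho(\mathcal{A}) < n^{-1/(2\ell)}$, choose $\gamma \in (0,1)$ with $\rho(\gamma^{-1}\mathcal{A}) < n^{-1/(2\ell)}$ still, run the construction for $\gamma^{-1}\mathcal{A}$ to obtain $V(\gamma^{-1}Ax) \leq V(x)$, and use homogeneity of $V$ to conclude $V(Ax) \leq \gamma V(x) < V(x)$ for $x \neq 0$. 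Without these two repairs the argument does not establish the theorem as stated; alternatively, you could simply quote \cite[Theorem~6.1]{PathComplete} as the paper does.
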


\begin{proof}
Suppose $\rho(\mathcal{A}) < \frac{1}{\sqrt[2\ell]{n}}$.
Then it follows from \cite[Theorem~6.1]{PathComplete} that there exist $m^{\ell-1}$ matrices $P_1,\dots,P_{m^{\ell-1}} \in S^n_{++}$ such that the function $W(x) = \max_{i\in \{1,\dots,m^{\ell-1} \}} \sqrt{x^T P_i x}$ satisfies $W(Ax) < W(x)$, $\forall x \neq 0$ and $\forall A \in \mathcal{A}$.
If $m=1$ or $\ell=1$, the GE-$0$-norm $V(x) = \sqrt{x^T P_1 x}$ is evidently contracting.
Now assume $m,\ell \geq 2$.
It follows from Corollary~\ref{cor:ellipsoids and polytopes} that there exists a polynomial matrix $P(t)$ of degree $d \leq 2m^{\ell-1}-3$ such that the GE-$d$-norm $V(x) = \max_{t \in [-1,1]} \sqrt{x^T P(t) x} = W(x)$ for all $x \in \R^n$.
The claim follows.
\end{proof}

\subsubsection{An example}\label{subsubsec:jsr_example}

Consider the set of matrices $\mathcal{A}_{\gamma} =\{\gamma A_1, \gamma A_2\} \subseteq \R^{2 \times 2}$ parameterized by a scalar $\gamma\geq 0$ and with 
$$A_1=\begin{bmatrix} 1 & 0 \\ 1 & 0\end{bmatrix}, \:\: A_2=\begin{bmatrix} 0 & 1 \\ 0 & -1 \end{bmatrix}.$$

These matrices that have been studied e.g. in \cite{AndoShih} to demonstrate that the JSR can be less than one without existence of a contracting quadratic norm (recall the definition from the statement of Theorem~\ref{thm:AAA.Jungers}). Indeed, one can show that $\rho(\mathcal{A_{\gamma}}) < 1$ for any $\gamma < 1$, while a contracting quadratic norm exists only when $\gamma < \frac{1}{\sqrt{2}}$. By contrast, we observe that a contracting GE-$1$-norm exists for any $\gamma < 1$.
 

Let
$$P(t) = \frac{1}{2} \begin{bmatrix} 1-t & 0 \\ 0 & 1+t\end{bmatrix}.$$
It is straightforward to verify that  $P(t) \succeq 0 \:\: \forall t \in [-1,1]$ and that $\bigcap\limits_{t \in [-1,1]} \text{Ker}(P(t)) = \{0\}$. Let
$$
V(x) \defin \max\limits_{t \in [-1,1]} \sqrt{x^TP(t)x} = \max\limits_{t \in [-1,1]} \sqrt{ \frac{1-t}{2}x_1^2+\frac{1+t}{2}x_2^2} = \max \{ |x_1|,|x_2|\}
$$
be the associated GE-$1$-norm. We have $V(\gamma A_1 x) = \gamma |x_1|$ and $V(\gamma A_2 x) = \gamma |x_2|$.
It follows that $V(\gamma A_i x) < V(x)$, $\forall \gamma < 1$, $\forall x \neq 0$, and for $i=1,2$.



\subsection{Robust-to-dynamics optimization}
\label{subsec:RDO}
A robust-to-dynamics optimization (RDO) problem is an optimization problem of the form
\begin{equation}\label{eq: rdo}
\hspace{5.3cm}
\begin{aligned}
\min_{x \in \R^n} \quad & f(x)\\
\text{s.t.}\quad & x,g(x), g(g(x)),\ldots \in \Omega,
\end{aligned}    
\end{equation}
where $f: \R^n \rightarrow \R$, $\Omega\subseteq \R^n$, and $g: \R^n \rightarrow \R^n$ is a map that represents a dynamical system $x_{k+1}=g(x_k)$ with $k=0,1,2,\ldots$ denoting the index of time. In words, the goal of the RDO problem is to optimize $f$ over the set $\mathcal{S} \subseteq \Omega$ of initial conditions that forever remain in $\Omega$ under $g$. We refer the reader to~\cite{RDO} for more context and to~\cite{ACST} for applications of this problem to safe learning.

In~\cite{RDO}, algorithms that provide tractable inner and outer approximations to the feasible set $\mathcal{S}$ of~\eqref{eq: rdo} are provided for certain subclasses of the RDO problem. A particular focus is on the case where $\Omega$ is a polyhedron and $g$ is a linear map. 
More specifically, in this setting, $\Omega =  \{ x \in \R^n \mid Hx \leq 1\}$ where $H \in \R^{m \times n}$ is a given matrix and $g(x) = Ax$ where $A \in \R^{n \times n}$ is \emph{stable}\footnote{A terminology more consistent with Section~\ref{subsec:jsr} would have been ``asymptotically stable'', but in this section we drop the word asymptotic for simplicity.}, i.e. has spectral radius less than one.
Note that any polytope with the origin in its interior can be written in the form of $\Omega$.
We refer the reader to \cite[Section~2.1.1]{RDO} (and also \cite[Proposition~16]{ACST}) to see why the assumptions that $\Omega$ contains the origin in its interior and that $A$ is stable are made. These assumptions are only slightly stronger than the natural requirement that $\mathcal{S}$ is not a measure-zero set.

In this section,  we extend this setting to the case where the matrix $A$ is unknown, but must belong to the convex hull of two given matrices $\hat{A}$ and $\check{A}$. 
The input to our problem is a matrix $H \in \R^{m \times n}$ representing the polytope $\Omega =  \{ x \in \R^n \mid Hx \leq 1\}$ and two matrices $\hat{A},\check{A} \in \R^{n \times n}$.
Given this input, we wish to characterize the set
\begin{equation}\label{eq:uncertain_RDO}
\hspace{2.2cm}
\mathcal{S} \defin \{x \in \R^n \mid HA^k x \leq 1 \quad k=0,1,\dots, \quad \forall A \in \text{conv}(\hat{A},\check{A}) \}.
\end{equation}
The approach that we present will also work in the more general setting where the matrix $A$ is only known to belong to a given polynomial curve in matrix space.
As is done in \cite{RDO}, outer approximations to $\mathcal{S}$ can be obtained by truncating the infinite time horizon, and in our case sampling from $\text{conv}(\hat{A},\check{A})$; we carry out an outer approximation of this form in the numerical example below.
As is the case in~\cite{RDO}, finding inner approximations to $\mathcal{S}$ are more challenging however.
The following theorem shows how one can inner approximate $\mathcal{S}$ with a GE via semidefinite programming.

\begin{theorem}\label{thm:rdo ge}
Let $\mathcal{S}$ be as in \eqref{eq:uncertain_RDO}. Let $A(t) \defin \frac{1+t}{2}\hat{A} + \frac{1-t}{2} \check{A}$ and $h_i^T$ be the $i^{\text{th}}$ row of $H$. If $\mathcal{E}_d$ is a GE-$d$ defined by a polynomial matrix $P(t)$ of degree $d$ which satisfies the constraints
\begin{equation}\label{eq:rdo contraints}
\begin{aligned}
\hspace{4cm}
&P(t) - A(t)^TP(t)A(t) \succeq 0 \quad \forall t \in [-1,1]\\
&P(t) \succeq h_i h_i^T \quad \forall t \in [-1,1] \quad i=1,\dots,m,
\end{aligned}
\end{equation}
then $\mathcal{E}_d \subseteq \mathcal{S}$.
\end{theorem}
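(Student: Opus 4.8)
The plan is to prove the inclusion $\mathcal{E}_d \subseteq \mathcal{S}$ directly, by tracking how the quadratic forms $x \mapsto x^T P(t) x$ behave along the relevant orbits. The first step is to record the elementary but crucial observation that as $t$ ranges over $[-1,1]$, the matrix $A(t) = \frac{1+t}{2}\hat{A} + \frac{1-t}{2}\check{A}$ sweeps out exactly the segment $\text{conv}(\hat{A},\check{A})$, with $A(-1) = \check{A}$ and $A(1) = \hat{A}$. Consequently, every $A \in \text{conv}(\hat{A},\check{A})$ equals $A(t)$ for some $t \in [-1,1]$, and it suffices to show that for every $x \in \mathcal{E}_d$, every $t \in [-1,1]$, and every $k \geq 0$, the point $A(t)^k x$ lies in $\Omega = \{z \in \R^n \mid Hz \leq 1\}$.

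Next, fix $x \in \mathcal{E}_d$ and $t \in [-1,1]$. By Definition~\ref{def:GE_d_defn}, membership of $x$ in $\mathcal{E}_d$ gives $x^T P(t) x \leq 1$. The first constraint in \eqref{eq:rdo contraints} states that $P(t) \succeq A(t)^T P(t) A(t)$, i.e., $(A(t)z)^T P(t)(A(t)z) \leq z^T P(t) z$ for all $z \in \R^n$. A one-line induction on $k$ (base case $k=0$ trivial, inductive step applying the previous inequality with $z = A(t)^k x$) then yields $(A(t)^k x)^T P(t)(A(t)^k x) \leq x^T P(t) x \leq 1$ for every $k \geq 0$.

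Finally, I would invoke the second family of constraints $P(t) \succeq h_i h_i^T$: applying this with $z = A(t)^k x$ gives $(h_i^T A(t)^k x)^2 = z^T(h_i h_i^T) z \leq z^T P(t) z \leq 1$, hence $h_i^T A(t)^k x \leq 1$ for each $i = 1,\dots,m$, that is, $H A(t)^k x \leq 1$. Since $t \in [-1,1]$ and $k \geq 0$ were arbitrary and every $A \in \text{conv}(\hat{A},\check{A})$ is of the form $A(t)$, this shows $x \in \mathcal{S}$, completing the argument.

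I do not expect a genuine obstacle here beyond bookkeeping; the only point needing (minor) care is that the contraction inequality in \eqref{eq:rdo contraints} uses the \emph{same} parameter $t$ on both sides, which is precisely what makes the induction on powers of the fixed matrix $A(t)$ valid. If one instead ran the dynamics with a time-varying (switching) choice of matrix, this argument would fail — but $\mathcal{S}$ as defined in \eqref{eq:uncertain_RDO} is the non-switching feasible set, which is exactly what the hypothesis is tailored to.
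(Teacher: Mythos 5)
Your proposal is correct and follows essentially the same route as the paper's proof: fix $t$, use the first constraint to show by induction that the sublevel set $\{z \mid z^T P(t) z \leq 1\}$ is invariant under $A(t)$, then use the second constraint to conclude $H A(t)^k x \leq 1$, and finish by noting $\{A(t) \mid t \in [-1,1]\} = \text{conv}(\hat{A},\check{A})$. No gaps.
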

\begin{proof}
For each $t \in [-1,1]$, define $\mathcal{E}_d(t) \defin \{x \in \R^n \mid x^T P(t) x \leq 1\}$.
By the first constraint in \eqref{eq:rdo contraints}, we have $x \in \mathcal{E}_d(t) \Rightarrow A(t)x \in \mathcal{E}_d(t)$.
From this, it follows that $x \in \mathcal{E}_d(t) \Rightarrow A^k(t) x \in \mathcal{E}_d(t)$ for all $k\geq 0$.
By the second constraint in \eqref{eq:rdo contraints}, we have $x \in \mathcal{E}_d(t) \Rightarrow Hx \leq 1$.
Then we have
$$
x \in \mathcal{E}_d = \bigcap_{t \in [-1,1]}\mathcal{E}_d(t) \Rightarrow HA^k(t) x \leq 1 \quad \forall t \in[-1,1] \quad k=0,1,\dots,
$$
which gives the desired result since the set $\{A(t) \mid t \in [-1,1]\} = \text{conv}(\hat{A},\check{A})$.
\end{proof}

We note that if $\Omega$ is compact and if any matrix in $\text{conv}(\hat{A},\check{A})$ has spectral radius more than one, then the set $\mathcal{S}$ in \eqref{eq:uncertain_RDO} will have measure zero \cite[Proposition~16]{ACST}.
To avoid this situation, similarly to what is done in \cite{RDO}, we work with the assumption that all matrices in $\text{conv}(\hat{A},\check{A})$ are stable. Under this assumption, the following lemma ensures that there is always a suitable polynomial matrix $P(t)$ which satisfies the constraints of~\eqref{eq:rdo contraints}. The second constraint in~\eqref{eq:rdo contraints} is not mentioned in this lemma since it can always be satisfied simply by scaling up the matrix $P(t)$.
In the language of dynamical systems, this lemma states that if all matrices in the convex hull are stable, then there must exist a polynomially-varying quadratic Lyapunov function $x^TP(t)x$ for the associated linear dynamical systems.


\begin{lemma}[Special case of Lemma~24 of \cite{ACST}]\label{lem: safelearning stable}
For two matrices $\hat{A},\check{A} \in \R^{n \times n}$,
every matrix in the set $\text{conv}(\hat{A},\check{A})$ is stable if and only if there exists a polynomial matrix $P:\R \rightarrow S^{n}$ such that
\begin{enumerate}
    \item $P(t) \succ 0 \quad \forall t \in [-1,1]$,
    \item $P(t) - A(t)^T  P(t) A(t) \succ 0 \quad \forall t \in [-1,1]$,
\end{enumerate}
where $A(t) \defin \frac{1+t}{2}\hat{A} + \frac{1-t}{2} \check{A}$.
\end{lemma}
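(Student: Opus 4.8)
The plan is to prove the two implications separately, the ``if'' direction being immediate from the classical theory and the ``only if'' direction requiring a construction followed by a soft perturbation argument.

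\emph{The ``if'' direction.} Suppose a polynomial matrix $P(t)$ satisfies conditions 1 and 2. The affine map $t\mapsto A(t)$ sends $[-1,1]$ onto $\text{conv}(\hat A,\check A)$, so every matrix $A$ in this convex hull equals $A(t_0)$ for some $t_0\in[-1,1]$. Evaluating conditions 1 and 2 at $t=t_0$ shows that $Q\defin P(t_0)\succ 0$ and $Q-A^TQA\succ 0$, i.e.\ $V(x)=\sqrt{x^TQx}$ is a contracting quadratic norm for the map $x\mapsto Ax$. Theorem~\ref{thm:AAA.Jungers} then gives $\rho(A)<1$, and since $A$ was arbitrary, every matrix in $\text{conv}(\hat A,\check A)$ is stable.

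\emph{The ``only if'' direction.} Suppose every $A(t)$ with $t\in[-1,1]$ is stable. Since the spectral radius is continuous on $\R^{n\times n}$ and $[-1,1]$ is compact, $\bar\rho\defin\max_{t\in[-1,1]}\rho(A(t))<1$. I would take $P(t)$ to be the solution of the discrete Lyapunov equation with right-hand side $I$, namely
$$P(t)=\sum_{k=0}^{\infty}\bigl(A(t)^T\bigr)^k A(t)^k .$$
A compactness argument gives uniform convergence on $[-1,1]$: applying $\rho(A(s))=\lim_k\|A(s)^k\|^{1/k}$ pointwise, picking for each $s$ an exponent $N_s$ with $\|A(s)^{N_s}\|<1$, and covering $[-1,1]$ by finitely many neighborhoods on which this persists, one obtains an integer $N$ and $\lambda<1$ with $\|A(t)^N\|\le\lambda$ for all $t\in[-1,1]$; hence $\|A(t)^k\|$ decays geometrically, uniformly in $t$. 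Consequently $t\mapsto P(t)$ is continuous on $[-1,1]$, each $P(t)\succeq I$, and $P(t)-A(t)^TP(t)A(t)=I$ for every $t\in[-1,1]$.

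It remains to replace this continuous $P(t)$ by a polynomial one. Both $P(t)$ and $P(t)-A(t)^TP(t)A(t)$ dominate $I$ on $[-1,1]$, a uniform margin, and since $\sup_{t\in[-1,1]}\|A(t)\|<\infty$ the linear map $M\mapsto M-A(t)^TMA(t)$ has norm bounded independently of $t$. Hence, applying the Weierstrass approximation theorem to each entry of $P(t)$, there is a polynomial matrix $\tilde P(t)$ with $\sup_{t\in[-1,1]}\|\tilde P(t)-P(t)\|$ small enough that $\tilde P(t)\succ 0$ and $\tilde P(t)-A(t)^T\tilde P(t)A(t)\succ 0$ for all $t\in[-1,1]$, which is exactly what is required. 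The one genuinely delicate point is the uniform convergence of the Lyapunov series: pointwise stability of the $A(t)$ does not by itself produce a geometric bound on $\|A(t)^k\|$ that is uniform in $t$, so the covering argument over $[-1,1]$ is essential; once $P(t)$ is known to be continuous with a uniform positive-definiteness margin, the passage to a polynomial matrix is routine because every inequality involved is strict.
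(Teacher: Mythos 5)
The paper does not actually prove this lemma: it is imported as a special case of Lemma~24 of \cite{ACST}, so there is no in-paper argument to compare yours against. Judged on its own, your proof is correct and self-contained. The ``if'' direction is exactly the pointwise Lyapunov argument (evaluating at $t_0$ and invoking Theorem~\ref{thm:AAA.Jungers}, using that $V(Ax)<V(x)$ for all $x\neq 0$ is equivalent to $Q-A^TQA\succ 0$). For the ``only if'' direction, your choice $P(t)=\sum_{k\ge 0}(A(t)^T)^kA(t)^k$ solves $P(t)-A(t)^TP(t)A(t)=I$ with $P(t)\succeq I$, and you correctly identify the only delicate point: pointwise $\rho(A(t))<1$ does not give a $t$-uniform geometric bound on $\|A(t)^k\|$, so the compactness/covering argument is genuinely needed before one can conclude uniform convergence and hence continuity of $P$. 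Two details are elided but easily patched: in the covering step you need a \emph{common} exponent, e.g.\ $N=\prod_i N_{s_i}$ so that $\|A(t)^N\|\le\lambda_{s_i}^{N/N_{s_i}}<1$ on each patch; and the entrywise Weierstrass approximant should be symmetrized (replace $\tilde P$ by $(\tilde P+\tilde P^T)/2$, which changes nothing in the estimates) so that $\tilde P:\R\to S^n$ as the statement requires. With those two remarks your perturbation step is airtight, since both target inequalities hold with the uniform margin $I$ and the map $M\mapsto M-A(t)^TMA(t)$ has operator norm bounded by $1+\max_{t\in[-1,1]}\|A(t)\|^2$.
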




We have just shown that the following optimization problem
\begin{equation}\label{eq: rdo radius}
\hspace{3.2cm}
\begin{aligned}
\min_{P \in S_d^n[t],\: \gamma \in \R} \quad & \gamma \\
\text{s.t.} \quad & P(t) \preceq \gamma I  \quad \forall t \in [-1,1] \\
&P(t) \succeq 0 \quad \forall t \in [-1,1]\\
&P(t) - A(t)^TP(t)A(t) \succeq 0 \quad \forall t \in [-1,1]\\
&P(t) \succeq h_i h_i^T \quad \forall t \in [-1,1] \quad i=1,\dots,m,
\end{aligned}
\end{equation}
where $A(t) \defin \frac{1+t}{2}\hat{A} + \frac{1-t}{2} \check{A}$,
is feasible for a polynomial matrix $P(t)$ of sufficiently large degree.
Note that~\eqref{eq: rdo radius} is a semidefinite program; see Section~\ref{subsec:efficient_search}.
The GE-$d$ associated with the polynomial matrix $P(t)$ is an inner approximation to the set $\mathcal{S}$ defined in \eqref{eq:uncertain_RDO}. The objective and the first constraint in this SDP are maximizing the radius of a ball contained in this GE-$d$.

\subsubsection{Numerical example}\label{sec: rdo example}
As an example, we seek to characterize the set $\mathcal{S}$ as defined in \eqref{eq:uncertain_RDO} with the following input:
$$
H = \begin{bmatrix}
    -1 & 0\\1 & 0\\0&-1\\0&1
\end{bmatrix},\quad \hat{A} = \begin{bmatrix}
       -0.9  &  0.6 \\
   -1.6  &  1.1
\end{bmatrix},\quad \check{A} = \begin{bmatrix}
    1.1  &  0.6\\
   -1.6  & -0.9
\end{bmatrix}.
$$
We solve the SDP in \eqref{eq: rdo radius} to find a polynomial matrix $P(t)$ of degree $d$.
For $d=0$, the problem is infeasible.
For $d=1,2$, the problem is feasible and we denote the GEs defined by the optimal solutions by $\mathcal{E}_1,\mathcal{E}_2$, respectively.
In Figure~\ref{fig:rdo}, we plot the sets $\Omega,\mathcal{E}_1,\mathcal{E}_2$, as well as the set
$$
S_{10} \defin \left \{x \in \R^n \mid HA^k(t) x \leq 1 \quad \forall t \in \left\{ -1,-\frac{9}{10},\dots,\frac{9}{10},1 \right\} \quad k=0,1,\dots 10 \right \},
$$
which is clearly an outer approximation of the set $\mathcal{S}$.
Since $\mathcal{S}$ must be sandwiched between the sets $\mathcal{E}_2$ and $S_{10}$, we can conclude that $\mathcal{E}_2$ provides a good inner approximation to $\mathcal{S}$.
\begin{figure}[ht]
    \centering
    \includegraphics[width = .5\textwidth]{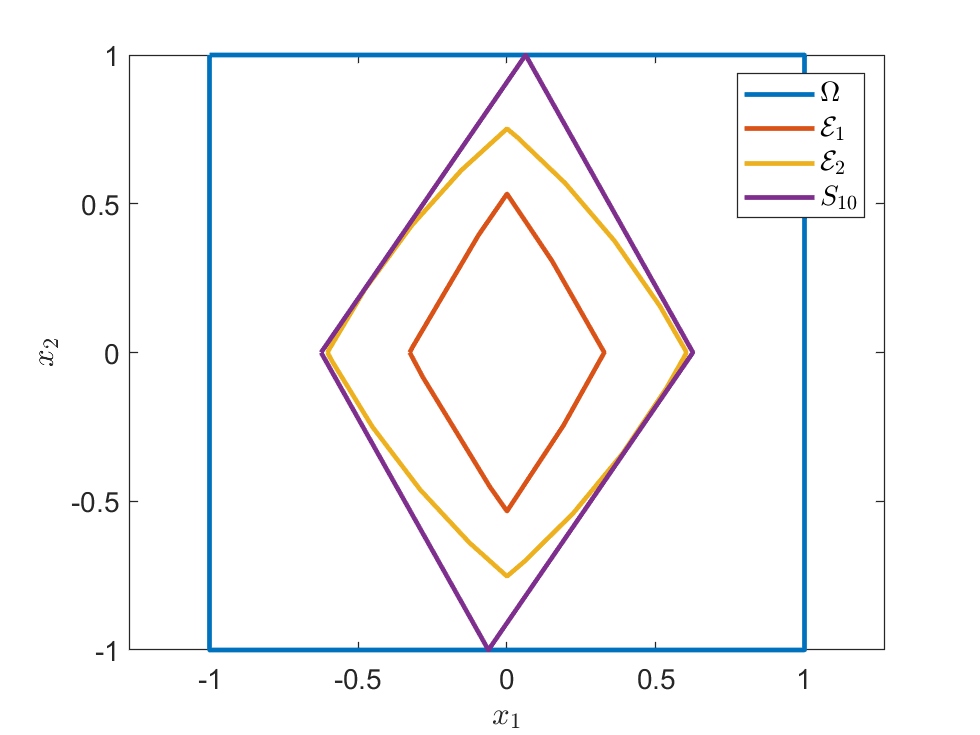}
    \caption{The sets associated with the numerical example in Section~\ref{sec: rdo example}. The GE-2 denoted by $\mathcal{E}_2$ provides a good inner approximation of the set $\mathcal{S}$ defined in \eqref{eq:uncertain_RDO}.}
    \label{fig:rdo}
\end{figure}


\subsection{Polynomial regression robust to a shift}\label{subsec:shift regression}

A classic task in statistics is to fit a polynomial function $p: \R \rightarrow \R$ to observations $(x_i,y_i)$ for $i=1,\dots,m$. A standard approach to find a polynomial fit of degree (at most) $d$ is that of least-squares polynomial regression, which solves the problem
\begin{equation}\label{eq: least squares}
\hspace{5.5cm}
\min_{c \in \R^{d+1}} \left\| \Phi(x)c - y \right\|^2,
\end{equation}
where $c \in \R^{d+1}$ is the vector of coefficients of $p$, the vectors $x,y \in \R^m$ have their $i^{\text{th}}$ entry equal to $x_i,y_i$, respectively, and $\Phi: \R^m \rightarrow \R^{m \times (d+1)}$ is the polynomial matrix with the $i^{\text{th}}$ row of $\Phi(z)$ equal to $(1,z_i,z_i^2,\dots,z_i^d)$.
In some applications, due to metrological limitations, one may have some error in measuring the points $x_i$.
In particular, one may wish to find a function which provides a good fit to the observations even if the points $x_i$ were slightly shifted.
In this section, we describe how GEs arise when solving this problem.

More concretely, to find a polynomial that fits the observations well even if the points $x_i$ are shifted by up to $\varepsilon$ units to the left or right, one can write
\begin{equation}\label{eq: robust regression}
\hspace{5cm}
\min_{c \in \R^{d+1}} \max_{t \in [-1,1]} \left\| \Phi(x+\varepsilon t)c - y \right\|^2.
\end{equation}
This problem can be reformulated as
\begin{equation}
\hspace{3.5cm}
\begin{aligned}
\min_{c \in \R^{d+1}, \gamma \in \R} \quad &\gamma\\
\text{s.t.} \quad & \begin{bmatrix}
    c \\ 1
\end{bmatrix}^T P(t) \begin{bmatrix}
    c \\ 1
\end{bmatrix} \leq \gamma \quad \forall t\in [-1,1],
\end{aligned}
\end{equation}
for $$P(t) = \begin{bmatrix}
    \Phi(x+\varepsilon t)^T\Phi(x+\varepsilon t) & -\Phi(x+\varepsilon t)^T y \\ -y^T\Phi(x+\varepsilon t) & y^Ty
\end{bmatrix}.$$
Note that $P(t) \succeq 0$ for all $t \in [-1,1]$ and it is straightforward to check that $P(t)$ satisfies the kernel condition under the mild assumption that there are a pair of observations $(x_i,y_i)$ and $(x_j,y_j)$ in the dataset such that $x_i\neq x_j$ and $y_i \neq y_j$.
Therefore, problem \eqref{eq: robust regression} corresponds to finding a coefficient vector $c$ such that the appended vector $[c,1]^T$ is minimal with respect to the GE-$2d$-norm defined by $P(t)$ (see~\eqref{eq:GE_d_norm} in Section~\ref{sec:GE_definition}).
By Theorem~\ref{thm:sdp_representable}, this problem can be reformulated as an SDP.

\subsubsection{Numerical example}\label{sec:regression_example}
For our experiment, we take $m=10$, the points $x_i$ to be uniformly spaced between $-1$ and $1$, and $y_i = f(x_i)$ where $f$ is the Runge function $f(x) \defin \frac{1}{1+25x^2}$.
We fit a degree-$9$ polynomial to these observations both with the standard least-squares approach and with the shift-robust approach.
We take the shift tolerance $\varepsilon$ to be equal to $0.05$.
In Figure~\ref{fig:shift regression}, we plot the observations $(x_i,y_i)$ as well as $(x_i \pm 0.05, y_i)$.
We also plot the polynomials corresponding to the solutions of \eqref{eq: least squares} and \eqref{eq: robust regression}, labelled as $p_{LS}$ and $p_{GE}$, respectively.
We can calculate the worst-case errors for these polynomials:
\begin{align*}
\hspace{4.1cm}
\max_{t \in [-1,1]} \left\| p_{LS}(x+0.05 t) - y \right\|^2 &= 0.8086,\\
\max_{t \in [-1,1]} \left\| p_{GE}(x+0.05 t) - y \right\|^2 &= 0.0447.
\end{align*}
Here we see that $p_{GE}$ is significantly more robust to a small shift of the points and is overall much smoother than $p_{LS}$.
\begin{figure}[ht]
\centering
\includegraphics[width=.5\textwidth]{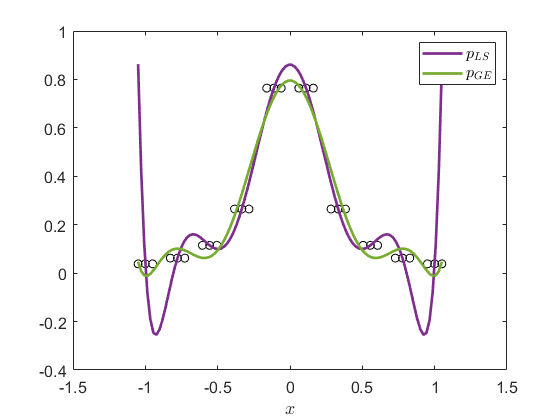}
\caption{Observations and fitted polynomials associated with the numerical example in Section~\ref{sec:regression_example}.}
\label{fig:shift regression}
\end{figure}


\section{Future Research Directions}\label{sec:future_directions}
We conclude with a few questions for future research.
Our first two questions concern extensions of some results from Section~\ref{sec:rep_power}.

\begin{itemize}
\item We showed that every symmetric convex body can be approximated arbitrarily well by a GE. Our approximation factor relies on results on polytopic approximation of convex bodies. A related question is: how well can one approximate an $n$-dimensional symmetric convex body with a finite intersection of $m$ (co-centered) ellipsoids, with $m$ growing potentially with~$n$? Since we have shown that a GE-$d$, with $d=2m-3$, can exactly represent an intersection of $m$ co-centered ellipsoids (see~Corollary~\ref{cor:ellipsoids and polytopes} and~Theorem~\ref{thm:semiellipsoid}), progress on this question can potentially improve our approximation factor in Theorem~\ref{thm:approximation_by_GE}.




\item Let us call a set $\Omega \subseteq \R^n$ \emph{GE-$d$-representable} if for some nonnegative integers $k,m$, a GE-$d$ $\mathcal{E}_d \subset \R^m$, some matrices $A \in \R^{m \times n}$ and $B \in \R^{m \times k}$, and some vector $b \in \R^m$, one can write
$$
\Omega = \{x \in \R^n \mid \exists u \in \R^k \text{ s.t. } Ax + Bu + b \in \mathcal{E}_d \}.
$$
We say that a set is \emph{GE-representable} if it is GE-$d$-representable for some nonnegative integer $d$.
It would be interesting to study the expressiveness of GE-representable sets; in particular, do GE-representable sets fall in between SOCP and SDP-representable sets?

\end{itemize}



\noindent We also highlight some results (among many) about ellipsoids which we believe might be interesting to extend to generalized ellipsoids.

\begin{itemize}

\item Motivated by problems in subspace identification and factor analysis, the ellipsoid fitting conjecture~\cite{SCPW12, SPW13} concerns the maximum number of independent standard Gaussian vectors in $\R^n$ such that with high probability, there exists an ellipsoid (i.e., a GE-$0$), passing through them. Recently, great progress has been made on this problem which resolves the conjecture up to a constant~\cite{TulWu,BMMP,HKPX}.
How does this maximum number change when one replaces a GE-$0$ with a GE-$d$ for a fixed value of $d$?

\item In \cite{NRT}, it is shown that the standard SDP relaxation for the (nonconvex) problem of maximizing an arbitrary homogeneous quadratic function over the intersection of $m$ ellipsoids provides an approximation ratio of $\frac{1}{2\log (2m^2)}$. Can one derive a similar result for maximization of quadratic functions over a GE-$d$ and obtain an approximation ratio in terms of $d$? Note that one cannot directly apply the result of \cite{NRT} since for $d \geq 2$, a GE-$d$ can be the intersection of an infinite number of ellipsoids.

\end{itemize}

\noindent Finally, it would be interesting to study generalizations of GEs defined by a matrix function which is not necessarily polynomial. For example, one could study rational functions:

\begin{itemize}
\item If we consider sets of the form  $\Omega = \{x \in \R^2 \mid x^T R(t) x \leq 1 \:\: \forall t \in [-1,1]\},$ where the matrix $R(t)$ is positive semidefinite over $[-1,1]$ and has entries which are \emph{rational functions}, then such sets would be generalizations of GEs. One can extend our proof of Theorem~\ref{thm:sdp_representable} to show that these sets are still SDP-representable. Indeed, for $R(t)$ to be well-defined over $[-1,1]$, the entries of this matrix should have no poles in this domain. Moreover, after possible removal of common terms between the numerators and denominators, we can assume (without loss of generality) that all denominators are positive definite over this domain. 
Hence, after taking a common denominator, one can write $R(t) = \frac{P(t)}{q(t)}$, where the polynomial $q(t)$ is positive over $[-1,1]$.
Hence,
$$
\Omega = \{x \in \R^2 \mid x^T P(t) x \leq q(t) \:\: \forall t \in [-1,1]\}.
$$
Since $P(t)$ is positive semidefinite over $t \in [-1,1]$, by a similar argument as in the proof of Theorem~\ref{thm:sdp_representable}, it follows that
$$
\Omega = \left \{ x \in \R^n \:\: \middle | \:\: \exists X \in S^n \text{ s.t. } \begin{bmatrix} X & x \\ x^T & 1\end{bmatrix} \succeq 0, \quad \text{Tr}(X P(t)) \leq q(t) \:\:\: \forall t \in [-1,1] \right\}.
$$
In view of Proposition~\ref{prop:sdp_transformation}, this shows that $\Omega$ is SDP-representable. It would be interesting to quantify how well convex bodies can be approximated by such a generalization of GEs.


\end{itemize}


\section*{Acknowledgements}
We are grateful to Pablo Parrilo for insightful discussions around Lemma~\ref{lem:explicit_Nash}. We also extend our gratitude to two anonymous referees and the Associate Editor for many insightful comments and questions.












\end{document}